\newtheorem*{thm*}{Theorem}
\theoremstyle{plain}
\newtheorem{Theorem}{\bf Theorem}[section]
\newtheorem{Lemma}[Theorem]{\bf Lemma}
\newtheorem{Proposition}[Theorem]{\bf Proposition}
\newtheorem{Corollary}[Theorem]{\bf Corollary}
\newtheorem{Remark}[Theorem]{\bf Remark}
\newtheorem{Example}[Theorem]{\bf Example}
\newtheorem{Definition}[Theorem]{\bf Definition}
\newenvironment{theorem}{\begin{Theorem}$\!\!\!$}{\end{Theorem}}
\newenvironment{lemma}{\begin{Lemma}$\!\!\!$}{\end{Lemma}}
\newenvironment{proposition}{\begin{Proposition}$\!\!\!$}{\end{Proposition}}
\newenvironment{corollary}{\begin{Corollary}$\!\!\!$}{\end{Corollary}}
\newenvironment{remark}{\begin{Remark}$\!\!\!$}{\end{Remark}}
\newenvironment{definition}{\begin{Definition}$\!\!\!$}{\end{Definition}}
\numberwithin{equation}{section}
\newcommand{\R}{\mathbb{R}}
\newcommand{\di}{\mathop{}\!\mathrm{d}}
\def\XXint#1#2#3{{\setbox0=\hbox{$#1{#2#3}{\int}$}
\vcenter{\hbox{$#2#3$}}\kern-.5\wd0}}
\begin{document}
	\title[Blow-up rate for the H\'{e}non parabolic equation]
	{On blow-up rate for the H\'{e}non parabolic equation with Sobolev supercritical nonlinearity}
	
	
	\author{Kotaro Hisa}
	\address[K. Hisa]{Department of Applied Mathematics, Faculty of Science, Fukuoka University, 8-19-1 Nanakuma, Jonan-ku, Fukuoka-shi, Fukuoka 814-0180, Japan}
	\email{hisak@fukuoka-u.ac.jp}
    \thanks{$^\ast$Corresponding author: Kotaro Hisa.}

	\author{Yukihiro Seki}
	\address[Y. Seki]{Department of Mathematical Sciences, Graduate School of Science,Tokyo metropolitan university, 1-1 Minami-Osawa, Hachioji-shi, Tokyo 192-0397, Japan}
	\email{yseki@tmu.ac.jp}

	\subjclass[2020]{Primary 35B44; 35C20; Secondary 35K58.}
	
	\keywords{Semilinear heat equation, blow-up, H\'{e}non parabolic equation, blow-up at zero point, supercritical.}
	
	\date{\today}
	

	\begin{abstract}
We discuss the H\'{e}non parabolic equation
$\partial_t u = \Delta u + |x|^\sigma u^p$
in a finite ball in $\mathbb{R}^N$ under the Dirichlet boundary condition, where $N\ge1$, $p>1$, and $\sigma>0$.
We assume that the exponent $p$ is supercritical in the Sobolev sense.
Since the spatial potential term $|x|^\sigma$  vanishes at the origin, solutions seem less likely to blow up at the origin.
We construct a solution that blows up at the origin
and also carry out an analysis of blow-up rate of solutions.
In particular, if $p$ is less than the Joseph--Lundgren exponent, all blow-ups are shown to be of Type I. 
The lower bound corresponding to Type I rate is also shown for some particular blow-up solutions. 
As by products, we present a basic result on 
classification to threshold solutions for every $p>1+\sigma/N$. 
	\end{abstract}
	\maketitle



\section{Introduction}
\subsection{The H\'{e}non parabolic equation}
This paper is concerned with 
the Cauchy--Dirichlet problem for the H\'{e}non  parabolic equation
\begin{equation}
\label{eq:Henon}
\left\{
\begin{array}{ll}
	\partial_t u  = \Delta u + |x|^{\sigma}u^p, \quad  &x\in B_R,\,\,\, t\in (0,T), \vspace{3pt}\\
	u = 0, \quad  &x\in  \partial B_R,\,\,\, t\in (0,T), \vspace{3pt}\\
        u(x,0) = u_0(x), \quad  &x\in B_R, \vspace{3pt}\\
\end{array}
\right.
\end{equation}
where $p>1$, $\sigma >0$, $T>0$, and 
$u_0 \in C(\overline{B_R})$ is a nonnegative and radially symmetric  function in $\overline{B_R}$
with $u_0 = 0$ on $\partial B_R$.
Here, $ B_R := \{x\in \mathbb{R}^N; |x|<R\}$ for $N\ge3$ and $R>0$.
%
%
In the following, $u(x,t;u_0)$ and $T(u_0)$ will denote the solution of problem~\eqref{eq:Henon} with initial data $u_0$ and the maximal existence time of solution $u(x,t;u_0)$, respectively, 
but the initial data $u_0$ may be omitted if no confusion arises.
The solvability for problem~\eqref{eq:Henon} has been well-studied in recent years (see {\it e.g.},~\cites{CFL25,  CIT21, CIT22, CITT24, GW24, TW23, Yomgne22} and references cited therein).
In this paper, we study 
the blow-up phenomenon of nonnegative radially symmetric solutions of problem~\eqref{eq:Henon} and 
 assume always that all solutions $u$ blows up in  finite time $T>0$, that is,
\[
\lim_{t\nearrow T} \|u(\cdot,t)\|_{L^\infty(B_R)} = \infty.
\]
We write $B(u_0)$ for the blow-up set of the solution $u(x,t,u_0)$:
\begin{equation*}
\begin{split}
B(u_0) := &\{x \in \overline{B_R}; \exists (x_j, t_j) \in B_R \times(0,T) \,\, \mbox{such that}\\
&\qquad \qquad \qquad \qquad x_j \to x, \,\, t_j \to T \,\, \mbox{and} \,\, |u(x_j,t_j;u_0)|\to \infty\}.
\end{split}
\end{equation*}
A point in $B(u_0)$ is called a blow-up point of $u(x,t;u_0)$.
Let $p_{\rm S}(\sigma)$ and $p_{\rm JL}(\sigma)$ denote the Sobolev critical exponent and the Joseph--Lundgren exponent for the equation of \eqref{eq:Henon}, respectively, that is,
\begin{equation*}
p_{\rm S} (\sigma) := \left\{
\begin{array}{ll}
	\infty \quad  &\mbox{if} \quad N=1,2, \vspace{3pt}\\
	\displaystyle{\frac{N+2+2\sigma}{N-2}} \quad  &\mbox{if} \quad N>2, \vspace{3pt}\\
\end{array}
\right.
\end{equation*}
and
\begin{equation*}
p_{\rm JL} (\sigma) := \left\{
\begin{array}{ll}
	\infty \quad  &\mbox{if} \quad N\le 10 +4\sigma, \vspace{3pt}\\
	\displaystyle{1+ \frac{2(2+\sigma)}{N-4-\sigma-\sqrt{(2N-2+\sigma)(2+\sigma)}}} \quad  &\mbox{if} \quad N>10+4\sigma, \vspace{3pt}\\
\end{array}
\right.
\end{equation*}
for $\sigma >0$.

The blow-up phenomenon is caused by the effect of the nonlinearity $f(x,u) = |x|^\sigma u^p$ in the equation of \eqref{eq:Henon}, which depends on spatial variables.
The further away from the origin, the stronger the effect becomes, whereas at the origin, the effect disappears completely.
Therefore, it seems unlikely that the solution will blow up at the origin
and
it is natural to ask whether the origin can be a blow-up point. 
This problem has been discussed in \cites{GS11,GLS10,GS18}, in which sufficient conditions on initial data were established to ensure that the origin is not a blow-up point. 
Guo, Lin, and Shimojo \cite{GLS10} proved that the origin cannot be a blow-up point of radially symmetric solutions when  
$N\geq 3$ and $\sigma >(p-1)(N-2)/2$ (the equal case is also admitted if $N=3$).
In particular, if a solution is monotone in $t$, 
then the origin cannot be a blow-up point (\cites{GS11,GS18}).
On the other hand, 
counterexamples are known only for the special cases $N=3$, $p>5+2\sigma$ in \cite{GS11} and $N\geq 11, p>p_{\mathrm{JL}}(\sigma)$ 
in \cite{MS21}.
One of our aims is to systematically show  that counterexamples, {\it i.e.}, solutions blowing up at the origin, actually exist
for every $N\geq 3$ and $p_{\mathrm{S}}(\sigma)<p<p_{\mathrm{JL}}(\sigma)$.

%

Another aim is to clarify  the relation between blow-up at the origin and blow-up rate.
The blow-up phenomenon is divided into two types:
In this paper, a blow-up is called of Type I if there exists a constant $C>0$ such that
\begin{equation}
\label{eq:TypeI}
\|u(\cdot,t)\|_{L^\infty(B_R)} \le C (T-t)^{-\frac{2+\sigma}{2(p-1)}},
\end{equation}
for every $t\in (0,T)$;
otherwise it is called of Type II.

\begin{Remark}
Notice that our definition of Type I blow-up for problem~\eqref{eq:Henon} may be different from that of \cite{GLS10}. 
In \cite{GLS10}, a blow-up is called of Type I if there is a constant $C>0$ such that 
\begin{equation}
\label{eq:TypeI*}
\|u(\cdot,t)\|_{L^\infty(B_R)} \le C (T-t)^{-\frac{1}{p-1}},
\end{equation}
for every $t\in (0,T)$ instead of \eqref{eq:TypeI}. 
This definition is consistent with the case 
where $0$ is not a blow-up point, since 
the factor $|x|^{\sigma}$ does not degenerate (i.e., behaves like a positive constant)  
near any blow-up point when the blow-up time is approached.
This does not apply to the case where blow-up at the origin occurs. 
\end{Remark}
The study of blow-up rates of solutions is one of the most important areas in research on nonlinear differential equations and
much effort has been devoted to this field over the past decades. 
For instance, when $\sigma=0$ and $p>p_{\rm S}(0)$ with $N\geq 3$, the analysis of blow-up rate 
has been discussed in several articles. We refer to \cites{HV94,M11a,M11b,MM04,MM09,
M07,S18,S20,C17,DMW21} on this direction.
See also a monograph \cite{QS19}, which includes an extensive list of references for semilinear heat equations.
Among them, Matano and Merle \cite{MM04} showed that in the case of $\sigma =0$,
any nonnegative radially symmetric solution of problem~\eqref{eq:Henon} must exhibit Type I blow-up. 
As for the case $\sigma>0$, Phan \cite{Phan17} showed that the blow-up of any radially symmetric solution is of Type I  if $p<p_{\rm S}(\sigma)$. 
It is noteworthy that the upper blow-up rate may not be optimal 
as stated therein. In the Sobolev supercritical case $p> p_{\rm S}(\sigma)$, the only known results
on blow-up rate with $0$ being blow-up points 
are the existence of Type II blow-up solutions with exact rates due to \cite{MS21} for $p>p_{\rm JL}(\sigma)$, $N \geq 10 +4\sigma$, 
and backward self-similar (Type I) solutions obtained in \cite{FT00} for $p_{\rm S}(\sigma) < p < p_{\rm JL}(\sigma)$. 
Notice that the latter examples are, in principle, meaningful only in the whole space case.
In this paper, we show that in the case of $\sigma>0$, any nonnegative radially symmetric solution of problem~\eqref{eq:Henon} must exhibit Type I blow-up  under the assumption $p_{\rm S}(\sigma) < p < p_{\rm JL}(\sigma)$.


\subsection{Main results}
We are now ready to state our main results.
In Theorem~\ref{Theorem:A} we establish  the existence of  a nonnegative radially symmetric solution which blows up in  finite time and at the origin.

\begin{theorem}
\label{Theorem:A}
Assume $N\ge 3$, $p_{\rm S}(\sigma)<p< p_{\rm JL}(\sigma)$, and $\sigma>0$.
Then there exists a nonnegative radially symmetric function $u_0^*\in C(\overline{B_R}) \cap H^1_0(B_R)$ such that the corresponding solution $u^*(x,t)= u(x,t;u_0^*)$ of  problem~\eqref{eq:Henon} blows up in  finite time and $B(u_0^*)=\{0\}$.
\end{theorem}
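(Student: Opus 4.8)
The plan is to construct $u_0^{\ast}$ so that its solution is trapped, for all times before its blow-up, between a singular stationary supersolution and a subsolution that blows up in finite time at the origin. Set $\beta:=\tfrac{2+\sigma}{p-1}$ and $L:=\bigl(\beta(N-2-\beta)\bigr)^{1/(p-1)}$; since $N\ge3$ and $p>p_{\rm S}(\sigma)$ we have $0<\beta<\tfrac{N-2}{2}$, so $N-2-\beta>0$, $L$ is well defined, and $\psi_{\rm s}(x):=L|x|^{-\beta}$ is a singular positive stationary solution of the equation in~\eqref{eq:Henon} lying in $H^1_{\rm loc}$. Since $\psi_{\rm s}$ is stationary and $\psi_{\rm s}=LR^{-\beta}>0$ on $\partial B_R$, the comparison principle --- applied on $B_R\setminus B_{\varepsilon}$, where the datum on $\{|x|=\varepsilon\}$ is dominated by $L\varepsilon^{-\beta}\to\infty$ while $u$ stays bounded up to any time $<T(u_0)$, and then letting $\varepsilon\downarrow0$ --- shows that every admissible $u_0$ with $0\le u_0\le\psi_{\rm s}$ satisfies $u(\cdot,t;u_0)\le\psi_{\rm s}$ on $(B_R\setminus\{0\})\times(0,T(u_0))$. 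In particular $u(\cdot,t;u_0)$ stays bounded on $(B_R\setminus B_{\varepsilon})\times(0,T(u_0))$ for every $\varepsilon>0$, so $B(u_0)\subseteq\{0\}$; and if in addition $T(u_0)<\infty$ then $B(u_0)\ne\emptyset$, whence $B(u_0)=\{0\}$. Thus the whole problem reduces to exhibiting one admissible $u_0^{\ast}$ with $u_0^{\ast}\le\psi_{\rm s}$ whose solution is \emph{not} global.

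To produce such $u_0^{\ast}$ I would use the backward self-similar solutions available precisely in the range $p_{\rm S}(\sigma)<p<p_{\rm JL}(\sigma)$. Let $U(x,t):=(T-t)^{-\beta/2}w\bigl(x(T-t)^{-1/2}\bigr)$, where $w>0$ is the radial bounded self-similar profile on $\mathbb{R}^N$ furnished by~\cite{FT00} and $T>0$ is a free parameter; then $U$ solves the equation in~\eqref{eq:Henon} on $\mathbb{R}^N\times(-\infty,T)$ and $U(0,t)\to\infty$ as $t\nearrow T$. From $U$ one builds a subsolution $\underline u$ of problem~\eqref{eq:Henon} on $B_R\times(0,T)$ that is nonnegative, radial, compactly supported inside $B_R$ (hence $\equiv0$ on $\partial B_R$) for $T$ small, and still satisfies $\|\underline u(\cdot,t)\|_{L^\infty(B_R)}\to\infty$ as $t\nearrow T$; the natural construction is a truncation of $U$ in the self-similar variable combined with the subtraction of a stationary piece, the scaling identity $\beta(p-1)=2+\sigma$ making all the relevant error terms homogeneous in $T-t$. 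Once the truncation is arranged so that $\underline u(\cdot,0)\le\psi_{\rm s}$ in $B_R$ --- which is automatic near the origin, where $\underline u(\cdot,0)$ is bounded while $\psi_{\rm s}$ is singular --- one sets $u_0^{\ast}:=\underline u(\cdot,0)$ (or any admissible function squeezed between $\underline u(\cdot,0)$ and $\psi_{\rm s}$): it belongs to $C(\overline{B_R})\cap H^1_0(B_R)$, comparison from below gives $u(\cdot,t;u_0^{\ast})\ge\underline u$ and hence $T(u_0^{\ast})\le T<\infty$, comparison from above gives $u(\cdot,t;u_0^{\ast})\le\psi_{\rm s}$, and by the first paragraph $B(u_0^{\ast})=\{0\}$.

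The main obstacle is exactly the construction and compatibility of the subsolution: one must simultaneously make the truncated self-similar function a genuine (weak) subsolution of the parabolic problem vanishing on $\partial B_R$ \emph{and} keep it below the singular profile $\psi_{\rm s}=L|x|^{-\beta}$. The delicate zone is $|x|\sim\sqrt{T}$, where the truncated datum and $\psi_{\rm s}$ are both comparable to a constant multiple of $T^{-\beta/2}$; controlling the sign of the truncation error there, and near the maximum point of $w$ (where $|y|^{\sigma}w^{p-1}\gtrsim\beta$), is where the hypothesis $p<p_{\rm JL}(\sigma)$ is essential, since it is exactly in this subrange that~\cite{FT00} provides a self-similar profile with the structure needed for the truncation to remain subordinate to $\psi_{\rm s}$. (An alternative, elliptic route avoids self-similar solutions altogether: one seeks a bounded nonnegative time-independent subsolution $v\le\psi_{\rm s}$ of the elliptic H\'enon equation vanishing on $\partial B_R$; then $u(\cdot,t;v)$ is nondecreasing in $t$ and, by the confinement of the first paragraph together with the Pohozaev-type nonexistence of positive equilibria for $p\ge p_{\rm S}(\sigma)$, cannot converge to a bounded steady state, so it grows unboundedly. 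The residual difficulty along this route is to exclude grow-up, i.e.\ blow-up in infinite time, and this is again where $p<p_{\rm JL}(\sigma)$ --- the instability of the singular solution --- is used.) Finally, $u_0^{\ast}\in C(\overline{B_R})\cap H^1_0(B_R)$ is immediate from the explicit form of the truncated profile, and radial symmetry is preserved throughout, as $w$, $\psi_{\rm s}$ and all cut-offs are radial.
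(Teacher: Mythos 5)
Your first paragraph is a correct and rather clean reduction: comparison on annuli $B_R\setminus B_\varepsilon$ against the exact singular steady state $U_\infty(x)=c_\infty|x|^{-(2+\sigma)/(p-1)}$ (your $\psi_{\rm s}$) does show that any solution with $0\le u_0\le U_\infty$ stays below $U_\infty$, hence is bounded away from the origin, so finite-time blow-up would force $B(u_0)=\{0\}$. This is genuinely different from the paper, which localizes the blow-up set of a \emph{threshold} solution via the universal singularity estimate of Proposition~\ref{Proposition:SDE} rather than via a barrier. The problem is that the reduction transfers the entire content of the theorem into the unproved claim that \emph{some} admissible $u_0\le U_\infty$ has $T(u_0)<\infty$, and neither of your two routes closes that claim.

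Concretely: (i) the barrier constant is rigid --- $A|x|^{-\beta}$ is a supersolution only for $A\le c_\infty$ --- so you must squeeze blow-up data under $U_\infty$ with the sharp constant; (ii) a truncation of the backward self-similar solution of \cite{FT00} is not automatically a subsolution: multiplying by a cut-off produces sign-indefinite gradient terms, while $(U-\phi)_+$ is a subsolution only if $\Delta\phi\le -|x|^\sigma\bigl[U^p-(U-\phi)^p\bigr]$, which is strictly stronger than $\phi$ being an elliptic supersolution, and no verifiable choice of $\phi$ is given; (iii) the required ordering $\underline u(\cdot,0)\le U_\infty$ in the zone $|x|\sim\sqrt T$ amounts to $\varphi_m(y)\le c_\infty y^{-(2+\sigma)/(p-1)}$ for all $y>0$, which is precisely the type of pointwise ordering one should \emph{not} expect in the range $p_{\rm S}(\sigma)<p<p_{\rm JL}(\sigma)$, where by Proposition~\ref{Proposition:SS} regular and singular steady states intersect infinitely often; (iv) in the elliptic variant, excluding grow-up is exactly the hardest step of the paper's own argument (Proposition~\ref{Proposition:TSblowsup}, a two-stage intersection-number comparison against $U_\infty$ and the self-similar solutions $\Phi_m$) and cannot be dismissed in one line, and moreover a compactly supported stationary subsolution must be large enough on its support to dominate $-\Delta$ there while remaining below the bounded value of $U_\infty$ on that support --- a conflict of fixed constants with no scaling parameter left to exploit. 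So what you have is a plausible program rather than a proof; the paper's threshold construction sidesteps all of these difficulties because it never requires the blow-up data to be dominated by the singular steady state.
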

%
%

In Theorem~\ref{Theorem:B}, we give a sufficient condition for nonnegative solutions not to blow up at the origin.

\begin{theorem}
\label{Theorem:B}
Assume $N\ge 1$, $p>1+\sigma/N$, and $\sigma>0$.
Suppose that a nonnegative solution $u$ of problem~\eqref{eq:Henon} blows up at $t=T\in(0,\infty)$.
If
there exists $r\in(0,R)$ such that
\begin{equation}
\label{eq:Theorem:B}
\limsup_{t \nearrow T} (T-t)^{\frac{1}{p-1}} \||\cdot|^\frac{\sigma}{p-1}u(\cdot, t)\|_{L^\infty(B_r)} =0,
\end{equation}
then $0$ is not a blow-up point.
\end{theorem}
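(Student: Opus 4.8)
The plan is to argue by contradiction: assuming $0$ is a blow-up point, we will derive a lower estimate for $\||\cdot|^{\sigma/(p-1)}u(\cdot,t)\|_{L^\infty(B_r)}$ of order $(T-t)^{-1/(p-1)}$, contradicting~\eqref{eq:Theorem:B}. First I would introduce the rescaled variable that is natural for the Hénon nonlinearity: set $w(x,t) := (T-t)^{1/(p-1)}|x|^{\sigma/(p-1)}u(x,t)$, or work directly with the self-similar change of variables centered at the origin, $u(x,t) = (T-t)^{-\frac{2+\sigma}{2(p-1)}} v(y,s)$ with $y = x/\sqrt{T-t}$ and $s = -\log(T-t)$, so that $v$ solves $\partial_s v = \Delta v - \tfrac{1}{2}y\cdot\nabla v - \tfrac{2+\sigma}{2(p-1)}v + |y|^\sigma v^p$. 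Under this scaling, hypothesis~\eqref{eq:Theorem:B} translates into $\||y|^{\sigma/(p-1)} v(\cdot,s)\|_{L^\infty(B_{r e^{s/2}})} \to 0$ as $s\to\infty$; in particular, on any fixed ball $|y|\le \rho$ and for any $\delta>0$ we have $|y|^\sigma v(y,s)^p \le \delta^{p-1} \rho^{-\sigma}\cdot |y|^\sigma v(y,s)$ for $s$ large, i.e.\ the nonlinear term is dominated by a small multiple of the zeroth-order term near the origin.

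The next step is to exploit this to show that the rescaled solution decays to zero uniformly near $y=0$, which will contradict $0 \in B(u_0)$. The equation for $v$ is (up to the $|y|^\sigma v^p$ term) the linear self-similar operator $\mathcal{L}v = \Delta v - \tfrac12 y\cdot\nabla v$, whose spectrum in the weighted space $L^2_\rho(\mathbb{R}^N)$ with $\rho(y) = e^{-|y|^2/4}$ is discrete with eigenvalues $-k/2$, $k\ge 0$; after subtracting $\tfrac{2+\sigma}{2(p-1)} > 0$ the operator $\mathcal{L} - \tfrac{2+\sigma}{2(p-1)}$ has strictly negative leading eigenvalue precisely because $p > 1 + \sigma/N$ forces $\tfrac{2+\sigma}{2(p-1)} > \tfrac{N}{2}\cdot\tfrac{1}{\,\cdot\,}$... more to the point, one checks $\tfrac{2+\sigma}{2(p-1)}$ is positive, and the constant mode has eigenvalue $-\tfrac{2+\sigma}{2(p-1)} < 0$. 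Using~\eqref{eq:Theorem:B} to absorb the nonlinear term as a small perturbation of this dissipative linear flow (a Gronwall/energy argument in $L^2_\rho$ or a comparison argument with the self-similar heat kernel), I would conclude $\|v(\cdot,s)\|_{L^\infty(|y|\le\rho)} \to 0$, which translates back to $\|u(\cdot,t)\|_{L^\infty(|x|\le\rho\sqrt{T-t})} = o\big((T-t)^{-\frac{2+\sigma}{2(p-1)}}\big)$; combined with parabolic regularity this says $u$ stays bounded on a backward space-time neighborhood of $(0,T)$, so $0\notin B(u_0)$.

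An alternative, perhaps cleaner, route avoids the spectral picture: construct an explicit self-similar \emph{supersolution} of the linear problem $\partial_t z = \Delta z$ on a shrinking parabola $\{|x|\le \rho\sqrt{T-t}\}$ with the right scaling behavior, add the tiny nonlinear contribution coming from~\eqref{eq:Theorem:B}, and apply the comparison principle on the region $\{|x|< r_0,\ 0<t<T\}$ with boundary data controlled by the (necessarily finite, by standard parabolic theory away from blow-up points on $\partial B_{r_0}$) values of $u$ on the lateral boundary $|x| = r_0$. The key input is that~\eqref{eq:Theorem:B} makes $|x|^\sigma u^{p-1}$ smaller than any prescribed $\varepsilon/(T-t)$ near the origin, so the nonlinearity can be treated as a subcritical linear potential $|x|^\sigma u^{p-1}\cdot u$ with coefficient $o\big(\tfrac{1}{T-t}\big)$, which a Duhamel/iteration scheme handles.

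The main obstacle I anticipate is the interplay between the two length scales: hypothesis~\eqref{eq:Theorem:B} controls $u$ on the \emph{fixed} ball $B_r$, while the self-similar analysis naturally lives on the \emph{shrinking} ball $|x|\lesssim\sqrt{T-t}$ — one must either localize the self-similar argument (cut-off functions, with error terms that are exponentially small in $s$ thanks to the Gaussian weight) or, in the comparison approach, carefully build a barrier that interpolates between parabolic self-similar decay near the origin and the bounded behavior on $\partial B_{r_0}$. A secondary technical point is that~\eqref{eq:Theorem:B} is stated with the weight $|\cdot|^{\sigma/(p-1)}$ rather than $|\cdot|^\sigma$, so to bound the nonlinear term $|x|^\sigma u^p = \big(|x|^{\sigma/(p-1)}u\big)^{p-1}\cdot |x|^{\sigma/(p-1)}\cdot u\cdot |x|^{\sigma/(p-1)}$... one should regroup as $|x|^\sigma u^p = \big(|x|^{\sigma/(p-1)}u\big)^{p}\cdot|x|^{\sigma - \frac{p\sigma}{p-1}} = \big(|x|^{\sigma/(p-1)}u\big)^{p}\,|x|^{-\frac{\sigma}{p-1}}$, so that the vanishing of $|x|^{\sigma/(p-1)}u$ indeed kills the nonlinearity away from $x=0$, but near $x=0$ the factor $|x|^{-\sigma/(p-1)}$ blows up and one needs the ``$u(0,t)$ finite until close to $T$'' structure, i.e.\ one genuinely uses that this is a statement about the blow-up point and not a global bound. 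Handling that singular weight at the origin honestly is where the real work lies.
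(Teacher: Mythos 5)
Your second (``alternative'') route is, in outline, the paper's actual proof, which follows \cite{GSU07}: localize with a cut-off $\phi$, write the Duhamel formula for $\phi u$, use \eqref{eq:Theorem:B} to treat $|x|^\sigma u^{p-1}$ as a potential bounded by $\varepsilon^{p-1}/(T-s)$, run a Gronwall-type argument to obtain $\||\cdot|^{\sigma/(p-1)}u(\cdot,t)\|_{L^\infty}\le C(T-t)^{-\varepsilon/(p-1)}$, and bootstrap finitely many times to boundedness. However, your proposal stops exactly at the point you yourself flag as ``where the real work lies,'' and that step is the actual content of the proof, so as written there is a gap. The missing ingredient is the convolution estimate $\int_{\R^N}G(x-y,t)|y|^{-k}\,\di y\le C|x|^{-k}$ for $0<k<N$ (Lemma~\ref{Lemma:HI21}), applied with $k=\sigma/(p-1)$; this is admissible precisely because $p>1+\sigma/N$ gives $\sigma/(p-1)<N$, and it is what allows each Duhamel term, after the regrouping $|y|^\sigma u^p=\bigl(|y|^{\sigma/(p-1)}u\bigr)^{p}\,|y|^{-\sigma/(p-1)}$ that you correctly identified, to be bounded by $C|x|^{-\sigma/(p-1)}$ times weighted sup-norms, so that multiplying through by $|x|^{\sigma/(p-1)}$ closes the integral inequality for $\mathcal{W}(x,t)=|x|^{\sigma/(p-1)}\phi(x)u(x,t)$. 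Without this (or an equivalent device) the singular factor at the origin defeats the iteration; note that this is also the only place where the standing hypothesis $p>1+\sigma/N$ is actually used.

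Separately, the final step of your first (self-similar/spectral) route is a non sequitur: knowing $\|v(\cdot,s)\|_{L^\infty(|y|\le\rho)}\to0$ only yields $u=o\bigl((T-t)^{-\frac{2+\sigma}{2(p-1)}}\bigr)$ on the shrinking parabola $|x|\lesssim\sqrt{T-t}$, and ``combined with parabolic regularity'' does not upgrade a $o(\cdot)$ rate to actual boundedness --- that upgrade is precisely the nondegeneracy statement being proved (for $\sigma=0$ it is the Giga--Kohn no-blow-up criterion, whose proof is itself the Duhamel iteration of your second route). There is also a localization issue you only partly acknowledge: the weighted-$L^2$ energy over $B_{re^{s/2}}$ sees the region $|x|>r$, where \eqref{eq:Theorem:B} gives no control and where $u$ may well blow up, so the nonlinearity cannot be absorbed globally without first cutting off inside $B_r$. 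The paper avoids the self-similar framework entirely and works directly with the fixed-scale Duhamel representation.
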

In Theorem~\ref{Theorem:C}, we obtain lower estimates of  blow-up rate of the solution $u^*$
constructed in Theorem~\ref{Theorem:A}.
\begin{theorem}
\label{Theorem:C}
Assume  $N\ge 3$, $p_{\rm S}(\sigma)<p <p_{\rm JL}(\sigma)$, and $\sigma>0$.
Let $u^*$ be the nonnegative radially symmetric solution of problem~\eqref{eq:Henon} constructed in Theorem~{\rm \ref{Theorem:A}}, which blows up at $t=T\in (0,\infty)$.
Then there exists a constant $C>0$ such that 
\begin{equation}
\label{eq:Theorem:C1}
C(T-t)^{-\frac{2+\sigma}{2(p-1)}} \le \|u^* (\cdot,t)\|_{L^\infty(B_R)} \quad \mbox{for} \quad 0<t<T.
\end{equation}
\end{theorem}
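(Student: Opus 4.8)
The plan is to read off the lower estimate \eqref{eq:Theorem:C1} from the barrier that forces $u^*$ to blow up at the origin in the construction behind Theorem~\ref{Theorem:A}, after first reducing to times near the blow-up time. Set $\alpha:=\tfrac{2+\sigma}{2(p-1)}$ and $\beta:=2\alpha=\tfrac{2+\sigma}{p-1}$, and recall the singular steady state $\Psi(x):=L|x|^{-\beta}$ with $L^{p-1}=\beta(N-2-\beta)$; since $p>p_{\mathrm S}(\sigma)$ one has $\beta<\tfrac{N-2}{2}$, so $\Psi\in H^1_0(B_R)$ and $\Psi$ is a singular stationary solution of the equation in \eqref{eq:Henon}, which is the natural object governing the self-similar rate $(T-t)^{-\alpha}$. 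Reduction: on any interval $(0,t_0]$ with $t_0<T$ the function $t\mapsto\|u^*(\cdot,t)\|_{L^\infty(B_R)}$ is continuous and strictly positive while $(T-t)^{-\alpha}$ is continuous and bounded there, so \eqref{eq:Theorem:C1} holds on $(0,t_0]$ after shrinking $C$; hence it suffices to establish the bound for $t\in[t_0,T)$ with a single $t_0<T$ close to $T$.

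For the interval $[t_0,T)$ I would use the self-similar lower barrier of Type~I rate that is available from the construction in Theorem~\ref{Theorem:A}. Concretely, one seeks a function of the form $\underline u(x,t):=(T-t)^{-\alpha}\phi\!\left(|x|/\sqrt{T-t}\right)$ with $\phi=\phi(\eta)\ge0$ radial, bounded, $\phi\not\equiv0$, and compactly supported in $\{\eta\le\eta_0\}$ (or decaying fast as $\eta\to\infty$), satisfying the subsolution differential inequality
\begin{equation*}
\phi''+\frac{N-1}{\eta}\,\phi'-\frac{\eta}{2}\,\phi'-\alpha\phi+\eta^{\sigma}\phi^{p}\ \ge\ 0,\qquad \eta>0,
\end{equation*}
which is exactly equivalent to $\partial_t\underline u-\Delta\underline u-|x|^{\sigma}\underline u^{p}\le0$ after the change of variables $\tau=T-t$, $\eta=|x|\tau^{-1/2}$. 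Because the weight $|x|^{\sigma}$ is continuous and bounded on $\overline{B_R}$, the comparison principle applies on $B_R$ with no difficulty at the origin; since moreover $\underline u$ vanishes on $\partial B_R$ for $t\ge t_0$ (as $R/\sqrt{T-t}\ge\eta_0$ there) and $u^*=0$ on $\partial B_R$, the initial ordering $u^*(\cdot,t_0)\ge\underline u(\cdot,t_0)$ supplied by the construction propagates to $u^*(x,t)\ge\underline u(x,t)$ on $B_R\times[t_0,T)$. Taking the supremum over $x\in B_R$ and using that $\phi$ attains its maximum inside $\{\eta\le\eta_0\}\subset\{|x|<R\}$ gives $\|u^*(\cdot,t)\|_{L^\infty(B_R)}\ge(T-t)^{-\alpha}\|\phi\|_{L^\infty}$ for $t\in[t_0,T)$, which is \eqref{eq:Theorem:C1} on $[t_0,T)$; combined with the reduction this proves the theorem. (Equivalently, if one prefers to argue through $\Psi$: the construction provides a constant $c>0$ and $\rho\in(0,R)$ with $u^*(x,t)\ge c\,\Psi(x)$ for $K\sqrt{T-t}\le|x|\le\rho$ and $t\in[t_0,T)$, and choosing $|x|=K\sqrt{T-t}$ yields $\|u^*(\cdot,t)\|_{L^\infty(B_R)}\ge cLK^{-\beta}(T-t)^{-\alpha}$.)

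The main obstacle is to make the second step rigorous for the particular solution $u^*$: (i) producing a profile $\phi$ that solves the above differential inequality with support and amplitude admissible for the comparison — this is a shooting/truncation argument for the associated second-order ODE, where the sign of the zeroth-order coefficient is favourable precisely because $\alpha>\tfrac{1}{p-1}$ — and (ii) verifying the initial ordering $u^*(\cdot,t_0)\ge\underline u(\cdot,t_0)$ at some $t_0<T$. Step (ii) is the delicate point: since $\underline u$ blows up at the \emph{same} instant $T$ and grows strictly faster than the universal a priori lower bound $c(T-t)^{-1/(p-1)}$ for $\|u^*(\cdot,t)\|_{L^\infty(B_R)}$, one cannot obtain (ii) from soft arguments; it must be extracted from the quantitative control on $u^*$ near the origin that is built during the proof of Theorem~\ref{Theorem:A} (its closeness, near $t=T$, to a backward self-similar profile, or the lower bound $u^*\ge c\,\Psi$ outside a parabolically shrinking ball). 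Once that control is granted, one simply chooses $\|\phi\|_{L^\infty}$ small enough to sit under it at $t_0$, and the remainder of the argument is routine; the resulting constant $C$ in \eqref{eq:Theorem:C1} is not expected to be sharp.
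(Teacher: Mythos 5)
Your proposal has a genuine gap at the step you yourself flag as delicate: the initial ordering $u^*(\cdot,t_0)\ge\underline u(\cdot,t_0)$. You assert that this ordering (or, in the parenthetical variant, a bound $u^*\ge c\,\Psi$ outside a parabolically shrinking ball) is ``supplied by'' or ``built during'' the construction in Theorem~\ref{Theorem:A}. It is not. The solution $u^*$ is a threshold solution, obtained as the monotone limit $\mu\nearrow\mu^*$ of global solutions $u(\cdot,\cdot;\mu g)$; the only quantitative information the construction yields is the \emph{upper} a priori bound of Proposition~\ref{Proposition:SDE}, and the proof that $u^*$ blows up at the origin (Proposition~\ref{Proposition:TSblowsup}) proceeds by intersection-number arguments against $U_\infty$ and $\Phi_m$, which give no pointwise lower bound near $x=0$. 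Moreover, this ordering cannot be recovered by rescaling the profile: replacing $\phi$ by $\varepsilon\phi$ destroys the subsolution inequality, since at an interior maximum one needs $\eta^\sigma\phi^{p-1}\ge\alpha$ there, so $\|\phi\|_{L^\infty}$ is bounded \emph{below} and $\underline u(\cdot,t_0)$ has size of order $(T-t_0)^{-\alpha}$ on a parabolic ball. Verifying the ordering therefore already requires a lower bound on $u^*$ of exactly the strength the theorem asserts; as written, the argument is circular.

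The paper avoids barriers entirely. It first proves, for \emph{every} nonnegative solution, the universal lower bound $\||\cdot|^{\sigma/(p-1)}u(\cdot,t)\|_{L^\infty(B_R)}\ge C(T-t)^{-1/(p-1)}$ by a Duhamel--Gronwall argument combined with Lemma~\ref{Lemma:HI21} (Proposition~\ref{Proposition:lowerU}). Then, letting $X(t)$ maximize $|x|^{\sigma/(p-1)}u^*(x,t)$, the upper estimate of Proposition~\ref{Proposition:SDE} gives $|X(t)|^{(2+\sigma)/(p-1)}u^*(X(t),t)\le C$, hence $|X(t)|\le C\sqrt{T-t}$, and therefore $C(T-t)^{-1/(p-1)}\le |X(t)|^{\sigma/(p-1)}u^*(X(t),t)\le C(T-t)^{\sigma/(2(p-1))}\|u^*(\cdot,t)\|_{L^\infty(B_R)}$, which is \eqref{eq:Theorem:C1}. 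If you want to salvage your approach, you would need to first establish something equivalent to these two ingredients, at which point the barrier becomes unnecessary.
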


It should be noticed that the lower estimate \eqref{eq:Theorem:C1} does not hold for general nonnegative radially symmetric solutions $u$ of problem~\eqref{eq:Henon}.
Indeed, it follows from \cite{GLS10} that if $0 \not\in B(u_0)$, then \eqref{eq:TypeI*} holds and therefore quantity 
$\|u(\cdot, t)\|_{L^\infty(B_R)}$ grows much more slowly than $ (T-t)^{-{(2+\sigma)/2(p-1)}}$ as $t\nearrow T$.
This phenomenon is naturally understood, because the potential term $|x|^\sigma$ can be essentially regarded as a constant near the blow-up point $x\neq0$.

In Theorem~\ref{Theorem:D}, we obtain upper and lower estimates of blow-up rate of any nonnegative radially symmetric solution of problem~\eqref{eq:Henon}.
Note that this theorem applies not only to solutions constructed in Theorem~\ref{Theorem:A}, but also to general nonnegative radially symmetric solutions of problem~\eqref{eq:Henon}.
\begin{theorem}
\label{Theorem:D}
Assume $N\ge 3$, $p_{\rm S}(\sigma) < p <p_{\rm JL}(\sigma)$, and  $\sigma>0$.
Let $u$ be a nonnegative radially symmetric solution of problem~\eqref{eq:Henon} which blows up at $t=T\in (0,\infty)$.
Then there exist  constants $C, C'>0$ such that 
\begin{equation}
\label{eq:Theorem:D1}
\|u(\cdot,t)\|_{L^\infty(B_R)} \le C (T-t)^{-\frac{2+\sigma}{2(p-1)}}\quad \mbox{for} \quad 0<t<T
\end{equation}
and
\begin{equation}
\label{eq:Theorem:D2}
C (T-t)^{-\frac{1}{p-1}} \le \||\cdot|^\frac{\sigma}{p-1}u(\cdot,t)\|_{L^\infty(B_R)} \le C' (T-t)^{-\frac{1}{p-1}} \quad \mbox{for} \quad 0<t<T.
\end{equation}
In particular, when $1+\sigma/N < p < p_{\rm S}(\sigma)$, \eqref{eq:Theorem:D1} and \eqref{eq:Theorem:D2} are valid for any nonnegative solution of problem~\eqref{eq:Henon}.
\end{theorem}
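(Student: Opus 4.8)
The plan is to adapt the Giga--Kohn and Matano--Merle \cite{MM04} analysis to the weighted equation, working in similarity variables centred at the origin. Setting $\alpha:=\frac{2+\sigma}{2(p-1)}$, $s:=-\log(T-t)$ and
$$
w(y,s):=(T-t)^{\alpha}\,u\big((T-t)^{1/2}y,\,t\big),\qquad x=(T-t)^{1/2}y,
$$
one computes that $w$ solves $\partial_s w=\Delta w-\tfrac12\,y\cdot\nabla w-\alpha w+|y|^\sigma w^p$ in the expanding ball $D_s:=B_{Re^{s/2}}$ with $w=0$ on $\partial D_s$, and that $|y|^{\sigma/(p-1)}w(y,s)=(T-t)^{1/(p-1)}|x|^{\sigma/(p-1)}u(x,t)$. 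Hence \eqref{eq:Theorem:D1} is equivalent to $\sup_s\|w(\cdot,s)\|_{L^\infty(D_s)}<\infty$, and \eqref{eq:Theorem:D2} to the two-sided bound $0<\inf_s\||\cdot|^{\sigma/(p-1)}w(\cdot,s)\|_{L^\infty(D_s)}\le\sup_s\||\cdot|^{\sigma/(p-1)}w(\cdot,s)\|_{L^\infty(D_s)}<\infty$. Near a blow-up point $a\neq0$ of $u$ the potential $|x|^\sigma$ is a positive bounded smooth function, so the $\sigma=0$ theory localised there already yields the standard rate; the whole difficulty is therefore concentrated in the behaviour of $w$ near $y=0$.

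Next I would record the weighted Lyapunov functional
$$
E[w(s)]:=\int_{D_s}\Big(\tfrac12|\nabla w|^2+\tfrac{\alpha}{2}w^2-\tfrac1{p+1}|y|^\sigma|w|^{p+1}\Big)\rho\,dy,\qquad \rho(y):=e^{-|y|^2/4}.
$$
Since $D_s$ grows while $\rho$ decays, the boundary contributions to $\frac{d}{ds}E[w(s)]$ are exponentially small, so $E$ is essentially nonincreasing and $\int_{s_0}^\infty\!\int_{D_s}(\partial_s w)^2\rho\,dy\,ds<\infty$. Feeding this into the identity $\frac{d}{ds}\tfrac12\int w^2\rho\,dy=-2E[w]+\tfrac{p-1}{p+1}\int|y|^\sigma w^{p+1}\rho\,dy$ together with the weighted H\"older inequality $\int w^2\rho\,dy\le C\big(\int|y|^\sigma w^{p+1}\rho\,dy\big)^{2/(p+1)}$ — the constant being finite because $\int|y|^{-2\sigma/(p-1)}\rho\,dy<\infty$, which follows from $p>p_{\rm S}(\sigma)>1+2\sigma/N$ — one finds that $\inf_s E[w(s)]<0$ would force $\int w^2\rho\,dy$ to blow up at a finite $s$, contradicting global-in-$s$ existence of $w$; hence $E[w(s)]\ge0$ for all $s$ and $E$ is uniformly bounded. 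The lower bound in \eqref{eq:Theorem:D2} follows separately from a standard ODE comparison, which rules out $\||\cdot|^{\sigma/(p-1)}w(\cdot,s)\|_{L^\infty}\to0$ for a blowing-up solution.

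The core is the uniform $L^\infty$ bound for $w$ near $y=0$, which rules out Type II and hence gives \eqref{eq:Theorem:D1}; here Sobolev supercriticality defeats purely energetic arguments and forces intersection-comparison against the singular stationary solution of the similarity equation, $\varphi^*(y)=L|y|^{-\gamma}$ with $\gamma:=2\alpha=\frac{2+\sigma}{p-1}$ and $L^{p-1}=\gamma(N-2-\gamma)>0$ (positivity holding because $p>p_{\rm S}(\sigma)$). For radial $w$ let $Z(s)$ be the number of sign changes of $r\mapsto w(r,s)-\varphi^*(r)$ on $(0,Re^{s/2})$; by Sturmian zero-number theory $Z(s)$ is finite and nonincreasing. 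The hypothesis $p<p_{\rm JL}(\sigma)$ is exactly the threshold below which the linearisation of the similarity equation about $\varphi^*$ has complex indicial exponents at the origin, so a rescaled limit of $w$ that fails to be globally bounded must, near $y=0$, lie above $\varphi^*$ and oscillate across it, producing arbitrarily many sign changes and contradicting the monotonicity of $Z$. Making this rigorous — extracting from a putative Type II sequence a limiting self-similar profile, comparing it with $\varphi^*$, and deriving the intersection contradiction, while carefully handling the Dirichlet boundary and the dichotomy $0\in B(u_0)$ versus $0\notin B(u_0)$ — is the main obstacle. Once $\sup_s\|w(\cdot,s)\|_{L^\infty(D_s)}<\infty$ is established, \eqref{eq:Theorem:D1} follows; and the upper bound in \eqref{eq:Theorem:D2} is then immediate on $\{|y|\le1\}$ (where $|y|^{\sigma/(p-1)}$ is bounded) and, on $\{|y|\ge1\}$, reduces to the sharp blow-up rate away from the origin, where $|x|^\sigma$ is bounded and bounded below, so that after localisation the standard theory applies; cf.\ \cite{GLS10}.

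Finally, when $1+\sigma/N<p<p_{\rm S}(\sigma)$, the Type I bound \eqref{eq:Theorem:D1} for arbitrary nonnegative solutions comes from the subcritical theory, in which Sobolev embedding replaces the intersection-comparison step (cf.\ \cite{Phan17}); the lower bound in \eqref{eq:Theorem:D2} follows as above without radial symmetry, and its upper bound follows by the same near/far-from-origin splitting. This proves the ``in particular'' assertion.
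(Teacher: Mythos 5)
Your overall strategy --- rescale, extract a limiting profile from a putative Type II sequence, and derive a contradiction by intersection-comparison with the singular steady state, using $p<p_{\rm JL}(\sigma)$ to guarantee infinitely many intersections --- is indeed the strategy of the paper. But the proposal has a genuine gap at exactly the point you flag as ``the main obstacle'': you never actually produce the limiting steady-state profile from a Type II sequence, and this is where essentially all of the work in the paper's proof lies. Concretely, what is missing is the content of Proposition~\ref{Proposition:rescale} and its supporting Lemmas~\ref{Lemma:4.1}--\ref{Lemma:4.4}: (i) because of the factor $|x|^{\sigma}$ the maximum point $x(t)$ of $u(\cdot,t)$ need not sit at the origin, so one must first prove that under Type II blow-up $u(0,t_j)/M(t_j)\not\to 0$ and $|x(t)|\le C\sqrt{T-t}$; this uses Phan's universal singularity/decay estimate (Proposition~\ref{Proposition:Phan13}) and a separate intersection argument against the Filippas--Tertikas backward self-similar solutions $\Phi_m$ of \eqref{eq:BSSS} (Lemma~\ref{Lemma:4.3}), an ingredient that exists precisely for $p_{\rm S}(\sigma)<p<p_{\rm JL}(\sigma)$ and which your sketch does not mention; (ii) to show the rescaled limit is a \emph{steady} state one needs $\partial_s v_\infty(\cdot,0)\equiv 0$, which the paper obtains from the eventual constancy of the zero number of $u_t(\cdot,t)$ (Lemma~\ref{Lemma:4.1}), not from the energy; your weighted Lyapunov functional gives $E\ge 0$ and boundedness but, as you concede, this is inconclusive in the supercritical range. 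Without these steps the claimed ``oscillation across $\varphi^*$'' has no object to oscillate.

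Two further points are stated too glibly. The lower bound in \eqref{eq:Theorem:D2} concerns the \emph{weighted} norm $\||\cdot|^{\sigma/(p-1)}u(\cdot,t)\|_{L^\infty}$; a ``standard ODE comparison'' only yields the lower bound for $\|u(\cdot,t)\|_{L^\infty}$. The paper's Proposition~\ref{Proposition:lowerU} instead runs a Duhamel argument on $\mathcal{M}(t)=\||\cdot|^{\sigma/(p-1)}u(\cdot,t)\|_{L^\infty}$, using Lemma~\ref{Lemma:HI21} to absorb the weight and Proposition~\ref{Proposition:utoU} to know that $\mathcal{M}(t)\to\infty$ as $t\nearrow T$. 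Likewise, your treatment of the upper bound in \eqref{eq:Theorem:D2} on $\{|x|\ge\sqrt{T-t}\}$ by ``localisation away from the origin'' fails in the intermediate region $\sqrt{T-t}\le |x|\le\delta$, where $|x|^{\sigma}$ is \emph{not} bounded below; the paper closes this region with the universal estimate \eqref{eq:Phan13}, which gives $|x|^{\sigma/(p-1)}u\le C\bigl(t^{-1/(p-1)}+(T-t)^{-1/(p-1)}+|x|^{-2/(p-1)}\bigr)\le C(T-t)^{-1/(p-1)}$ there. Both of these are repairable, but as written they are gaps.
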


We note that the weighted norm $\||\cdot|^{\sigma/(p-1)}u(\cdot, t)\|_{L^\infty(B_R )}$ also blows up at the same time as that of $\| u(\cdot, t)\|_{L^\infty(B_R )}$:
\[
\lim_{t\nearrow T} \||\cdot|^\frac{\sigma}{p-1}u(\cdot, t)\|_{L^\infty(B_R )} = \infty. 
\]
See Proposition~\ref{Proposition:utoU} below. 
A recent result by Tayachi and Weissler \cite[Appendix B]{TW23} shows that 
the set of functions with this weighted norm finite plays an important role in the study of the well-posedness to the 
Cauchy problem in $\R^N$ for the H\'{e}non parabolic equation.
We are able to estimate the blow-up rate of $\||\cdot|^{\sigma/(p-1)}u(\cdot, t)\|_{L^\infty(B_R )}$ as well as 
$\|u(\cdot, t)\|_{L^\infty(B_R )}$. Moreover, the former blows up with a common rate, whereas blow-up rates of 
the latter may be different depending on whether $0$ is a blow-up point or not. See Remark~\ref{Rem:R1} below for the detail.

Combining Theorems \ref{Theorem:C} and \ref{Theorem:D}, we see that $u^*$ indeed exhibits Type I blow-up 
and that the rate is 
optimal.
We summarize this fact for completeness.

\begin{corollary}
Assume $N\ge 3$, $p_{\rm S}(\sigma) < p <p_{\rm JL}(\sigma)$, and  $\sigma>0$.
Let $u^*$ be the solution of problem~\eqref{eq:Henon} constructed in Theorem~{\rm \ref{Theorem:A}}, which blows up at $t=T\in (0,\infty)$.
Then there exist  constants $C, C'>0$ such that 
\begin{equation}
C (T-t)^{-\frac{2+\sigma}{2(p-1)}} \le \|u^*(\cdot,t)\|_{L^\infty(B_R)} \le C' (T-t)^{-\frac{2+\sigma}{2(p-1)}}\quad \mbox{for} \quad 0<t<T
\end{equation}
and
\begin{equation}
C (T-t)^{-\frac{1}{p-1}} \le \||\cdot|^\frac{\sigma}{p-1}u^*(\cdot,t)\|_{L^\infty(B_R)} \le C' (T-t)^{-\frac{1}{p-1}} \quad \mbox{for} \quad 0<t<T.
\end{equation}
\end{corollary}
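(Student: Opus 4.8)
The plan is to read the corollary off directly from Theorems \ref{Theorem:C} and \ref{Theorem:D}, so the argument is essentially bookkeeping. First I would observe that the function $u^*$ furnished by Theorem \ref{Theorem:A} is, by construction, a nonnegative radially symmetric solution of problem~\eqref{eq:Henon} blowing up at the finite time $T$; hence Theorem \ref{Theorem:D} applies to $u^*$ verbatim. This immediately supplies the upper bound $\|u^*(\cdot,t)\|_{L^\infty(B_R)} \le C'(T-t)^{-(2+\sigma)/(2(p-1))}$ of the first display as well as the entire two-sided estimate
\[
C(T-t)^{-\frac{1}{p-1}} \le \bigl\| |\cdot|^{\frac{\sigma}{p-1}} u^*(\cdot,t) \bigr\|_{L^\infty(B_R)} \le C'(T-t)^{-\frac{1}{p-1}}, \qquad 0<t<T,
\]
of the second display.

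Next I would invoke Theorem \ref{Theorem:C}, which is stated precisely for this $u^*$ and provides the complementary lower bound $C(T-t)^{-(2+\sigma)/(2(p-1))} \le \|u^*(\cdot,t)\|_{L^\infty(B_R)}$ on $(0,T)$. Concatenating the two chains of inequalities and relabelling the constants so that a single pair $C\le C'$ works in each display completes the argument. I do not expect any genuine obstacle at this stage: all of the analytic effort has already been spent in Theorems \ref{Theorem:A}, \ref{Theorem:C}, and \ref{Theorem:D}, and this statement merely packages their conclusions for the constructed example.

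What I would nonetheless make explicit in the write-up is the division of labour, since it clarifies why the corollary is not vacuous. The upper bounds and the lower bound on the weighted norm come from Theorem \ref{Theorem:D} and are common to every nonnegative radially symmetric blow-up solution. The lower bound on the unweighted norm, however, is particular to $u^*$: it rests on the fact $B(u_0^*)=\{0\}$ from Theorem \ref{Theorem:A}, and, as recalled in the discussion following Theorem \ref{Theorem:C}, it genuinely fails whenever $0\notin B(u_0)$ (in which case only the slower rate \eqref{eq:TypeI*} is available via \cite{GLS10}). Thus the corollary records that, for the constructed example, the blow-up is of Type~I in the sense of \eqref{eq:TypeI} with the sharp rate $(T-t)^{-(2+\sigma)/(2(p-1))}$, and that the associated weighted norm blows up at the common rate $(T-t)^{-1/(p-1)}$.
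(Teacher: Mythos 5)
Your proposal is correct and coincides with the paper's own justification: the corollary is obtained exactly by applying Theorem~\ref{Theorem:D} to $u^*$ (which qualifies as a nonnegative radially symmetric finite-time blow-up solution) for the upper bound and the two-sided weighted estimate, and Theorem~\ref{Theorem:C} for the remaining lower bound. The paper explicitly presents the corollary as a summary "combining Theorems~\ref{Theorem:C} and \ref{Theorem:D}," so no further argument is needed.
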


\begin{remark}
\label{Rem:R1}
It is interesting to argue whether or not the estimate \eqref{eq:Theorem:C1} is also optimal for general blow-up solutions 
of problem~\eqref{eq:Henon}. Notice that it does not hold for solutions with $0$ not a blow-up point, since 
the ODE rate \eqref{eq:TypeI*} is valid for such solutions. 
Notice also that we have $\limsup_{t\nearrow T} (T-t)^{1/(p-1)} \| u(\cdot, t) \|_{L^{\infty}(B_R )} = +\infty$ for 
every radial solution of problem~\eqref{eq:Henon} with $0$ a blow-up point, where $T<\infty$ is the blow-up time. 
Indeed, this is an immediate consequence of \eqref{eq:Theorem:B} with $r>0$ small enough. 
On the other hand, the weighted norm $\||\cdot|^{\sigma/(p-1)}u(\cdot,t)\|_{L^\infty(B_R)}$ is of order $(T-t)^{-1/(p-1)}$ 
as \eqref{eq:Theorem:D2} shows, no matter whether $0$ is a blow-up point or not.
\end{remark}


Notice that the method of \cite{MM04} is inapplicable, since the translation invariance of equation, a basic 
observation of \cite{MM04}, is broken in  problem~\eqref{eq:Henon} as long as $\sigma \not= 0$.
Our arguments are inspired by those of \cite[Theorem 2]{CFG08} for
semilinear heat equations corresponding to the equation of \eqref{eq:Henon} with $\sigma=0$ and $u^p$ replaced by 
general nonlinearities $f(u)$ with polynomial growth. 
The authors of \cite{FK25} have recently extended the arguments of \cite{CFG08} further to 
semilinear heat equations with more general nonlinearities $f(u)$.
In \cite[Theorem 2]{CFG08} and \cite[Theorem 2]{FK25}, the authors assume that $u(0,t)= \max_{B_R} u(\cdot,t)$ holds for solutions $u$ of 
the Cauchy--Dirichlet problem in $B_R$ and for all $t$ sufficiently close to the blow-up time $T$. 
This condition is satisfied if the initial data $u_0$ is radially nonincreasing. 
However, in our equation of \eqref{eq:Henon}, we cannot assume that the maximum point is fixed at the origin due to the presence of
the factor $|x|^{\sigma}$ with $\sigma>0$. Nevertheless, we can prove our claim as long as $\sigma >0$ 
by means of universal estimates for radial solutions 
to the H\'{e}non parabolic equation due to \cite{Phan13}(see Proposition~{\rm \ref{Proposition:SDE}}) 
and careful intersection-comparison arguments. 

The rest of this paper is organized as follows. 
In Section~2 we prepare notation and prove some preliminary lemmas.
In Section~3 we establish a priori estimates of solutions of  problem~\eqref{eq:Henon} and  prove Theorem~\ref{Theorem:A}.
We prove Theorem \ref{Theorem:B} in Section 4.
A large part of this section is devoted to estimating blow-up rate of solutions of problem~\eqref{eq:Henon}, which leads to Theorems~\ref{Theorem:C} and \ref{Theorem:D}.
In Section~5 we classify threshold solutions of problem~\eqref{eq:Henon} for every $p>1+\sigma/N$, the ones constructed to prove Theorem~\ref{Theorem:A}.
For details of threshold solutions, see Section~3.

\section{Preliminaries}
In this section, we prepare notation and prove some preliminary lemmas.

\subsection{Singularity and decay estimate}
As stated in the previous section, the proofs of our main results are mainly carried out by improving the arguments of \cite{GS11} and \cite{CFG08}.
A key to this lies in the singular and decay estimate established by Phan~\cite{Phan13}, which generalizes  \cite[Theorem~3.1]{PQS2007} (cf.~Remark 3.4(b) therein) to the equation of \eqref{eq:Henon}.

\begin{proposition}
\label{Proposition:Phan13}
Assume $N\ge 1$,  $p>1+\sigma/N$, and $\sigma>0$.
Let $u$ be a nonnegative radially symmetric solution to the equation of \eqref{eq:Henon} in $B_R\times (0,T)$, where $T>0$.
Then there exists $C>0$ depending only on $N$, $p$, and $\sigma$ such that
\begin{equation}
\label{eq:Phan13}
\begin{split}
|x|^\frac{\sigma}{p-1} u(x,t) 
\le C
( t^{-\frac{1}{p-1}} + (T-t)^{-\frac{1}{p-1}} + |x|^{-\frac{2}{p-1}} )
\end{split}
\end{equation}
for all $0<|x|<R/2$ and $t\in (0,T)$.
\end{proposition}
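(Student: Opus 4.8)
The plan is to argue by contradiction through a doubling--rescaling procedure that reduces the estimate to a one-dimensional parabolic Liouville theorem, in the spirit of the Pol\'{a}\v{c}ik--Quittner--Souplet method; this is essentially the argument of \cite{Phan13}. By the strong maximum principle one may assume that either $u\equiv 0$ in $B_R\times(0,T)$, in which case there is nothing to prove, or $u>0$ there, so that $u$ is smooth on $(B_R\setminus\{0\})\times(0,T)$. The equation is invariant under $u(x,t)\mapsto\lambda^{\frac{2+\sigma}{p-1}}u(\lambda x,\lambda^2 t)$, whose natural scalar invariant at a point away from the origin is
\[
M(x,t):=|x|^{\frac{\sigma}{2}}\,u(x,t)^{\frac{p-1}{2}},\qquad (x,t)\in D:=\big(B_R\setminus\{0\}\big)\times(0,T).
\]
Writing $d_P$ for the parabolic distance and $\Gamma:=\{t=0\}\cup\{t=T\}\cup\{x=0\}\cup\{|x|=R\}$, the estimate \eqref{eq:Phan13}, raised to the power $p-1$ and restricted to $\{|x|<R/2\}$, is equivalent to $M(x,t)\,d_P\big((x,t),\Gamma\big)\le C$. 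If this failed, there would exist solutions $u_k$ on $B_{R_k}\times(0,T_k)$ for which $\sup_{D_k}M_k\,d_P(\cdot,\Gamma_k)=+\infty$, where $D_k,\Gamma_k,M_k$ have the obvious meaning.

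First I would apply the doubling lemma of \cite{PQS2007} to extract, for each large $k$, a point $(x_k,t_k)\in D_k$ with $M_k(x_k,t_k)\,d_P((x_k,t_k),\Gamma_k)>2k$ and $M_k\le 2M_k(x_k,t_k)$ on the parabolic ball of radius $k\lambda_k$ centred at $(x_k,t_k)$, where $\lambda_k:=M_k(x_k,t_k)^{-1}$. Since $d_P((x_k,t_k),\Gamma_k)$ is bounded above by each of $|x_k|$, $\sqrt{t_k}$, $\sqrt{T_k-t_k}$ and $R_k-|x_k|$, the decisive consequences are $\lambda_k/|x_k|<1/(2k)\to 0$ and $t_k/\lambda_k^2,\ (T_k-t_k)/\lambda_k^2\to\infty$. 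Then I would set
\[
v_k(y,s):=\lambda_k^{\frac{2}{p-1}}\,|x_k|^{\frac{\sigma}{p-1}}\,u_k\!\big(x_k+\lambda_k y,\ t_k+\lambda_k^2 s\big),
\]
which solves $\partial_s v_k=\Delta v_k+c_k\,v_k^p$ with $c_k(y,s)=\big(|x_k+\lambda_k y|/|x_k|\big)^{\sigma}$, satisfies $v_k(0,0)=1$ by the choice of $\lambda_k$, and, by the doubling bound together with $\lambda_k/|x_k|\to 0$, satisfies $0\le v_k\le C$ on the cylinders $\{\,|y|+|s|^{1/2}<k\,\}$, which exhaust $\R^N\times\R$; moreover $c_k\to 1$ locally uniformly.

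Next, interior parabolic regularity ($L^q$ estimates followed by Schauder estimates) gives uniform $C^{2+\alpha,1+\alpha/2}_{\mathrm{loc}}$ bounds on $v_k$, so along a subsequence $v_k\to v_\infty$ in $C^{2,1}_{\mathrm{loc}}(\R^N\times\R)$, where $v_\infty\ge 0$ is a bounded solution of $\partial_s v_\infty=\Delta v_\infty+v_\infty^p$ on the whole of $\R^N\times\R$ with $v_\infty(0,0)=1$. The decisive point --- and this is where the degeneracy of $|x|^\sigma$ at the origin is used --- is that the rescaling is performed at the scale $\lambda_k=o(|x_k|)$, so that the spheres $\{|x|=\mathrm{const}\}$ flatten and the rotational symmetry of the $u_k$ passes, in the limit, to invariance of $v_\infty$ under every translation orthogonal to $e:=\lim x_k/|x_k|$. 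Hence $v_\infty(y,s)=w(e\cdot y,s)$ for some bounded $w\ge 0$ solving the \emph{one-dimensional} equation $\partial_s w=\partial_{zz}w+w^p$ on $\R\times\R$ with $w(0,0)=1$. Since the Sobolev critical exponent in one space dimension is $+\infty$, the parabolic Liouville theorem applies for \emph{every} $p>1$ (see \cite{QS19}) and forces $w\equiv 0$ --- a contradiction, which proves \eqref{eq:Phan13}.

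I expect the symmetry-reduction step to be the main obstacle: one must carry the doubling/rescaling bookkeeping precisely enough that, on each fixed parabolic cylinder, both the coefficient $c_k$ and the curvature error incurred in flattening the spheres tend to $0$, so that the limit is a genuine one-dimensional entire solution and not an $N$-dimensional one. This is precisely what dispenses with any Sobolev-supercritical restriction on $p$: in the translation-invariant case $\sigma=0$ the set $\{x=0\}$ does not belong to $\Gamma$, the doubling point may approach the origin on the scale $\lambda_k$, the limit is then a full $N$-dimensional radial solution, and one is driven back to the restriction $p<p_{\rm S}(0)$. The standing hypothesis $p>1+\sigma/N$ enters through the local integrability of the weight $|x|^{-\sigma/(p-1)}$ underlying the framework in which the estimate is stated.
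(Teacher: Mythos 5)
The paper offers no proof of this proposition: it is quoted verbatim from Phan \cite{Phan13}*{Theorem 1.3}, and the ``proof'' in the text is the citation. Your proposal is a correct reconstruction of the argument behind that citation (doubling lemma, rescaling at the invariant scale $\lambda_k=M_k(x_k,t_k)^{-1}$ with $\lambda_k/|x_k|\to0$ forced by putting $\{x=0\}$ into the singular set $\Gamma$, reduction to the one\nobreakdash-dimensional parabolic Liouville theorem, which holds for every $p>1$ since the Bidaut-V\'eron exponent $N(N+2)/(N-1)^2$ is infinite for $N=1$); this is also consistent with how the authors themselves argue in the proof of Proposition~\ref{Proposition:SDE}, the only place where they actually carry out such a rescaling. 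The one structural difference worth flagging is your reduction to one dimension: you rescale in all $N$ variables and then argue that radial symmetry plus the flattening of the spheres $\{|x|=\mathrm{const}\}$ forces $v_\infty(y,s)=w(e\cdot y,s)$. This does work, but it needs the locally uniform gradient bounds on $v_k$ (available from the interior Schauder estimates) to convert the statement $\rho_k(y+h)-\rho_k(y)=e_k\cdot h+O(\lambda_k/|x_k|)\to 0$ for $h\perp e$ into $v_\infty(y+h,s)=v_\infty(y,s)$; you correctly identify this as the delicate point but leave it at the level of an assertion. The cleaner route --- the one used both by Phan and by the paper in Proposition~\ref{Proposition:SDE} --- is to rescale the radial profile directly, $w_k(\rho,s):=\lambda_k^{2/(p-1)}|x_k|^{\sigma/(p-1)}U_k(|x_k|+\lambda_k\rho,\,t_k+\lambda_k^2 s)$, so that the limit equation is one-dimensional from the outset and the only things to check are that the drift $\tfrac{N-1}{\rho+|x_k|/\lambda_k}\partial_\rho w_k$ and the coefficient $\bigl((|x_k|+\lambda_k\rho)/|x_k|\bigr)^\sigma-1$ vanish locally uniformly, both immediate from $\lambda_k/|x_k|\to0$. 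Everything else in your write-up (the equivalence of \eqref{eq:Phan13} with $M\,d_P(\cdot,\Gamma)\le C$ on $\{|x|<R/2\}$, the normalization $v_k(0,0)=1$, the uniform bound $0\le v_k\le C$ from the doubling inequality, and the fact that the boundary $\{|x|=R\}$ being in $\Gamma$ removes any half-space case) checks out.
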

\begin{proof}
See \cite[Theorem~1.3]{Phan13}.
\end{proof}

\begin{remark}
If $u$ is a global-in-time solution of problem~\eqref{eq:Henon}, 
the region of validity of 
\eqref{eq:Phan13} can be extended to $0<|x|<R$ and $t>0$. 
Moreover, the same estimate holds also for a certain blow-up solution. 
This fact plays an important role in proving Theorem \ref{Theorem:A}
(cf. Proposition~{\rm \ref{Proposition:SDE}} below).
\end{remark}

\subsection{Blow-up of weighted norm}
For $x,y \in \overline{B_R}$ and $t>0$, let $G_R=G_R(x,y,t)$ be the Dirichlet heat kernel on $B_R$.
For $x\in \mathbb{R}^N$ and $t>0$ let $G=G(x,t)$ denote the Gauss kernel, that is,
\[
G(x,t) = \frac{1}{(4\pi t)^\frac{N}{2}} \exp\left(-\frac{|x|^2}{4t}\right).
\]
We will write $e^{t\Delta}$ for the Dirichlet heat semigroup on $B_R$:
\[
[e^{t\Delta} \phi](x) := \int_{B_R} G_R(x,y,t) \phi(y) \, \di y.
\]
By the comparison principle, there holds
\[
0\le G_R(x,y,t) \le G(x-y,t)
\]
for all $x,y\in \overline{B_R}$ and $t>0$.
\begin{lemma}
\label{Lemma:CisI}
Let $u$ be a classical solution of problem~\eqref{eq:Henon}.
Then $u$ satisfies
\begin{equation*}
\begin{split}
u(x,t) 
&= [e^{t\Delta}u(\cdot,\tau)](x)+ \int_\tau^t[e^{(t-s)\Delta}|\cdot|^\sigma u(\cdot,s)^p](x)\, \di s
\end{split}
\end{equation*}
for all $x\in B_R$ and $0\le \tau<t<T$.
\end{lemma}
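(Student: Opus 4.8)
The plan is to prove the identity by the variation--of--constants (Duhamel) principle for the Dirichlet problem. Fix $x\in B_R$ and $t\in(\tau,T)$, and study the auxiliary function
\[
w(s):=[e^{(t-s)\Delta}u(\cdot,s)](x)=\int_{B_R}G_R(x,y,t-s)\,u(y,s)\,\di y,\qquad s\in(\tau,t).
\]
Since $u$ is a classical solution it is $C^{2,1}$ on $B_R\times(0,T)$, and parabolic boundary regularity gives $u(\cdot,s)\in C^2(\overline{B_R})$ with $u(\cdot,s)=0$ on $\partial B_R$ for every $s\in(0,T)$; moreover $G_R(x,\cdot,\theta)\in C^\infty(\overline{B_R})$ vanishes on $\partial B_R$ for each $\theta>0$. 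Hence $w\in C^1((\tau,t))$ and one may differentiate under the integral sign. Using $\partial_\theta G_R(x,y,\theta)=\Delta_y G_R(x,y,\theta)$ (the heat equation in the second variable, by the symmetry $G_R(x,y,\theta)=G_R(y,x,\theta)$) together with $\partial_s u=\Delta u+|y|^\sigma u^p$, one gets
\[
w'(s)=\int_{B_R}\bigl[-\Delta_y G_R(x,y,t-s)\,u(y,s)+G_R(x,y,t-s)\,\Delta_y u(y,s)\bigr]\di y+\int_{B_R}G_R(x,y,t-s)\,|y|^\sigma u(y,s)^p\,\di y.
\]
By Green's second identity on $B_R$ the first integral equals the boundary term $\int_{\partial B_R}\bigl[G_R\,\partial_\nu u-u\,\partial_\nu G_R\bigr]\di S_y$, which vanishes because $G_R(x,\cdot,t-s)$ and $u(\cdot,s)$ both vanish on $\partial B_R$ (if one prefers to avoid boundary regularity of $u$, apply Green's identity on $B_{R'}$ and let $R'\nearrow R$). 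Thus $w'(s)=[e^{(t-s)\Delta}(|\cdot|^\sigma u(\cdot,s)^p)](x)$ on $(\tau,t)$; equivalently this is the semigroup identity $\partial_s\,e^{(t-s)\Delta}u(\cdot,s)=e^{(t-s)\Delta}(|\cdot|^\sigma u(\cdot,s)^p)$, valid since $u(\cdot,s)$ lies in the domain of the Dirichlet Laplacian.

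Integrating over $[s_0,s_1]\subset(\tau,t)$ yields
\[
w(s_1)-w(s_0)=\int_{s_0}^{s_1}[e^{(t-s)\Delta}(|\cdot|^\sigma u(\cdot,s)^p)](x)\,\di s,
\]
and it remains to pass to the limits $s_0\searrow\tau$ and $s_1\nearrow t$. The right--hand side converges to $\int_\tau^t(\cdots)\,\di s$ by dominated convergence, since for $t<T$ the integrand is bounded by $R^\sigma\|u\|_{L^\infty(\overline{B_R}\times[\tau,t])}^p$ uniformly in $s$. The limit $s_0\searrow\tau$ is harmless: there $t-s$ stays bounded away from $0$, $G_R(x,\cdot,t-s)\to G_R(x,\cdot,t-\tau)$ uniformly and $u(\cdot,s)\to u(\cdot,\tau)$ uniformly on $\overline{B_R}$, so $w(s_0)\to[e^{(t-\tau)\Delta}u(\cdot,\tau)](x)$; the case $\tau=0$ is identical, with $u(\cdot,0)=u_0\in C(\overline{B_R})$ vanishing on $\partial B_R$.

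The one genuinely delicate point is the limit $s_1\nearrow t$, where the kernel $G_R(x,\cdot,t-s_1)$ concentrates to a point mass. Here I would split
\[
w(s_1)-u(x,t)=[e^{(t-s_1)\Delta}(u(\cdot,s_1)-u(\cdot,t))](x)+\bigl([e^{(t-s_1)\Delta}u(\cdot,t)](x)-u(x,t)\bigr),
\]
bound the first term by $\|u(\cdot,s_1)-u(\cdot,t)\|_{L^\infty(B_R)}$, which tends to $0$ by uniform continuity of $u$ on the compact set $\overline{B_R}\times[\tau,t]$, and send the second term to $0$ by strong continuity of the Dirichlet heat semigroup on $C_0(\overline{B_R})$ applied to $u(\cdot,t)\in C_0(\overline{B_R})$. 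Hence $w(s_1)\to u(x,t)$, and combining the three limits gives
\[
u(x,t)=[e^{(t-\tau)\Delta}u(\cdot,\tau)](x)+\int_\tau^t[e^{(t-s)\Delta}(|\cdot|^\sigma u(\cdot,s)^p)](x)\,\di s,
\]
which is the asserted formula.
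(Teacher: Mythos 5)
Your proof is correct. The paper states Lemma~2.1 without proof (it is the standard variation-of-constants identity), and your Duhamel argument---differentiating $s\mapsto e^{(t-s)\Delta}u(\cdot,s)$, killing the boundary terms in Green's identity via the Dirichlet conditions on $G_R(x,\cdot,\theta)$ and $u(\cdot,s)$, and handling the endpoint limits $s_0\searrow\tau$ and $s_1\nearrow t$ carefully---is exactly the justification the authors are implicitly relying on; in particular your treatment of the singular limit $s_1\nearrow t$ via the splitting into $e^{(t-s_1)\Delta}\bigl(u(\cdot,s_1)-u(\cdot,t)\bigr)$ plus the strong-continuity term is the right way to close the argument. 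One remark: what you derive (correctly) is $u(x,t)=[e^{(t-\tau)\Delta}u(\cdot,\tau)](x)+\int_\tau^t[e^{(t-s)\Delta}|\cdot|^{\sigma}u(\cdot,s)^p](x)\,\di s$, whereas the lemma as printed has $e^{t\Delta}u(\cdot,\tau)$ in the first term; this is evidently a typo in the statement (the two agree only for $\tau=0$), and the version you prove is the one actually used later in the paper, e.g.\ in the proof of Proposition~\ref{Proposition:lowerU}.
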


The following lemma is used to handle a potential term $|x|^\sigma$.
For its proof, see  \cite[Lemma~2.4]{HI21}.
\begin{lemma}
\label{Lemma:HI21}
For every $0<k<N$, there exists $C>0$ depending only on $N$ and $k$ such that, for any $x\in\mathbb{R}^N$ and $t>0$,
\[
\int_{\mathbb{R}^N} G(x-y,t) |y|^{-k} \, \di y \le C|x|^{-k}.
\]
\end{lemma}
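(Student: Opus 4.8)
The plan is to reduce the estimate to the case $t=1$ by parabolic rescaling, and then to split the resulting integral according to the size of $|z|$ relative to $|x|$.

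First I would substitute $y=\sqrt{t}\,z$ and put $\xi:=x/\sqrt{t}$. Since $|x-y|^2/(4t)=|\xi-z|^2/4$ and $\di y=t^{N/2}\di z$, a direct computation gives
\[
\int_{\mathbb{R}^N} G(x-y,t)\,|y|^{-k}\,\di y=t^{-k/2}\,\Phi(\xi),\qquad \Phi(\xi):=\int_{\mathbb{R}^N} G(\xi-z,1)\,|z|^{-k}\,\di z.
\]
As $t^{-k/2}|\xi|^{-k}=|x|^{-k}$, it suffices to prove $\Phi(\xi)\le C|\xi|^{-k}$ for all $\xi\in\mathbb{R}^N$ with $C$ depending only on $N$ and $k$ (the case $x=0$ being trivial). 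I would then observe that $\Phi$ is bounded on $\mathbb{R}^N$: cutting the integral at $|z|=1$, the part over $|z|<1$ is at most $\|G(\cdot,1)\|_{L^\infty}\int_{|z|<1}|z|^{-k}\,\di z<\infty$ because $k<N$, and the part over $|z|\ge1$ is at most $\int_{\mathbb{R}^N}G(\xi-z,1)\,\di z=1$. Calling a uniform bound $M$, the inequality $\Phi(\xi)\le M\le M|\xi|^{-k}$ already settles the range $|\xi|\le1$.

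For $|\xi|>1$ I would split $\mathbb{R}^N=\{|z|\le|\xi|/2\}\cup\{|z|>|\xi|/2\}$. On the outer region one has $|z|^{-k}\le 2^{k}|\xi|^{-k}$, so that contribution is bounded by $2^{k}|\xi|^{-k}\int_{\mathbb{R}^N}G(\xi-z,1)\,\di z=2^{k}|\xi|^{-k}$. On the inner region $|\xi-z|\ge|\xi|/2$, hence $G(\xi-z,1)\le(4\pi)^{-N/2}e^{-|\xi|^2/16}$, while $\int_{|z|\le|\xi|/2}|z|^{-k}\,\di z$ is a constant multiple of $|\xi|^{N-k}$; thus the inner contribution is at most $C\,e^{-|\xi|^2/16}|\xi|^{N-k}$. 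Since $\sup_{s\ge1}e^{-s^2/16}s^{N}<\infty$, this is again $\le C'|\xi|^{-k}$. Adding the two pieces yields $\Phi(\xi)\le C|\xi|^{-k}$ for $|\xi|>1$, and combining with the previous case and undoing the rescaling completes the proof.

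The argument is elementary and I do not expect a real obstacle; the only delicate point is to keep every constant independent of $x$ and $t$, which reduces to the uniform boundedness of the ``Gaussian times power'' quantity $e^{-|\xi|^2/16}|\xi|^{N-k}$ for $|\xi|\ge1$. This is exactly where the rapid decay of the heat kernel is needed, and it is what makes the near-origin contribution subordinate to $|\xi|^{-k}$; the hypothesis $k<N$ is used only for the local integrability of $|z|^{-k}$ at the origin.
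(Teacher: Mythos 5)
Your argument is correct and complete: the parabolic rescaling reduces matters to $t=1$, the dichotomy $|\xi|\le 1$ versus $|\xi|>1$ is handled properly, and the splitting at $|z|=|\xi|/2$ together with the bound $\sup_{s\ge 1}e^{-s^2/16}s^{N}<\infty$ gives the claimed decay; the hypothesis $k<N$ is used exactly where you say it is. The paper itself offers no proof of this lemma --- it simply cites \cite{HI21}*{Lemma~2.4} --- so your write-up is a self-contained substitute for that reference, carried out by the standard scaling-plus-dyadic-splitting method one would expect there.
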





To determine the blow-up rate of $|x|^{\sigma/(p-1)}u(x,t)$, we  show that the blow-up times of $u(x,t)$ and $|x|^{\sigma/(p-1)}u(x,t)$ coincide.

\begin{proposition}
\label{Proposition:utoU}
Assume $N\ge 1$, $p>1+\sigma/N$, and  $\sigma>0$.
Let $u$ be a nonnegative solution of problem~\eqref{eq:Henon}.
The following two claims are equivalent:
\begin{itemize}
    \item[(a)] $u(x,t)$ blows up at $t=T$;
    \item[(b)] $|x|^{\sigma/(p-1)}u(x,t)$ blows up at $t=T$.
\end{itemize}
\end{proposition}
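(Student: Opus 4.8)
The plan is to prove the two implications separately. The implication $(b)\Rightarrow(a)$ is immediate: since $|x|^{\sigma/(p-1)}\le R^{\sigma/(p-1)}$ on $B_R$, one has $\||\cdot|^{\sigma/(p-1)}u(\cdot,t)\|_{L^\infty(B_R)}\le R^{\sigma/(p-1)}\|u(\cdot,t)\|_{L^\infty(B_R)}$ for every $t\in(0,T)$, so if the left-hand side blows up as $t\nearrow T$, then so does $\|u(\cdot,t)\|_{L^\infty(B_R)}$. The content of the proposition is the reverse implication, which I would prove by contraposition using a comparison argument that linearizes the equation.

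So assume $|x|^{\sigma/(p-1)}u(x,t)$ does not blow up at $T$, that is, there are $\tau\in(0,T)$ and $M_0>0$ with $M(t):=\||\cdot|^{\sigma/(p-1)}u(\cdot,t)\|_{L^\infty(B_R)}\le M_0$ for all $t\in[\tau,T)$. The key observation is the pointwise estimate
\[
|x|^{\sigma}u(x,t)^{p}=u(x,t)\bigl(|x|^{\sigma/(p-1)}u(x,t)\bigr)^{p-1}\le M(t)^{p-1}u(x,t)\le M_0^{p-1}\,u(x,t)
\]
for $x\in B_R$ and $t\in[\tau,T)$, which shows that the nonnegative classical solution $u$ is a subsolution on $B_R\times(\tau,T)$ of the \emph{linear} Dirichlet problem $\partial_t w=\Delta w+M_0^{p-1}w$, $w=0$ on $\partial B_R$, $w(\cdot,\tau)=u(\cdot,\tau)$, whose solution is $e^{M_0^{p-1}(t-\tau)}e^{(t-\tau)\Delta}u(\cdot,\tau)$. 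Applying the parabolic comparison principle (with the constant, nonnegative zeroth-order coefficient $M_0^{p-1}$) gives
\[
0\le u(x,t)\le e^{M_0^{p-1}(t-\tau)}\bigl[e^{(t-\tau)\Delta}u(\cdot,\tau)\bigr](x)\le e^{M_0^{p-1}(T-\tau)}\|u(\cdot,\tau)\|_{L^\infty(B_R)}
\]
for all $x\in B_R$ and $t\in[\tau,T)$. Since $\tau<T$ implies $u(\cdot,\tau)\in C(\overline{B_R})$ and $T<\infty$ by the standing assumption, the right-hand side is a finite constant; hence $\|u(\cdot,t)\|_{L^\infty(B_R)}$ remains bounded as $t\nearrow T$, so $u$ does not blow up at $T$, completing the contrapositive of $(a)\Rightarrow(b)$.

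The one place requiring care is the bookkeeping attached to the phrase ``blows up at $t=T$'': one must check that the failure of $(b)$ genuinely supplies a \emph{uniform} bound $M(t)\le M_0$ on an entire left-neighborhood $[\tau,T)$ of $T$, rather than merely along a sequence $t_j\nearrow T$; this is exactly what keeps the coefficient $M_0^{p-1}$ in the linearized equation independent of $t$ and makes the final constant finite, and it is where $T<\infty$ is used. The comparison step itself is routine since $u$ vanishes on $\partial B_R$. I do not expect to need Proposition~\ref{Proposition:Phan13} or Lemma~\ref{Lemma:HI21} for this argument: a Duhamel approach based on Lemma~\ref{Lemma:CisI} together with Lemma~\ref{Lemma:HI21} would only produce the pointwise bound $u(x,t)\lesssim|x|^{-\sigma/(p-1)}$, which is singular at the origin and therefore not an $L^\infty$ bound, whereas the subsolution trick avoids this issue altogether.
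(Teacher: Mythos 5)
Your proof is correct and takes essentially the same route as the paper: contraposition, the pointwise linearization $|x|^{\sigma}u^{p}=u\,(|x|^{\sigma/(p-1)}u)^{p-1}\le M_0^{p-1}u$, and comparison of $u$ with a solution of the resulting linear Dirichlet problem. The only difference is the choice of supersolution: the paper compares with the solution issued from the singular data $K|x|^{-\sigma/(p-1)}$ and invokes $L^1$--$L^\infty$ smoothing, which is where the hypothesis $p>1+\sigma/N$ (i.e.\ $|x|^{-\sigma/(p-1)}\in L^1(B_R)$) is used, whereas you restart from the bounded data $u(\cdot,\tau)$ and thus avoid both the smoothing step and that hypothesis --- a mild simplification that does not change the substance of the argument.
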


\begin{proof}
${\rm (b)} \Rightarrow {\rm (a)}$ is obvious.
We shall show ${\rm (a)} \Rightarrow {\rm (b)}$.
Assume that $|x|^{\sigma/(p-1)}u(x,t)$ does not blow up at $t=T$, that is, $K:= \sup_{t\in (0,T)}\||\cdot|^{\sigma/(p-1)}u(\cdot,t)\|_{L^\infty(B_R)} < \infty$.
Then the blow-up point of $u(x,t)$ is only the origin. 
Consider 
\begin{equation}
\label{eq:aproximateHenon}
\left\{
\begin{array}{ll}
	\partial_t \phi  =  \Delta \phi + K^{p-1}\phi, \quad  &x\in B_R,\,\,\, t>0, \vspace{3pt}\\
	\phi = 0, \quad  &x\in  \partial B_R,\,\,\, t>0, \vspace{3pt}\\
        \phi(x,0) =  K|x|^{-\frac{\sigma}{p-1}}, \quad  &x\in B_R. \vspace{3pt}\\
\end{array}
\right.
\end{equation}
Since $p>1+\sigma/N$, we see that
\[
\phi(x,t) = e^{K^{p-1}t} [e^{t\Delta} \phi(\cdot, 0)](x) \quad \mbox{for} \quad (x,t)\in B_R\times (0,\infty)
\]
and, by the smoothing effect,
\[
\|\phi(\cdot,t)\|_{L^\infty(B_R)} \le Ct^{-\frac{N}{2}}e^{K^{p-1}t}\|\phi(\cdot,0)\|_{L^1(B_R)}<\infty \quad\mbox{for} \quad t>0.
\]
Fix $\tau \in (0,T)$.
Note that from the assumption, $u(x,\tau) \le K |x|^{-\sigma/(p-1)}$ holds for $x\in B_R$.
Since $u(x, t+\tau)$ is a subsolution of problem~\eqref{eq:aproximateHenon} in $B_R\times(0,T-\tau)$, 
by the comparison principle, 
$u(x,t+\tau)$ is bounded from above by $\phi(x,t)$ in $B_R \times (0, T-\tau)$.
This implies that $u$ is bounded at $t=T$, which is a contradiction.
Thus, assertion~(b) follows, and the proof is complete.
\end{proof}

\subsection{Steady states with Sobolev critical and supercritical nonlinearities}
In this subsection we study positive radially symmetric solutions to 
\begin{equation*}
-\Delta u = |x|^\sigma u^p \quad \mbox{in} \quad \mathbb{R}^N,
\end{equation*}
where $N\ge 3$, $p\ge p_{\rm S}(\sigma)$, and $\sigma >0$.
Positive radially symmetric classical solutions of this equation can be written in the form $u(x) = U(r)$, where $r=|x|$ and  $U\in C^2([0,\infty))$ is a positive solution of 
\begin{equation}
\label{eq:ellipticHenonradial}
U'' + \frac{N-1}{r} U' + r^\sigma U^p =0, \quad r\in(0,\infty), \qquad U'(0) = 0.
\end{equation}
Let $U_\infty(r) = c_\infty r^{-(2+\sigma)/(p-1)}$, where 
\[
c_\infty = \left[\frac{2+\sigma}{p-1} \left(N-2 - \frac{2+\sigma}{p-1} \right)\right]^\frac{1}{p-1}.
\]
Note that this is the singular steady state of equation in \eqref{eq:ellipticHenonradial}.
In this situation, solutions of problem~\eqref{eq:ellipticHenonradial} exist and satisfy the following properties.
For related results, see {\it e.g.},  \cites{Li92, QS19, Wang93}.

\begin{proposition}
\label{Proposition:SS}
Assume $N\ge 3$, $p\ge p_{\rm S}(\sigma)$, and $\sigma>0$. Given $m>0$, problem~\eqref{eq:ellipticHenonradial} possesses a unique 
positive solution $U_m\in C^2([0,\infty))$ satisfying $U_m (0) = m$.
This solution is monotone decreasing and satisfies 
\begin{equation*}
U_m(r) = m U_1(m^\frac{p-1}{2+\sigma}r ) \,\,\quad \mbox{for all} 
\,\, r>0.
\end{equation*}
If $p>p_{\rm S}(\sigma)$ then
$r^{(2+\sigma)/(p-1)} U_m (r) \to c_\infty$ as $r\to \infty$.
If $p=p_{\rm S} (\sigma)$, then
\[
U_1(r) = \left(\frac{(N+\sigma)(N-2)}{(N+\sigma)(N-2) + r^{2+\sigma}}\right)^\frac{N-2}{2+\sigma}.
\]
Let $m_1 > m_2>0$.
If $p\ge p_{\rm JL}(\sigma)$, then $U_\infty(r) > U_{m_1}(r) > U_{m_2}(r)$ for all $r>0$.
If $p_{\rm S}(\sigma) < p < p_{\rm JL}(\sigma)$, then $U_{m_1}$ and $U_{m_2}$ intersect infinitely many times, and 
$U_{m_1}$ and $U_\infty$ intersect infinitely many times as well. 
If $p=p_{\rm S}(\sigma)$, then $U_{m_1}$ and $U_{m_2}$ intersect once and $U_{m_1}$ and $U_\infty$ intersect twice.
\end{proposition}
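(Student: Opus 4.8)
The plan is to handle the four groups of assertions in sequence, reducing everything to the autonomous (Emden--Fowler) form of \eqref{eq:ellipticHenonradial}, to the scaling identity, and --- at the critical exponent --- to the explicit profile; the two genuinely analytic inputs I would import from \cites{Li92,Wang93} (and, for the model case $\sigma=0$, from \cite{QS19}).

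For existence, uniqueness, and monotonicity I would rewrite the equation as $(r^{N-1}U')'=-r^{N-1+\sigma}U^p$ and integrate twice against $U(0)=m$, $U'(0)=0$, obtaining $U(r)=m-\int_0^r s^{1-N}\int_0^s\tau^{N-1+\sigma}U(\tau)^p\,\di\tau\,\di s$; a contraction-mapping argument on a small $[0,\delta]$ produces a unique local $C^2$ solution $U_m$, and continuation a maximal one. From $r^{N-1}U_m'(r)=-\int_0^r s^{N-1+\sigma}U_m(s)^p\,\di s<0$ one sees that $U_m$ is strictly decreasing while positive; being also bounded above by $m$ it cannot diverge, so the only way it could fail to exist on all of $[0,\infty)$ is by vanishing at a finite radius, and this does not happen for $p\ge p_{\rm S}(\sigma)$ --- the Hénon form of the classical Pohozaev/Ni dichotomy, which I would cite. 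The scaling identity is then immediate: $V(r):=\mu U(\nu r)$ solves \eqref{eq:ellipticHenonradial} whenever $U$ does and $\nu^{2+\sigma}=\mu^{p-1}$, so taking $\mu=m$, $\nu=m^{(p-1)/(2+\sigma)}$, $U=U_1$ gives $V(0)=m$ and hence $V=U_m$ by uniqueness; and for $p=p_{\rm S}(\sigma)$ the displayed formula is checked by direct substitution.

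The core of the argument is the asymptotics and the intersection counts, where I would use $t=\ln r$, $U(r)=r^{-\alpha}w(t)$ with $\alpha:=(2+\sigma)/(p-1)$. A short computation shows that every term of \eqref{eq:ellipticHenonradial} carries the common factor $r^{-\alpha-2}$ and that $w$ solves the autonomous equation $w''+(N-2-2\alpha)w'-\alpha(N-2-\alpha)w+w^p=0$, whose unique positive rest point is $w\equiv c_\infty$. Two facts drive everything. First, $w_1(t)=r^\alpha U_1(r)\to0$ as $t\to-\infty$, so $w_1$ runs on the one-dimensional unstable manifold of the saddle $w=0$, and for $p>p_{\rm S}(\sigma)$ it converges to $c_\infty$ as $t\to+\infty$; this convergence --- equivalently $r^{(2+\sigma)/(p-1)}U_m(r)\to c_\infty$ --- requires excluding the competing ``fast-decay'' regime $w_1\to0$, which I would obtain from the nonincreasing energy $E=\tfrac12(w')^2-\tfrac12\alpha(N-2-\alpha)w^2+\tfrac1{p+1}w^{p+1}$ (nonincreasing because $N-2-2\alpha>0$ for $p>p_{\rm S}(\sigma)$), LaSalle's invariance principle, and the results of \cites{Li92,Wang93}. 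Second, linearizing at $c_\infty$ gives the characteristic equation $\lambda^2+(N-2-2\alpha)\lambda+(p-1)\alpha(N-2-\alpha)=0$ with discriminant $(N-2)^2-4p\,\alpha(N-2-\alpha)$, which vanishes exactly at $p=p_{\rm JL}(\sigma)$, is negative for $p_{\rm S}(\sigma)<p<p_{\rm JL}(\sigma)$ (so $w_1$ spirals into $c_\infty$, crossing the level $c_\infty$ infinitely often), and is positive for $p\ge p_{\rm JL}(\sigma)$ (so $w_1$ reaches $c_\infty$ through a node, monotonically and from below).

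Finally I would translate back. In the variable $t$ the scaling identity becomes the pure translation $w_m(t)=w_1\!\big(t+\tfrac{p-1}{2+\sigma}\ln m\big)$ --- the prefactor $m^{\alpha(p-1)/(2+\sigma)}=m$ cancelling exactly --- while $r^\alpha U_\infty\equiv c_\infty$; since multiplication by $r^\alpha>0$ preserves signs, for $m_1>m_2>0$ the intersections of $U_{m_1}$ with $U_{m_2}$ are the zeros of $w_1(\cdot+c)-w_1(\cdot)$, $c:=\tfrac{p-1}{2+\sigma}\ln(m_1/m_2)>0$, and those of $U_{m_1}$ with $U_\infty$ are the zeros of $w_1-c_\infty$. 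For $p\ge p_{\rm JL}(\sigma)$ the monotone node approach with $w_1<c_\infty$ gives $w_1(t+c)>w_1(t)$ and $w_1<c_\infty$ for all $t$, i.e.\ $U_\infty>U_{m_1}>U_{m_2}$ with no intersection (alternatively, a Sturm comparison against the equation linearized about $U_\infty$, which is nonoscillatory precisely when $(N-2)^2\ge4p\,c_\infty^{\,p-1}$, i.e.\ $p\ge p_{\rm JL}(\sigma)$). For $p_{\rm S}(\sigma)<p<p_{\rm JL}(\sigma)$ the spiralling at once gives infinitely many zeros of $w_1-c_\infty$, and substituting $w_1(t)=c_\infty+\operatorname{Re}(ae^{\lambda t})+o(e^{(\operatorname{Re}\lambda)t})$ into $w_1(t+c)-w_1(t)$ yields an oscillation of strictly positive amplitude (since $e^{\lambda c}\ne1$), hence infinitely many sign changes, so $U_{m_1}$ meets both $U_{m_2}$ and $U_\infty$ infinitely often. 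For $p=p_{\rm S}(\sigma)$ I would use the explicit profile directly: $U_{m_1}(r)=U_{m_2}(r)$ reduces to a linear equation in $r^{2+\sigma}$ with one positive root, and $U_{m_1}(r)=U_\infty(r)$ to a quadratic in $r^{(2+\sigma)/2}$ whose discriminant is positive precisely because $(N+\sigma)(N-2)>(N-2)^2$, giving two positive roots. The main obstacle throughout is the excluded-fast-decay step and the spiral-versus-node distinction for $p>p_{\rm S}(\sigma)$: these are classical phase-plane facts for the supercritical Lane--Emden--Hénon equation but are the real technical content of the statement, the remainder being scaling bookkeeping and, at $p=p_{\rm S}(\sigma)$, elementary algebra.
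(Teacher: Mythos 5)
Your proposal is correct in substance but takes a genuinely different route from the paper. The paper's proof is essentially a two-line reduction: the substitution $\varphi(r)=U\bigl(\bigl(\tfrac{2+\sigma}{2}r\bigr)^{2/(2+\sigma)}\bigr)$ turns \eqref{eq:ellipticHenonradial} into the radial Lane--Emden equation $\varphi''+\tfrac{k-1}{r}\varphi'+\varphi^p=0$ in the (generally non-integer) dimension $k=2(N+\sigma)/(2+\sigma)$, for which $(k+2)/(k-2)=p_{\rm S}(\sigma)$, and every assertion is then quoted from \cite[Theorem~9.1]{QS19}, whose proof is purely ODE-theoretic and insensitive to $k$ being an integer. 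You instead re-derive the content of that theorem directly for the H\'enon equation via the Emden--Fowler substitution $U=r^{-\alpha}w(\ln r)$, $\alpha=(2+\sigma)/(p-1)$: your autonomous equation, the rest point $c_\infty$, the simplified discriminant $(N-2)^2-4p\,\alpha(N-2-\alpha)$ vanishing exactly at $p_{\rm JL}(\sigma)$, the reduction of the scaling law to a pure translation in $t$, and the critical-case algebra all check out. What the paper's route buys is brevity and a single citation (plus the implicit observation that the Sobolev and Joseph--Lundgren exponents transform correctly under the change of dimension); what yours buys is transparency about which phase-plane facts drive each claim --- the unstable manifold of $0$, the energy/LaSalle convergence to $c_\infty$, and the spiral-versus-node dichotomy at $p_{\rm JL}(\sigma)$.

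Two small caveats on your version. First, at $p=p_{\rm JL}(\sigma)$ the discriminant is zero rather than positive (a degenerate node); this does not affect non-oscillation, but the statement should read $D\ge 0$. Second, your assertion that for $p\ge p_{\rm JL}(\sigma)$ the trajectory approaches $c_\infty$ ``monotonically and from below'' is precisely the nontrivial claim $U_m<U_\infty$; the real-eigenvalue computation alone does not give it, and your parenthetical Sturm/Hardy comparison (non-oscillation of the linearization about $U_\infty$ iff $(N-2)^2\ge 4p\,c_\infty^{p-1}$) is the actual proof --- once $w_1<c_\infty$ is known, monotonicity of $w_1$ follows because $w_1''>0$ at any interior critical point below $c_\infty$, and then the ordering $U_{m_1}>U_{m_2}$ follows from the translation picture. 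With those points made explicit, and with the fast-decay exclusion and the no-finite-zero dichotomy imported from the cited references as you indicate, the argument is complete.
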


\begin{proof}
Using the transformation:
\[
\varphi(r) = U\left(\left(\frac{2+\sigma}{2}r\right)^\frac{2}{2+\sigma}\right),
\]
problem~\eqref{eq:ellipticHenonradial} becomes
\[
\varphi'' + \frac{k-1}{r} \varphi' + \varphi^p =0, \quad r\in(0,\infty), \qquad \varphi'(0)=0,
\]
where $k= 2(N+\sigma)/(2+\sigma)$ (see {\it e.g.}, \cite[(2.2)]{Korman2020}).
Note that $k>2$ is equivalent to $N>2$ and 
${(k+2)/(k-2)} = p_{\rm S}(\sigma)$ holds.
From this point onwards, it can be shown exactly in the same way as for \cite[Theorem~9.1]{QS19}.
\end{proof}

\section{Threshold solutions}
\subsection{Definition of threshold solutions}
In this section,
we introduce the notion of threshold solution
and prove Theorem~\ref{Theorem:A}.
Most of the arguments, except for the proof of Theorem~{\rm \ref{Theorem:B}}, are similar to that of \cite[Section~3]{GS11}. 
However, some of the arguments of \cite{GS11} are restricted to $N=3$. 
We dispense with this technical assumption 
by applying Proposition \ref{Proposition:Phan13}. See Remark \ref{Remark:GS11} below for the details.

Define
\begin{equation*}
\begin{split}
& X:= \{g \in C(\overline{B_R}) \cap H^1_0(B_R);
g\ge 0 \,\, \mbox{and} \,\,g \,\, \mbox{is radially symmetric}\},\\
& A := \{u_0 \in X; u(\cdot,t;u_0) \,\, \mbox{is a global-in-time solution and} \\
&\qquad\qquad\qquad\qquad\qquad\qquad\qquad\qquad\lim_{t\to\infty} \|u(\cdot, t,u_0)\|_{L^\infty(B_R)}=0\}.
\end{split}
\end{equation*}
Let $g \in X$ and $\mu>0$,  and set $u_0 = \mu g$.
It follows from the solvability theory (see {\it e.g.}, \cite{Wang93}) that if $\mu>0$ is small enough, the solution $u(x,t; \mu g)$ exists globally-in-time
and $u(x,t; \mu g) \to 0$ as $t\to \infty$, {\it i.e.}, $\mu g\in A$.
On the other hand, it follows from Lemma~\ref{Lemma:Kaplan} below that if $\mu >0$ is large enough, the solution $u(x,t; \mu g)$ blows up in finite time.
Now, we can define 
\[
\mu^*= \mu^*(g):= \sup\{\mu>0; \mu g \in A\} \in (0,\infty).
\]
We call the solution $u(x,t; \mu^*g)$ {\it the threshold solution}.
In the following, the function $u^*(x,t)$ stands for the threshold solution $u(x,t; \mu^*g)$.

\subsection{Properties of threshold solutions}
In this subsection we collect properties of  threshold solutions.
Let $\lambda_1>0$ be the first eigenvalue of the Dirichlet Laplacian $-\Delta$ on $B_R$ and  $\phi_1>0$ be the corresponding eigenfunction such that $\|\phi_1\|_{L^1(B_R)}=1$.
First, we obtain a necessary condition for the global-in-time solvability for problem~\eqref{eq:Henon}, which is shown by the so-called Kaplan's method.
\begin{lemma}
\label{Lemma:Kaplan}
Assume $N\ge1$, $p>1+\sigma/N$, and $\sigma>0$. 
If problem~\eqref{eq:Henon} possesses a nonnegative global-in-time solution, then
\begin{equation*}
\int_{B_R} u_0(x) \phi_1(x) \, \di x \le \lambda_1^\frac{1}{p-1} \int_{B_R} |x|^{-\frac{\sigma}{p-1}} \, \di x.
\end{equation*}
\end{lemma}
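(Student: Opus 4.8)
The plan is to use Kaplan's eigenfunction method. First I would multiply the equation in \eqref{eq:Henon} by the first eigenfunction $\phi_1$ and integrate over $B_R$. Writing
\[
y(t) := \int_{B_R} u(x,t)\phi_1(x)\,\di x,
\]
and using that $-\Delta \phi_1 = \lambda_1 \phi_1$ together with $\phi_1 = 0$ on $\partial B_R$ and the Dirichlet boundary condition $u = 0$ on $\partial B_R$, two integrations by parts give $\int_{B_R}\Delta u \,\phi_1\,\di x = -\lambda_1 y(t)$ (the boundary terms vanish; one uses $\partial_\nu \phi_1 \le 0$ and $u\ge 0$, or simply notes both $u$ and $\phi_1$ vanish on $\partial B_R$ so Green's identity applies cleanly). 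Hence $y$ satisfies the differential inequality
\[
y'(t) = -\lambda_1 y(t) + \int_{B_R} |x|^\sigma u(x,t)^p \phi_1(x)\,\di x .
\]

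Next I would bound the nonlinear term below by a power of $y(t)$ via a weighted Jensen (Hölder) inequality. Since $\phi_1\,\di x$ is not a probability measure but $\|\phi_1\|_{L^1}=1$, and since the weight $|x|^\sigma$ must be absorbed, I write, by Hölder's inequality with exponents $p$ and $p' = p/(p-1)$,
\[
y(t) = \int_{B_R} u\,\phi_1\,\di x = \int_{B_R} \left(|x|^{\sigma/p} u\,\phi_1^{1/p}\right)\left(|x|^{-\sigma/p}\phi_1^{1/p'}\right)\di x
\le \left(\int_{B_R} |x|^\sigma u^p \phi_1\,\di x\right)^{1/p}\left(\int_{B_R} |x|^{-\sigma/(p-1)}\phi_1\,\di x\right)^{1/p'}.
\]
This is precisely where the hypothesis $p > 1 + \sigma/N$ enters: it guarantees $\sigma/(p-1) < N$, so that $|x|^{-\sigma/(p-1)}$ is locally integrable (and hence integrable against the bounded $\phi_1$ over $B_R$), so the constant $\Lambda := \int_{B_R} |x|^{-\sigma/(p-1)}\phi_1\,\di x$ is finite. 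Rearranging gives
\[
\int_{B_R} |x|^\sigma u^p \phi_1\,\di x \ge \Lambda^{-(p-1)}\, y(t)^p,
\]
so that $y' \ge -\lambda_1 y + \Lambda^{-(p-1)} y^p$.

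Finally I would argue that a global nonnegative solution forces $y(0) \le (\lambda_1 \Lambda^{p-1})^{1/(p-1)} = \lambda_1^{1/(p-1)}\Lambda$. Indeed if $y(0)$ strictly exceeded this value, then $y' > 0$ at $t = 0$, and since the set $\{y > \lambda_1^{1/(p-1)}\Lambda\}$ is forward-invariant (the right-hand side $-\lambda_1 y + \Lambda^{-(p-1)}y^p$ is positive there), $y$ is increasing for all time, and a standard comparison with the autonomous ODE $z' = \tfrac12\Lambda^{-(p-1)}z^p$ (valid once $y$ is large enough that the $-\lambda_1 y$ term is dominated) shows $y$ blows up in finite time, contradicting global existence and boundedness of the integral of a global classical solution. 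Thus
\[
\int_{B_R} u_0\,\phi_1\,\di x = y(0) \le \lambda_1^{1/(p-1)}\int_{B_R} |x|^{-\sigma/(p-1)}\phi_1\,\di x \le \lambda_1^{1/(p-1)}\|\phi_1\|_{L^\infty}\int_{B_R}|x|^{-\sigma/(p-1)}\,\di x.
\]

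I do not anticipate a serious obstacle here; the one point requiring a little care is the justification that the forward-invariance/comparison argument really contradicts \emph{global} existence rather than merely unboundedness — but since we assume the global solution exists as a classical solution and (in the relevant application) stays bounded, finite-time blow-up of $y(t)$ is already a contradiction with $y$ being well-defined for all $t>0$. A second minor point is ensuring the constant in the statement matches: the cleanest route is to keep $\Lambda = \int_{B_R}|x|^{-\sigma/(p-1)}\phi_1\,\di x$ and note $\phi_1 \le \|\phi_1\|_{L^\infty(B_R)}$, though one should double-check the normalization convention the authors intend (the statement as written with $\int|x|^{-\sigma/(p-1)}\,\di x$ suggests they absorb $\|\phi_1\|_\infty$ or normalize differently, but the argument is unchanged up to that harmless constant).
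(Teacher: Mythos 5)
Your proposal is correct and is essentially the paper's own proof: Kaplan's eigenfunction method combined with a weighted H\"older inequality (with $p>1+\sigma/N$ guaranteeing integrability of $|x|^{-\sigma/(p-1)}$), leading to $y'\ge -\lambda_1 y + C y^p$ and finite-time blow-up of $y$ if the stated bound fails. The only difference is the harmless constant you already flag: H\"older naturally produces $\int_{B_R}|x|^{-\sigma/(p-1)}\phi_1\,\di x$ where the paper writes $\int_{B_R}|x|^{-\sigma/(p-1)}\,\di x$, a normalization discrepancy that affects neither argument nor any application of the lemma.
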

\begin{proof}
Let $u$ be a global-in-time solution of problem~\eqref{eq:Henon}.
Assume that 
\begin{equation}
\label{eq:LemKaplan1}
\int_{B_R}u_0(x) \phi_1(x) \, \di x >\lambda_1^\frac{1}{p-1} \int_{B_R} |x|^{-\frac{\sigma}{p-1}} \, \di x.
\end{equation}
By H\"{o}lder's inequality and assumption $p>1+\sigma/N$, we have
\[
 \int_{B_R} |x|^\sigma u^p \phi_1 \, \di x \ge 
 \left(\int_{B_R} |x|^{-\frac{\sigma}{p-1}} \, \di x \right)^{-(p-1)} \left(\int_{B_R} u \phi_1 \, \di x\right)^p
 >0.
\]
Multiplying \eqref{eq:Henon} by $\phi_1$, integrating over $B_R$, 
and using the above inequality, we obtain
\begin{equation*}
\begin{split}
\frac{\di}{\di t} \int_{B_R} u \phi_1 \, \di x
&\ge -\lambda_1 \int_{B_R} u \phi_1 \, \di x
+ C_1 \left(\int_{B_R} u \phi_1 \, \di x\right)^p\\
\end{split}
\end{equation*}
for all $t>0$, where 
$C_1 = \left(\int_{B_R} |x|^{-\frac{\sigma}{p-1}} \, \di x \right)^{-(p-1)}.$
Set $y(t)= \int_{B_R} u(x,t) \phi_1(x) \di x$. 
The above inequality then implies that
\[
y'(t) \ge -\lambda_1y(t) + C_1y(t)^p, \quad t>0.
\]
We see from \eqref{eq:LemKaplan1} that $-\lambda_1y(0) + C_1y(0)^p>0$ and  this implies that $y(t)$ blows up in finite time,
which is a contradiction. Thus, the proof is complete.
\end{proof}

The sets $X$ and $A$ introduced above have the following property, which is proved in \cite[Lemma~3.2]{GS11} without 
restriction on $N$. 
\begin{lemma}
\label{Lemma:GS11}
The set $A$ is nonempty and relatively open in $X$.
\end{lemma}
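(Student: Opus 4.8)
The plan is to combine a small–data global–existence–with–decay statement with continuous dependence on the initial data, both of which are standard for \eqref{eq:Henon} precisely because $B_R$ is bounded, so that $0\le|x|^\sigma\le R^\sigma$ and the equation behaves like a semilinear heat equation with a bounded coefficient in front of $u^p$.

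\emph{Step 1 (small data).} First I would record that there is $\varepsilon_0=\varepsilon_0(N,p,\sigma,R)>0$ such that if $w_0\in X$ with $\|w_0\|_{L^\infty(B_R)}\le\varepsilon_0$, then $u(\cdot,t;w_0)$ is global and $\|u(\cdot,t;w_0)\|_{L^\infty(B_R)}\to0$ as $t\to\infty$; in fact $\|u(\cdot,t;w_0)\|_{L^\infty(B_R)}\le 2K\varepsilon_0\,e^{-\lambda_1 t}$ for a suitable constant $K$. This follows from the solvability theory cited in Section~3 (e.g.\ \cite{Wang93}), or directly from the Duhamel formula of Lemma~\ref{Lemma:CisI}, the bound $\|e^{t\Delta}\phi\|_{L^\infty(B_R)}\le K e^{-\lambda_1 t}\|\phi\|_{L^\infty(B_R)}$, the estimate $|x|^\sigma\le R^\sigma$, and a routine continuation (Gronwall) argument applied to $y(t)=e^{\lambda_1 t}\|u(\cdot,t;w_0)\|_{L^\infty(B_R)}$.

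\emph{Step 2 ($A\neq\emptyset$).} Pick any $g\in X$ with $g\not\equiv0$, for instance $g=\phi_1$, and choose $\mu>0$ so small that $\|\mu g\|_{L^\infty(B_R)}\le\varepsilon_0$. By Step~1, $\mu g\in A$, so $A$ is nonempty. (Since $\mu g\to0$ in $C(\overline{B_R})\cap H^1_0(B_R)$ as $\mu\to0$, this conclusion is unaffected by the precise choice of topology on $X$, provided it is at least as fine as the $L^\infty(B_R)$ topology.)

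\emph{Step 3 ($A$ relatively open).} Fix $u_0\in A$. Since $\|u(\cdot,t;u_0)\|_{L^\infty(B_R)}\to0$, there is $t_0>0$ with $\|u(\cdot,t_0;u_0)\|_{L^\infty(B_R)}<\varepsilon_0/2$. On $[0,t_0]$ the solution $u(\cdot,\cdot;u_0)$ is bounded, so by continuous dependence on initial data — which follows from Lemma~\ref{Lemma:CisI} together with a Gronwall estimate on any time interval on which the reference solution stays bounded, and the usual continuation argument — there is $\delta>0$ such that every $v_0\in X$ with $\|v_0-u_0\|_{L^\infty(B_R)}<\delta$ admits a solution on $[0,t_0]$ with $\|u(\cdot,t_0;v_0)-u(\cdot,t_0;u_0)\|_{L^\infty(B_R)}<\varepsilon_0/2$, whence $\|u(\cdot,t_0;v_0)\|_{L^\infty(B_R)}<\varepsilon_0$. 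Applying Step~1 with initial time $t_0$ shows that $u(\cdot,t;v_0)$ is global and tends to $0$ in $L^\infty(B_R)$; combined with existence on $[0,t_0]$, this yields $v_0\in A$. Hence the relatively open set $\{v_0\in X;\ \|v_0-u_0\|_{L^\infty(B_R)}<\delta\}$ is contained in $A$.

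\emph{Main obstacle.} The only delicate point is Step~3: one must rule out that data close to $u_0$ blow up before time $t_0$, i.e.\ one needs continuity of the solution map $v_0\mapsto u(\cdot,\cdot;v_0)$ into $C([0,t_0];L^\infty(B_R))$ near $u_0$. This is handled by local existence with a uniform time step depending only on the $L^\infty$–size of the data, together with the Duhamel/Gronwall estimate controlling the difference of two solutions as long as both remain bounded — but it is exactly the ingredient without which openness of $A$ would fail; everything else is bookkeeping.
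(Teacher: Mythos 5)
Your proposal is correct, but note that the paper itself does not prove this lemma: it simply cites \cite[Lemma~3.2]{GS11}, so there is no in-paper argument to compare against. What you wrote is the standard proof that the citation stands in for, and all three steps are sound here because $|x|^\sigma\le R^\sigma$ on $B_R$ makes the nonlinearity a bounded-coefficient, locally Lipschitz perturbation of the heat equation: the small-data decay via the Duhamel formula and the exponential bound $\|e^{t\Delta}\phi\|_{L^\infty(B_R)}\le Ke^{-\lambda_1 t}\|\phi\|_{L^\infty(B_R)}$, the choice of a small multiple of $\phi_1$ (or, even more cheaply, the observation that $0\in A$) for nonemptiness, and continuous dependence on $[0,t_0]$ followed by the small-data result restarted at $t_0$ for openness. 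Your caveat about the topology on $X$ is apt, since the paper never fixes one on $C(\overline{B_R})\cap H^1_0(B_R)$; openness in the $L^\infty$ topology is what your argument delivers and is what the threshold construction along the ray $\mu\mapsto\mu g$ actually uses, so nothing is lost.
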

Due to Lemma~\ref{Lemma:Kaplan}, a threshold solution $u^*$ is well-defined, {\it i.e.}, $\mu^* \in (0,\infty)$.
\begin{lemma}
Let $g\in X\setminus\{0\}$. Then $\mu^*:= \sup\{\mu>0; \mu g \in A\}\in (0,\infty)$.
\end{lemma}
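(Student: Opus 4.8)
The plan is to establish the two bounds $0<\mu^*<\infty$ separately, drawing on the two lemmas just stated. For the lower bound $\mu^*>0$, I would invoke the local solvability theory for problem~\eqref{eq:Henon} together with the smoothing property of the Dirichlet heat semigroup $e^{t\Delta}$ on $B_R$. Since $g\in X\subset C(\overline{B_R})$, the product $|x|^\sigma(\mu g)^p$ is bounded for fixed $\mu$, and standard fixed-point arguments in a suitable weighted space (as in \cite{Wang93}) show that for $\mu$ sufficiently small the solution $u(\cdot,t;\mu g)$ exists globally and decays to $0$ in $L^\infty(B_R)$; one can make this quantitative by comparing $u(\cdot,t;\mu g)$ with the solution of the linear problem $\partial_t\phi = \Delta\phi$ scaled appropriately, using that the nonlinear term is of higher order in $\mu$. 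Hence $\mu g\in A$ for all small $\mu>0$, which gives $\mu^*>0$. (Alternatively, since $g\neq 0$ and $g\in X$, the set $A$ being relatively open in $X$ by Lemma~\ref{Lemma:GS11} and containing $0$ — or at least containing a neighborhood of $0$ in the ray $\{\mu g\}$ — already forces $\mu^*>0$.)

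For the upper bound $\mu^*<\infty$, I would apply Lemma~\ref{Lemma:Kaplan}: any nonnegative global-in-time solution must satisfy
\[
\int_{B_R} u_0(x)\phi_1(x)\,\di x \le \lambda_1^{\frac{1}{p-1}}\int_{B_R}|x|^{-\frac{\sigma}{p-1}}\,\di x.
\]
Applying this with $u_0=\mu g$, and noting that $g\in X\setminus\{0\}$ is nonnegative, continuous, and radially symmetric so that $\int_{B_R} g\phi_1\,\di x>0$ (as $\phi_1>0$ in $B_R$ and $g\not\equiv 0$), we get
\[
\mu\int_{B_R} g\phi_1\,\di x \le \lambda_1^{\frac{1}{p-1}}\int_{B_R}|x|^{-\frac{\sigma}{p-1}}\,\di x,
\]
whenever $\mu g\in A$. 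Since $p>1+\sigma/N$ guarantees $\sigma/(p-1)<N$, the right-hand integral is finite, so this forces
\[
\mu \le \frac{\lambda_1^{\frac{1}{p-1}}\int_{B_R}|x|^{-\frac{\sigma}{p-1}}\,\di x}{\int_{B_R} g\phi_1\,\di x}<\infty,
\]
and therefore $\mu^*<\infty$. Combining the two bounds yields $\mu^*\in(0,\infty)$, as claimed.

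I do not anticipate a genuine obstacle here; the statement is essentially a packaging of Lemma~\ref{Lemma:Kaplan} (for finiteness) and the local well-posedness/smoothing theory (for positivity), together with the elementary observation that a nonzero nonnegative continuous function on $B_R$ has strictly positive pairing with the first eigenfunction. The only point requiring a modicum of care is verifying that $\int_{B_R}|x|^{-\sigma/(p-1)}\,\di x<\infty$, which is exactly where the standing hypothesis $p>1+\sigma/N$ enters, and that the local existence theory genuinely produces \emph{globally decaying} solutions for small data rather than merely short-time solutions — but this is covered by the references cited in the construction of $A$.
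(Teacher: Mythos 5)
Your proposal is correct and follows essentially the same route as the paper: the positivity $\mu^*>0$ comes from the small-data global existence theory (cited to \cite{Wang93} when the set $A$ is introduced), and the finiteness comes from applying Lemma~\ref{Lemma:Kaplan} to $u_0=\mu g$ and using $\int_{B_R}g\phi_1\,\di x>0$ together with the integrability of $|x|^{-\sigma/(p-1)}$ guaranteed by $p>1+\sigma/N$. No gaps.
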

\begin{proof}
Denote $u_\mu (x,t) = u(x,t;\mu g)$.
Since  $u_\mu$ is a global-in-time solution of problem~\eqref{eq:Henon} for $\mu \in (0, \mu^*)$, by Lemma~\ref{Lemma:Kaplan} we have
\[
\mu \int_{B_R} g(x) \phi_1(x) \, \di x \le 
\lambda_1^\frac{1}{p-1} \int_{B_R} |x|^{-\frac{\sigma}{p-1}} \, \di x < \infty.
\]
The left-hand side of the above inequality tends to infinity as $\mu\to \infty$ and this implies that $\mu^*< \infty$ must hold.
Thus, the proof is complete.
\end{proof}

For threshold solution $u^*$, the singularity and decay estimate \eqref{eq:Phan13} can be improved as follows, which is a key to prove Theorem~\ref{Theorem:A}.  
\begin{proposition}
\label{Proposition:SDE}
Assume $N\ge 1$, $p>1+\sigma/N$, and $\sigma>0$.
There exists $C^*>0$ depending on $\mu^*$ and $g$  such that
\begin{equation}
\label{eq:SDE}
|x|^\frac{\sigma}{p-1}u^*(x,t) \le C^*(1+t^{-\frac{1}{p-1}}+|x|^{-\frac{2}{p-1}})
\end{equation}
for all $0<|x|<R$ and $t\in(0,T^*)$, where $T^*\in(0,\infty]$ denotes the maximal existence time of $u^*$.
\end{proposition}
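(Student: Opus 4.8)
The plan is to derive \eqref{eq:SDE} by approximating the threshold solution $u^{*}=u(\cdot,\cdot;\mu^{*}g)$ from below by the globally defined solutions $u_{\mu}:=u(\cdot,\cdot;\mu g)$ with $0<\mu<\mu^{*}$, applying to each $u_{\mu}$ the singularity--decay estimate of Proposition~\ref{Proposition:Phan13} on $B_{R}\times(0,T)$ with $T$ arbitrarily large, then letting $T\to\infty$ to discard the transient term $(T-t)^{-1/(p-1)}$, and finally letting $\mu\nearrow\mu^{*}$. The gain over applying Proposition~\ref{Proposition:Phan13} directly to $u^{*}$ is precisely that each $u_{\mu}$ is global in time, so the estimate is available on arbitrarily long intervals; applied to $u^{*}$ it would be available only on $(0,T^{*})$ and would retain the unwanted blow-up term.

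First I would fix $\mu\in(0,\mu^{*})$; by the definitions of $A$ and $\mu^{*}$, $u_{\mu}$ is a nonnegative radially symmetric classical solution of \eqref{eq:Henon} on $B_{R}\times(0,\infty)$. Applying Proposition~\ref{Proposition:Phan13} on $B_{R}\times(0,T)$ (its constant depending only on $N,p,\sigma$) and letting $T\to\infty$ for fixed $(x,t)$ gives
\[
|x|^{\frac{\sigma}{p-1}}u_{\mu}(x,t)\le C\bigl(t^{-\frac{1}{p-1}}+|x|^{-\frac{2}{p-1}}\bigr),\qquad 0<|x|<R/2,\ \ t>0,
\]
with $C=C(N,p,\sigma)$ independent of $\mu$. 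On $\{R/2\le|x|<R\}$ the weights $|x|^{\sigma/(p-1)}$ and $|x|^{-2/(p-1)}$ are comparable to positive constants, so there it suffices to bound $u_{\mu}$ by $C^{*}(1+t^{-1/(p-1)})$ uniformly in $\mu$. For $0<t\le t_{1}$, with $t_{1}=t_{1}(N,p,\sigma,R,\mu^{*}\|g\|_{L^{\infty}})>0$, the standard local a priori estimate for the semilinear heat equation — applied to the mild formulation of Lemma~\ref{Lemma:CisI} and using $\|u_{\mu}(\cdot,0)\|_{L^{\infty}}=\mu\|g\|_{L^{\infty}}\le\mu^{*}\|g\|_{L^{\infty}}$ — gives $\|u_{\mu}(\cdot,t)\|_{L^{\infty}(B_{R})}\le 2\mu^{*}\|g\|_{L^{\infty}}$, which covers this range of $t$ for every $x$. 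For $t\ge t_{1}$ the bound on $\{R/2\le|x|<R\}$ follows from the extension of \eqref{eq:Phan13} to $0<|x|<R$ for global-in-time solutions recorded in the Remark after Proposition~\ref{Proposition:Phan13}; alternatively one compares $u_{\mu}$ on the annulus $\{R/2<|x|<R\}$ for $t\ge t_{1}$ with a supersolution of $\partial_{t}\psi=\Delta\psi+R^{\sigma}\psi^{p}$ built from the bound on $\{|x|=R/2\}$ just obtained, the homogeneous condition on $\{|x|=R\}$, and the a priori bound at $t=t_{1}$. Collecting the two ranges,
\[
|x|^{\frac{\sigma}{p-1}}u_{\mu}(x,t)\le C^{*}\bigl(1+t^{-\frac{1}{p-1}}+|x|^{-\frac{2}{p-1}}\bigr),\qquad 0<|x|<R,\ \ t>0,
\]
with $C^{*}$ depending on $N,p,\sigma,R$ and, through $t_{1}$ and $\mu^{*}\|g\|_{L^{\infty}}$, on $\mu^{*}$ and $g$, but not on $\mu$.

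It then remains to let $\mu\nearrow\mu^{*}$. Since $g\ge0$, the comparison principle shows that $\mu\mapsto u_{\mu}(x,t)$ is nondecreasing and that $u_{\mu}\le u^{*}$ on $B_{R}\times(0,T^{*})$, so $\underline{u}:=\lim_{\mu\nearrow\mu^{*}}u_{\mu}$ exists and is finite on $B_{R}\times(0,T^{*})$. By interior parabolic estimates the convergence is locally uniform in $C^{2,1}$, so $\underline{u}$ solves \eqref{eq:Henon}, and since $u_{\mu}(\cdot,0)=\mu g\to\mu^{*}g$ uniformly its initial datum is $\mu^{*}g$; by uniqueness $\underline{u}\equiv u^{*}$ on $B_{R}\times(0,T^{*})$. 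Letting $\mu\nearrow\mu^{*}$ in the previous display for fixed $(x,t)$ with $0<|x|<R$ and $0<t<T^{*}$ yields \eqref{eq:SDE}.

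The step I expect to present the main difficulty is the passage from the range $0<|x|<R/2$ in which Proposition~\ref{Proposition:Phan13} is stated up to the boundary $|x|=R$: although the singular weights are harmless there, one still needs an $L^{\infty}$ bound on $u_{\mu}$ near $\partial B_{R}$ uniform in $\mu$, and producing it genuinely uses both that the approximants are global in time — so that no grow-up is hidden near the boundary — and the uniform initial bound $\|\mu g\|_{L^{\infty}}\le\mu^{*}\|g\|_{L^{\infty}}$. It is precisely this boundary estimate that forces $C^{*}$ to depend on $\mu^{*}$ and $g$; with it in hand the remaining steps are routine.
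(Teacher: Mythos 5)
Your treatment of the region $0<|x|<R/2$ (applying Proposition~\ref{Proposition:Phan13} to the global approximants $u_\mu$ on $(0,T)$ and letting $T\to\infty$, then $\mu\nearrow\mu^*$) coincides with the paper's argument, and the final limit $\mu\nearrow\mu^*$ is handled correctly. The gap is exactly where you anticipate difficulty: the uniform-in-$\mu$, uniform-in-$t$ $L^\infty$ bound on the annulus $\{R/2\le|x|<R\}$. Your alternative (a) is circular: the Remark following Proposition~\ref{Proposition:Phan13} explicitly defers to Proposition~\ref{Proposition:SDE} for its justification (``cf.\ Proposition~\ref{Proposition:SDE} below''), and \cite[Theorem~1.3]{Phan13} as invoked in the paper only gives the estimate on $0<|x|<R/2$; the extension up to $\partial B_R$ is precisely what has to be proved here. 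Your alternative (b) does not work either: a bounded (say stationary) supersolution of $\partial_t\psi=\Delta\psi+R^\sigma\psi^p$ on the annulus lying above the value $C'$ on $\{|x|=R/2\}$ and above the initial bound $2\mu^*\|g\|_{L^\infty}$ at $t=t_1$ need not exist when these constants are large --- for a superlinear nonlinearity the elliptic inequality $-\Delta\Psi\ge R^\sigma\Psi^p$ with large boundary data is generally unsolvable by bounded functions, and in the supercritical range global solutions can be unbounded, so ``global in time'' does not by itself exclude grow-up concentrated near the boundary. The uniformity in $\mu$ as $\mu\nearrow\mu^*$ is the whole point, and comparison alone cannot deliver it.

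The paper closes this gap with a genuinely different mechanism: it argues by contradiction, taking $\mu_k\nearrow\mu^*$, $x_k$ in the annulus and $t_k\nearrow T^*$ with $M_k:=u_k(x_k,t_k)\to\infty$, rescales by $\lambda_k=M_k^{-(p-1)/2}$, and uses the energy identity \eqref{eq:I1} to bound $\int_0^{T^*}\!\!\int_{B_R}|\partial_t u_k|^2\,\di x\,\di t$ by $J[\mu_k g]\le C(\mu^*)^2\|\nabla g\|_{L^2}^2$ uniformly in $k$; after rescaling this forces the blow-up limit to be time-independent, hence a bounded positive steady state of the one-dimensional Lane--Emden equation on $\R$ or on a half-line, contradicting the Liouville theorems of \cite{GS81}. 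This is where the dependence of $C^*$ on $\mu^*$ and $g$ actually enters (through $J[\mu g]$), not through the local-in-time bound. To repair your proof you would need to import this rescaling-plus-energy (or some equivalent Liouville-type) argument; the comparison-principle route cannot substitute for it.
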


\begin{remark}
\label{Remark:GS11}
In \cite{GS11}  threshold solution $u^*$ is constructed under the assumption that there is no blow-up point on $\partial B_R$,  
which is shown for $N=3$. 
As a consequence, the solution $u^*$ blows up at $t=T<\infty$, but can be continued after the blow-up time $T$ 
as a global $L^1$-solution, which in particular implies that the origin must be a blow-up point. 
On the other hand, Proposition~{\rm \ref{Proposition:SDE}} implies the boundedness of threshold solutions away from the origin. 
Hence, we do not require any continuation of solutions after blow-up time. 
It is proved in Proposition~{\rm \ref{Proposition:TSblowsup}} below that the threshold solutions blow up in a finite time $T$ 
under the assumptions $N\geq 3$ and $p_{\rm S}(\sigma) <p < p_{\rm JL}(\sigma)$. 
The continuation of $u^*$ after $t=T$ then becomes completely an independent issue. It will be discussed in Appendix A. 
\end{remark}

In order to Proposition \ref{Proposition:SDE},
we introduce an energy functional.
For $w \in H^1_0(B_R)\cap L^{p+1}(B_R)$, define
\begin{equation}
 J[w] := \frac{1}{2} \int_{B_R} |\nabla w(x)|^{2} \, \di x -\frac{1}{p+1} \int_{B_R} |x|^\sigma |w(x)|^{p+1} \, \di x.
\label{energy:J}
\end{equation}
If $u$ is a classical solution of problem~\eqref{eq:Henon},
simple computations yield
\begin{equation}
\label{eq:I1}
\begin{split}
 &\frac{\di}{\di t} J[u(\cdot,t)] = 
-\int_{B_R} |\partial_t u(x,t)|^2 \, \di x \le 0, \, \\
& \frac{1}{2} \frac{\di}{\di t} \int_{B_R} u(x,t)^2 \, \di x
= -2 J[u(\cdot, t)] + \frac{p-1}{p+1} \int_{B_R} |x|^\sigma u(x,t)^{p+1} \, \di x
\end{split}
\end{equation}
for all $t\in (0,T)$.

\begin{proof}[Proof of Proposition~{\rm \ref{Proposition:SDE}}]
In what follows, $C' = C'(N,p,\sigma, R)$ denotes a generic positive constant which may vary from a line to another.
Since $u_\mu$ is a nonnegative radially symmetric global-in-time solution of problem~\eqref{eq:Henon} for $\mu \in (0,\mu^*)$,
it follows from Proposition~\ref{Proposition:Phan13} that for any $T\in (0,T^*)$,
\begin{equation}
\label{eq:A0}
\begin{split}
|x|^\frac{\sigma}{p-1} u_\mu (r,t) 
&\le C' (t^{-\frac{1}{p-1}} + (T+1 -t)^{-\frac{1}{p-1}} + |x|^{-\frac{2}{p-1}})\\
&\le C' (1+ t^{-\frac{1}{p-1}}+|x|^{-\frac{2}{p-1}})\\
\end{split}
\end{equation}
for all $0<|x|<R/2$ and $t\in (0,T)$.
Since $\mu\in (0,\mu^*)$ and $T\in (0,T^*)$ are arbitrary and $C'$ are independent of $\mu$ and $T$, letting $\mu \nearrow \mu^*$, we get \eqref{eq:SDE} for $0<|x|<R/2$ and $t\in (0,T^*)$.

We shall check that $u^*$ satisfies \eqref{eq:SDE} for all $R/4< |x| < R$ and $t\in (0,T^*)$.
To prove this, we extend the arguments of \cites{PQS2007,Phan13}.
Define
\[
A_R := \left\{x\in \mathbb{R}^N; \frac{R}{4}< |x| < R\right\}.
\]
Assume that \eqref{eq:SDE} does not hold for $u^*$. Then there exist $\mu_k \in (0,\mu^*)$, $x_k\in \{R/4\le |x|<R\}$, and $t_k\in (0,T^*)$ such that $\mu_k \nearrow \mu^*$, $t_k \nearrow T^*$, and solutions $u_k := u_{\mu_k}$ satisfy
\[
M_k := u_k(x_k, t_k) = \sup\{u_k(x,t); x\in A_R, t\in [0,t_k]\} \to \infty \quad \mbox{as} \quad k\to \infty.
\]
Note that $M_k <\infty$ for all $k=1,2,\cdots$, since $u_k(\cdot,0) = \mu_k g \in C(\overline{B_R})$.
Since $\overline{A_R}$ is compact in $\mathbb{R}^N$,  we may assume that there exists $x_0 \in \overline{A_R}$ such that $x_k \to x_0$. 
Set $r_k := |x_k|$ and $r_0 := |x_0|$.
We divide the proof into several cases.

\vspace{3pt}
\noindent
\underline{{\bf Case 1}: $r_0\in[R/4, R)$.} Let $\lambda_k := M_k^{-(p-1)/2}$, we rescale by 
\begin{equation*}
\begin{split}
&v_k (\rho,s) := \lambda_k^\frac{2}{p-1} u_k(r_k+ \lambda_k \rho, t_k + \lambda_k^2 s),\\
&\qquad\qquad\qquad\qquad \mbox{for} \quad  (\rho,s) \in Q_k :=\left(-\frac{r_k}{2\lambda_k}, \frac{R-r_k}{\lambda_k}\right) \times \left(-\frac{t_k}{\lambda_k^2}, 0\right).
\end{split}
\end{equation*}
By the definition of $v_k$, it follows that $0\le v_k \le 1 = v_k(0,0)$ and $v_k$ solves the equation
\[
\partial_s v -\partial_{\rho\rho} v - \frac{N-1}{\rho + r_k/\lambda_k}\partial_{\rho}v = |r_k + \lambda_k \rho|^\sigma v^p, \quad (\rho,s) \in Q_k.
\]
Since $v_k$ is uniformly bounded in $Q_k$, after extracting a subsequence, we may 
assume that the limit $\lim_{k\to\infty} v_k(\rho,s) =: v(\rho,s)$ exists 
and the convergence is locally uniform in $\R^N \times (-\infty, 0)$ up to 
second derivatives. 
Since $r_k \to r_0\in [R/4,R)$ and $\lambda_k \to 0$ as $k\to \infty$, we see that $v= v(\rho,s)$ satisfies
\begin{equation}
\label{eq:A1}
\partial_sv -\partial_{\rho\rho} v  = r_0^\sigma v^p, \quad (\rho,s) \in \mathbb{R} \times \left(-\infty, 0\right)
\,\,\text{ and }\,\,
v(0,0) =1.
\end{equation}
Since $\lambda_k \to 0$ as $k\to \infty$, we see from \eqref{eq:I1} that
\begin{equation}
\label{eq:A2}
\begin{split}
\int_{-\frac{t_k}{\lambda_k^2}}^0\int_{-\frac{r_k}{2\lambda_k}}^{\frac{R-r_k}{\lambda_k}} |\partial_s v_k|^2  \, \di \rho\di s
&= \lambda_k^{\frac{4}{p-1}+4} \int_{-\frac{t_k}{\lambda_k^2}}^0\int_{-\frac{r_k}{2\lambda_k}}^{\frac{R-r_k}{\lambda_k}} |\partial_t u_k(r_k + \lambda_k \rho, t_k + \lambda_k^2 s)|^2  \, \di \rho \di s\\
&=\lambda_k^{\frac{4}{p-1}+1}\int_{0}^{t_k}\int_{\frac{r_k}{2}}^{R} |\partial_t u_k(r,t)|^2  \, \di r \di t\\
&\le C'R^{1-N}\lambda_k^{\frac{4}{p-1}+1}\int_{0}^{t_k}\int_{\frac{r_k}{2}}^{R} |\partial_t u_k(r,t)|^2 r^{N-1}  \, \di r \di t\\
&\le   C'R^{1-N}\lambda_k^{\frac{4}{p-1}+1}\int_{0}^{T^*}\int_{B_R} |\partial_t u_k(x,t)|^2   \, \di x \di t\\
&= -C'R^{1-N}\lambda_k^{\frac{4}{p-1}+1}\int_{0}^{T^*} \frac{\di }{\di t}J[u_k(\cdot,t)] \, \di t\\
&= C'R^{1-N}\lambda_k^{\frac{4}{p-1}+1}[J[u_k(\cdot,0)]- J[u_k(\cdot,T^*)]]\\
& \le C'R^{1-N} (\mu^*)^2 \lambda_k^{\frac{4}{p-1}+1}\int_{B_R} |\nabla g|^2 \, \di x
\to 0 \quad \mbox{as} \quad k\to \infty.
\end{split}
\end{equation}
Since $\partial_s v_k \to \partial_s v$ uniformly in every compact subset 
of $\mathbb{R} \times \left(-\infty, 0\right)$, 
this implies that $\partial_sv\equiv0$ in
$\mathbb{R} \times \left(-\infty, 0\right)$ and $v$ is a nontrivial steady state of equation \eqref{eq:A1}.
This, however, contradicts the Liouville-type theorem for the Lane--Emden equation with $N=1$ (see {\it e.g.}, \cite{GS81}). 

\vspace{3pt}
\noindent
\underline{{\bf Case 2}: $r_0=R$.} There are  two possibilities:
\[
{\rm (i)} \quad \frac{R-r_k}{\lambda_k} \to \infty \qquad \mbox{or} \qquad {\rm (ii)} \quad\frac{R-r_k}{\lambda_k} \to c \in (0,\infty).
\]
In the case of (i), the same scaling as in Case 1 leads to a contradiction as in Case 1.
In the case of (ii), the function $v$ as in Case 1 solves the equation
\begin{equation*}
\partial_sv- \partial_{\rho\rho}v  = r_0^\sigma v^p, \quad (\rho,s) \in (-\infty,c) \times \left(-\infty, 0\right),
\end{equation*}
and
\[
v(0,0) = 1, \qquad v(c,0) = 0.
\]
A similar argument to \eqref{eq:A2} and the Liouville-type theorem in half-space for Lane--Emden equation with $N=1$ 
lead to a contradiction, whence: 
$\limsup_{k\to \infty} M_k < \infty$
and 
\[
u^*(x,t) \le C' \quad \mbox{for} \quad (x,t) \in A_R \times (0,T^*), 
\]
which yields 
\[
|x|^{\frac{\sigma}{p-1}} u^*(x,t) \le C' (1+ t^{-\frac{1}{p-1}}+ |x|^{-\frac{2}{p-1}}) \quad \mbox{for} \quad (x,t) \in A_R \times (0,T^*).
\]
Combining this estimate and \eqref{eq:A0}, we arrive at \eqref{eq:SDE}. 
The proof is complete.
\end{proof}

For threshold solution $u^*$, there are three possibilities:
\begin{itemize}
   \item[(GB)] $u^*$ exists globally-in-time and is uniformly bounded;
    \item[(G)]  $u^*$ grows up, {\it i.e.}, $\|u^*(\cdot, t)\|_{L^\infty(B_R)}< \infty$ for all $t>0$ and
    \[
    \limsup_{t\to \infty}\|u^*(\cdot, t)\|_{L^\infty(B_R)} = \infty;
    \]
    \item[(B)]  $u^*$ blows up in  finite time.
\end{itemize}
In Proposition~\ref{Proposition:TSblowsup}, we show that only case~(B) can occur in the case of $p_{\rm S}(\sigma)< p< p_{\rm JL}(\sigma)$. 
We note that the classification of possible behaviors of $u^*$ for general $p$ will be discussed in Section~5.

\begin{proposition}
\label{Proposition:TSblowsup}
Assume $N\ge3$, $p_{\rm S}(\sigma)< p< p_{\rm JL}(\sigma)$, and $\sigma>0$.
Then  threshold solution $u^*$ blows up in  finite time.
\end{proposition}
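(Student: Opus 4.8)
The plan is to rule out cases (GB) and (G) for the threshold solution $u^*$, leaving only case (B). The main tool will be the energy functional $J$ defined in \eqref{energy:J} together with the standard dichotomy for global semilinear parabolic flows. First I would argue that if $u^*$ is global-in-time, then $J[u^*(\cdot,t)]$ is nonincreasing by \eqref{eq:I1}, so it has a limit $J_\infty \in [-\infty,\infty)$ as $t\to\infty$; moreover, by Proposition~\ref{Proposition:SDE}, $u^*$ stays bounded away from the origin, so any loss of compactness in an $\omega$-limit can only concentrate at $x=0$. In case (GB) one shows by parabolic regularity (interior estimates plus the boundary condition on $\partial B_R$) that the orbit $\{u^*(\cdot,t)\}_{t\ge1}$ is relatively compact in, say, $C(\overline{B_R})$, so its $\omega$-limit set is nonempty and consists of equilibria of \eqref{eq:Henon} — radially symmetric, nonnegative, bounded solutions of $-\Delta w = |x|^\sigma w^p$ in $B_R$ with $w=0$ on $\partial B_R$. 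Any such nonzero equilibrium $w$ would satisfy $u_0^* = \mu^* g$ lies "on the boundary" of the basin of $0$, but one can perturb: since $A$ is relatively open in $X$ (Lemma~\ref{Lemma:GS11}) and, by continuous dependence, $u(\cdot,t;\mu g)\to 0$ uniformly for $\mu$ slightly below a convergent orbit's data, one derives a contradiction with the maximality of $\mu^*$ — that is, $u^*$ cannot converge to $0$, and it cannot converge to a nonzero steady state either, because then slightly larger $\mu$ would also stay global (stability considerations / the fact that $w$ would be a stable equilibrium or that small positive perturbations above it are absorbed), again contradicting $\mu^*=\sup\{\mu:\mu g\in A\}$ being the threshold. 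The cleanest route for (GB): if the $\omega$-limit contains a nonzero equilibrium $w$, then since $\mu^*g \notin A$ (as $\mu^* = \sup$ and $A$ open, $\mu^* g \notin A$), $u^*$ does not decay to $0$, so $\omega(u^*)\ne\{0\}$; but by the energy structure $\omega(u^*)$ is a single equilibrium, so $J[u^*(\cdot,t)]\to J[w]>0$, which with $\int_0^\infty\!\int_{B_R}|\partial_t u^*|^2 = J[u_0^*]-J[w]<\infty$ and the convexity/instability of the flow near $w$ (the linearization at $w$, combined with the fact that threshold solutions sit exactly on the stable manifold) forces a contradiction: data just below $\mu^*g$ would flow to $0$ (so $\mu^*g\in\overline A$, fine) but data just above would also be global, contradicting $\mu^*<\infty$ being the threshold.

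For case (G), grow-up, the idea is that unbounded global orbits are incompatible with the a priori bound Proposition~\ref{Proposition:SDE} in the supercritical-but-subcritical-JL regime. Here is where the hypothesis $p_{\rm S}(\sigma) < p < p_{\rm JL}(\sigma)$ and Proposition~\ref{Proposition:SS} enter crucially: in that range $U_{m_1}$ and $U_\infty$ intersect infinitely many times, which is precisely the structural fact that prevents the construction of a global unbounded solution trapped below the singular steady state. Concretely, a growing-up solution would, after rescaling $v_k(y,s) = \lambda_k^{2/(p-1)} u^*(\lambda_k y, t_k+\lambda_k^2 s)$ with $\lambda_k\to 0$ chosen from the maximum (which by Proposition~\ref{Proposition:Phan13}–type estimates must occur near the origin), converge — using the energy-dissipation bound exactly as in \eqref{eq:A2} — to a nontrivial bounded radial steady state of $\partial_s v - \Delta v = |y|^\sigma v^p$ on $\mathbb{R}^N$, i.e. one of the functions $U_m$ of Proposition~\ref{Proposition:SS}, or possibly to the singular $U_\infty$ in a weak sense. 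One then runs an intersection-comparison argument: the zero number $z(u^*(\cdot,t) - U_m(\cdot))$ (respectively with $U_\infty$) is finite and nonincreasing in $t$, but the oscillatory intersection structure guaranteed by $p<p_{\rm JL}(\sigma)$ together with grow-up forces this zero number to be eventually constant and then to contradict the behavior of the rescaled limit. This is the same circle of ideas as in the threshold-solution analysis of \cite{GS11} for the pure power case; the new input is Proposition~\ref{Proposition:SDE}, which supplies the decay estimate away from the origin that \cite{GS11} had obtained only for $N=3$, so the rescaling is legitimate for all $N\ge 3$.

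I expect the main obstacle to be handling case (G), the grow-up alternative, rigorously: one must control where the maximum of $u^*(\cdot,t)$ is located as $t\to\infty$ (Proposition~\ref{Proposition:SDE} confines blow-up/grow-up concentration to a neighborhood of the origin, but one still needs a quantitative rescaling limit), and then convert the intersection-comparison with the family $\{U_m\}\cup\{U_\infty\}$ into a genuine contradiction. The zero-number machinery for the radial operator with the degenerate weight $|x|^\sigma$ requires care — in particular the transformation used in the proof of Proposition~\ref{Proposition:SS} (to $\varphi'' + \frac{k-1}{r}\varphi' + \varphi^p = 0$) is the natural device to transfer known zero-number results for Lane--Emden to the present equation. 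By contrast, eliminating case (GB) is comparatively soft: it follows from the openness of $A$ in $X$ (Lemma~\ref{Lemma:GS11}), the energy monotonicity \eqref{eq:I1}, and the definition of $\mu^*$ as a threshold, via the standard argument that a uniformly bounded global threshold orbit would have its $\omega$-limit a nonzero stable equilibrium, making nearby larger data also global and contradicting $\mu^* = \sup\{\mu : \mu g \in A\} < \infty$. So the bulk of the work, and the place where the precise exponent window $p_{\rm S}(\sigma)<p<p_{\rm JL}(\sigma)$ is indispensable, is in excluding grow-up.
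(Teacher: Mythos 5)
Your overall skeleton (rule out (GB), then (G), leaving (B)) matches the paper, but both halves of your argument are missing the decisive input, and as written neither can be completed.

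For (GB), you allow for the possibility that $\omega(u^*)$ contains a nonzero equilibrium and try to exclude it by ``stability considerations'' and an appeal to the threshold orbit lying ``on the stable manifold.'' This cannot work as stated: for subcritical $p$ threshold orbits genuinely do converge to nonzero steady states, so any correct argument must use $p>p_{\rm S}(\sigma)$ at exactly this point, and your sketch never does. The paper's step here is short and completely different: for $p>p_{\rm S}(\sigma)$ there are \emph{no} positive steady states of the Dirichlet problem in $B_R$ at all (Pohozaev-type nonexistence, \cite{Ni86}), so the Lyapunov structure \eqref{eq:I1} forces $\omega(\mu^* g)=\{0\}$, i.e.\ $\mu^* g\in A$, which contradicts the relative openness of $A$ (Lemma~\ref{Lemma:GS11}) and the definition of $\mu^*$. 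Without the nonexistence of steady states your case distinction collapses into an unproved instability claim.

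For (G), your plan --- rescale a grow-up solution about its maximum, pass to a bounded entire steady state via the energy dissipation bound, and then run intersection-comparison with $\{U_m\}\cup\{U_\infty\}$ --- is not the paper's argument and is not substantiated: you give no control on the location of the maximum or the time scales for a grow-up (as opposed to blow-up) orbit, and no mechanism by which the infinitely many intersections of $U_1$ with $U_\infty$ actually contradict zero-number monotonicity. The paper instead (i) first proves $\mathcal{Z}_{[0,R]}(U^*(\cdot,t)-U_\infty)\ge 2$ for all $t$, by showing that otherwise $U^*$ would be trapped below the shifted singular steady state $U_\infty(\cdot+r_0)$, which is a supersolution, forcing boundedness and then decay to $0$ (again via nonexistence of steady states), contradicting Lemma~\ref{Lemma:GS11}; and (ii) introduces the backward self-similar blow-up solutions $\Phi_m$ of \eqref{eq:BSSS} from \cite{FT00}, which exist precisely for $p_{\rm S}(\sigma)<p<p_{\rm JL}(\sigma)$. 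Choosing the blow-up time $T$ of $\Phi_m$ large makes $\mathcal{Z}_{[0,R]}(U^*(\cdot,t_1)-\Phi_m(\cdot,t_1))=1$, hence $\le 1$ for all later times by Proposition~\ref{Proposition:zeronumber}; but as $t\nearrow T$ the profile $\Phi_m(\cdot,T)=mr^{-(2+\sigma)/(p-1)}$ together with the two intersections of $U^*$ with $U_\infty$ forces at least two intersections with $\Phi_m$ --- a contradiction. The backward self-similar solution is the key object your proposal omits; it is also where the window $p_{\rm S}(\sigma)<p<p_{\rm JL}(\sigma)$ actually enters, rather than through the oscillation of $U_{m_1}$ against $U_\infty$ as you suggest.
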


\begin{proof}
Let us first prove that case~(GB) cannot occur.
Assume that $\|u^*\|_{L^\infty(B_R)}$ is uniformly bounded with respect to $t>0$.
By the standard dynamical system argument with Lyapunov functional $J$ (cf.~\eqref{eq:I1}), 
the $\omega$-limit set $\omega(\mu^* g)$ with respect to $C^2_{\rm loc}$ topology is included in the set of nonnegative steady states.
However, when $p>p_{\rm S}(\sigma)$, there exist no positive steady states (see {\it e.g.}, \cite{Ni86}).
Thus, $\omega(\mu^* g)= \{0\}$ and  this implies that $u^* \to 0$ as $t\to\infty$, that is, $\mu^* g \in A$.
Since $A$ is an open set in $X$, we can find $\mu > \mu^*$ such that $\mu g \in A$,
which is a contradiction.
Therefore, case~(GB) cannot occur.

Let us  prove that case~(G) cannot occur.
Set $U^*(r,t) = u^*(x,t)$, where $r= |x|$.
Suppose contrary that case~(G) occurs. 
We first claim that 
\begin{equation}
\label{eq:3.11}
\mathcal{Z}_{[0,R]}(U^*(\cdot,t) - U_\infty(\cdot)) \ge 2
\end{equation}
for all $t>0$.
Otherwise, there exists $t_0 >0$ such that $\mathcal{Z}(U^*(\cdot,t_0) - U_\infty(\cdot)) $ is either $1$ or $0$.
The former case implies that there exists $r^* \in [0,R]$ such that
\[
(U^*(r^*,t_0) - U_\infty(r^*))_r = U^*(r^*,t_0) - U_\infty(r^*) = 0.
\]
By Proposition~\ref{Proposition:zeronumber}~(iii) we have
\[
\mathcal{Z}_{[0,R]}(U^*(\cdot,t) - U_\infty(\cdot)) =0 \quad \mbox{for all} \quad t>t_0.
\]
Thus, the former case can be reduced to the latter case and 
since $U^*$ is smooth for all $t>0$, we may assume that
\begin{equation*}
U^*(r,t) < U_\infty(r,t) \quad \mbox{for all} \quad r\in[0,R], \,\, t\ge t_0. 
\end{equation*}
Thus, there exists a sufficiently small $r_0 >0$ such that
\[
U^*(r,t_0) \le  U_\infty(r+r_0) \quad \mbox{for all} \quad r\in [0,R].
\]
Since $U_\infty(\cdot)$ is monotone decreasing, we obtain 
\begin{equation*}
\begin{split}
0&= \partial_t U_\infty(r+ r_0) \\
&= (U_\infty)_{rr}(r+r_0) + \frac{N}{r+r_0} (U_\infty)_r(r+r_0) + (r+r_0)^\sigma U_\infty(r+r_0)^p\\
&> (U_\infty)_{rr}(r+r_0) + \frac{N}{r} (U_\infty)_r(r+r_0) + (r+r_0)^\sigma U_\infty(r+r_0)^p.\\
\end{split}
\end{equation*}
Thus, $U_\infty(\cdot+r_0)$ is a supersolution of \eqref{eq:Henon}. 
By the comparison principle, we have
\[
U^*(r,t) \le  U_\infty(r+r_0) \quad \mbox{for all} \quad r\in [0,R],\,\, t\ge t_0.
\]
This implies that $U^*(r,t)$ is uniformly bounded in $[0,R]\times (0,\infty)$. 
On the other hand, the supercriticality $p>p_{\rm S}(\sigma)$ implies that $U\equiv 0$ is the only nonnegative steady state.
Therefore, $\lim_{t\to\infty} u^*(x,t)=0$ holds for any $x\in B_R$.
This contradicts Lemma~\ref{Lemma:GS11}, and the claim \eqref{eq:3.11} is established.

Next, it is known from \cite{FT00} that, when $p_{\rm S}(\sigma) < p < p_{\rm JL}(\sigma)$, there exists a backward self-similar solution $\Phi_m$ of \eqref{eq:Henon}:
\begin{equation}
\label{eq:BSSS}
\Phi_m (r,t) := (T-t)^{-\frac{2+\sigma}{2(p-1)}} \varphi_m\left(\frac{r}{\sqrt{T-t}}\right), \quad t<T,
\end{equation}
for any $T>0$, where the profile function $\varphi_m$ is positive on $[0,\infty)$ and satisfies 
\begin{equation*}
\left\{
\begin{array}{ll}
    \displaystyle{\varphi_m'' + \left(\frac{N-1}{r} - \frac{r}{2}\right) \varphi_m' -\frac{\beta}{2} \varphi_m +r^\sigma \varphi_m^p  =0}, \quad  &r>0, \vspace{3pt}\\
    \varphi_m'(0)=0, \quad  & \vspace{3pt}\\
    \displaystyle{\lim_{r\to\infty} r^\frac{2+\sigma}{p-1} \varphi_m(r) =m}, \quad  & \vspace{3pt}\\
\end{array}
\right.
\end{equation*}
for a certain $m \in (0,c^*)$.

Let us consider the intersection $U^*$ and $\Phi_m$.
Fix $t_1 \in (0,1)$.
Since $U^*_r(R,t_1)<0$, there exists a positive constant $\varepsilon \in (0,R/2)$ such that
$U^*_r(r,t_1) <0$ for all $r \in [R-\varepsilon,R]$. 
Set 
\[
\gamma_1 := \min_{r \in [0,R-\varepsilon]} \frac{U^*(r,t_1)}{2}, \quad \gamma_2 := \max_{r \in [R-\varepsilon,R]} U^*_r(r,t_1).
\]
Clearly, $\gamma_1 >0$ and $\gamma_2 <0$.
By \eqref{eq:BSSS}, we can choose $T>0$ sufficiently large so that $\Phi_m(r,t_1)<\gamma_1$ for all $r>0$ and $(\Phi_m)_r (r,t_1) > \gamma_2$ for all $r \in [R-\varepsilon,R]$.
On the other hand, from the boundary condition, we have $U^*(R,t) =0$ and $\Phi_m(R,t)>0$ for all $t>0$. This implies that $\mathcal{Z}_{[0,R]}(U^*(\cdot,t_1) - \Phi_m(\cdot, t_1)) =1$.
Combining this with Proposition~\ref{Proposition:zeronumber}, we have 
\begin{equation}
\label{eq:3.13}
\mathcal{Z}_{[0,R]}(U^*(\cdot,t) - \Phi_m(\cdot,t)) \le 1, \quad t\ge t_1.
\end{equation}
Note that $\Phi_m(r,T) = m r^{-(2+\sigma)/(p-1)}$ for all $r>0$.
Recall $U_\infty(r) = c_\infty r^{-(2+\sigma)/(p-1)}$ and \eqref{eq:3.11}.
Let $r_1$ and  $r_2$ be the smallest and largest intersection points of $U^*$ and $U_\infty$ in $(0,R)$.
Choose $t_2 < T$ sufficiently close to $T$. 
Then there must be at least one intersection of $U^*$ and $\Phi_m$ in $(0,r_1)$ and $(r_2, R)$, respectively.
This implies that $\mathcal{Z}(U^*(\cdot, t_2)- \Phi_m(\cdot, t_2)) \ge 2$, which contradicts \eqref{eq:3.13} and Proposition~\ref{Proposition:zeronumber}~(ii).
Thus, case~(G) does not occur and we conclude that $u^*$ blows up in finite time.

Finally, we shall show that  threshold solution $u^*$ blows up at the origin.
Since $\mu^* g \in L^\infty(B_R)$, problem~\eqref{eq:Henon} with $u_0 = \mu^* g$ is locally solvable.
This  implies that
$u^*$ blows up in  finite time $T^* \in (0, \infty)$ and the proof is complete.
\end{proof}

We are now ready to prove Theorem~\ref{Theorem:A}. 
\begin{proof}[Proof of Theorem~{\rm \ref{Theorem:A}}]
Let  $g \in C(\overline{B_R}) \cap H^1_0(B_R)$ be a nonnegative radially symmetric function with $g\not \equiv0$.
It follows from Proposition~\ref{Proposition:TSblowsup} that the threshold solution $u^*$ blows up in  finite time.
By Proposition~\ref{Proposition:SDE} we know that the only blow-up point of $u^*$ is the origin.
Thus, the proof is complete.
\end{proof}

\section{Nondegeneracy of blow-up point and estimates of blow-up rate}
\subsection{Nondegeneracy of blow-up point}
In this subsection, we prove Theorem~\ref{Theorem:B}.
The proof is based on the arguments in \cite{GSU07}. 
\begin{proof}[Proof of Theorem~{\rm \ref{Theorem:B}}]
We can assume, without loss of generality, that
$R>0$ is sufficiently large and $r=1$.
It follows from \eqref{eq:Theorem:B} that there exist a sufficiently small $\delta>0$ and $\tau \in (0,T)$ sufficiently close to $T$ such that
\begin{equation*}
\mathcal{U}(x,t) = |x|^\frac{\sigma}{p-1} u(x,t) \le \delta (T-t)^{-\frac{1}{p-1}} \quad \mbox{for} \quad (x,t) \in B_1\times(\tau, T). 
\end{equation*}
Let $\phi \in C^\infty(B_1)$ be a cut-off function satisfying $0\le \phi \le 1$ and $\phi\equiv 1$ in $B_{1/2}$.
The function $w(x,t) := \phi(x) u(x,t)$ satisfies the equation 
\[
\partial_t w = \Delta w -g(x,t) + \phi(x) |x|^\sigma u(x,t)^p \quad \mbox{in} \quad B_1 \times (0,T).
\]
Let $\tau \in(0,T)$ be close enough to $T$.
Then the representation formula gives us 
\[
w(\cdot, t) = e^{(t-\tau)\Delta} w(\cdot,\tau) + \int_\tau^t e^{(t-s) \Delta} (-g(\cdot,s) + \phi |\cdot|^\sigma u(\cdot,s)^p) \, \di s, 
\]
where $\{e^{t\Delta}\}_{t\ge0}$ is the heat semigroup in $B_1$ with the Dirichlet boundary condition.
By the estimate of the derivative of the Dirichlet heat kernel $G_1$ on $B_1$ (see \cite[Theorem~2.1]{Zhang06}), the comparison principle, and Lemma~\ref{Lemma:HI21} we have 
\begin{equation*}
\begin{split}
&\int_\tau^t|[e^{(t-s)\Delta} g(\cdot, s)](x)| \, \di s\\
\le & 2 \int_\tau^t |\nabla e^{(t-s)\Delta} ( u(\cdot,s)\nabla\phi)| \, \di s
+
\int_\tau^t | e^{(t-s)\Delta} (u(\cdot,s)\Delta \phi)| \, \di s\\
\le & C\int_\tau^t (t-s)^{-\frac{1}{2}}\|\mathcal{U}(\cdot,s)\|_{L^\infty(B_1)} \int_{\mathbb{R}^N}G(x-y,t-s)|y|^{-\frac{\sigma}{p-1}} \, \di y \di s\\
& \quad + \int_\tau^t \|\mathcal{U}(\cdot,s)\|_{L^\infty(B_1)} \int_{\mathbb{R}^N}G(x-y,t-s) |y|^{-\frac{\sigma}{p-1}} \,  \di y\di s\\
\le & C |x|^{-\frac{\sigma}{p-1}} \int_{\tau}^t 
(t-s)^{-\frac{1}{2}}\|\mathcal{U}(\cdot,s)\|_{L^\infty(B_1)}  \, \di s
\end{split}
\end{equation*}
for all $x\in B_1$ and $t\in (\tau,T)$.
Set $\mathcal{W}(x,t) = |x|^{\sigma/(p-1)} w(x,t)$.
Similarly, we obtain
\begin{multline}
w(x,t) 
\le C |x|^{-\frac{\sigma}{p-1}} \|\mathcal{W}(\cdot,\tau)\|_{L^\infty(B(0,1))}
\\
+ C|x|^{-\frac{\sigma}{p-1}}\int_\tau^t (t-s)^{-\frac{1}{2}}
\|\mathcal{U}(\cdot,s)\|_{L^\infty(B_1)} \, \di s\\
+ C |x|^{-\frac{\sigma}{p-1}}\int_\tau^t 
\|\mathcal{W}(\cdot,s)\|_{L^\infty(B_1)}\|\mathcal{U}(\cdot,s)\|_{L^\infty(B_1)}^{p-1}\, \di s
\end{multline}
for all $x\in B_1$ and $t\in (\tau, T)$. 
Multiplying the last inequality by $|x|^{\frac{\sigma}{p-1}}$ and taking $L^{\infty}$ norm in $B_1$, we arrive at 
\begin{equation*}
\begin{split}
\|\mathcal{W}(\cdot,t)\|_{L^\infty(B_1)}
&\le  C_1  \|\mathcal{W}(\cdot,\tau)\|_{L^\infty(B_1)}+ C_2\int_\tau^t (t-s)^{-\frac{1}{2}} \|\mathcal{U}(\cdot,s)\|_{L^\infty(B_1)} \, \di s\\
& \quad + C_3 \int_\tau^t \|\mathcal{W}(\cdot,s)\|_{L^\infty(B_1)}\|\mathcal{U}(\cdot,s)\|_{L^\infty(B_1)}^{p-1}\, \di s\\ 
\end{split}
\end{equation*}
for all $t\in (\tau,T)$. 
By the assumption \eqref{eq:Theorem:C1}, for every $\varepsilon\in(0,1)$ there exists $s_0 > 0$ large enough such that 
$\|\mathcal{U}(\cdot,s)\|_{L^\infty(B_1)}^{p-1} < \varepsilon^{p-1} /(T-s)$ for $s> s_0$. 
Applying this estimate in the above inequality and performing a Gronwall-type argument, we then obtain 
\[
\|\mathcal{W}(\cdot,t)\|_{L^\infty(B_{1/2})} \le C (T-t)^{-\frac{\varepsilon}{p-1}} \quad \mbox{for} \quad  s_0<t<T.
\]
With this improved estimate, we repeat the previous procedure finitely many times, 
to get the boundedness of $|x|^{\sigma/(p-1)} u(x,t)$.
This implies that $0$ is not a blow-up point.
For more details, see \cite[Lemma~4.2.1]{GSU07}.
The proof is complete.
\end{proof}

\subsection{Lower estimates without restrictions on $p$}
In this section we shall study  blow-up rate of solutions $u$ of problem~\eqref{eq:Henon}. 
We define
\[
M(t) := \|u(\cdot, t)\|_{L^\infty(B_R)}  \in[0, \infty) \quad \mbox{for} \quad t\in(0,T).
\] 
Since $\overline{B_R}$ is compact in $\mathbb{R}^N$, 
for each $t\in (0,T)$ there is a point $x(t) \in {B_R}$ such that 
\begin{equation}
\label{eq:defofxt}
M(t) = \|u(\cdot, t)\|_{L^\infty(B_R)}  = u (x(t),t).
\end{equation}
If more than one such point is found, the one with the lowest absolute value is taken.
 It is readily seen that function $t \mapsto M(t)$ is Lipschitz continuous in $(0, T)$ and $\lim_{t\nearrow T}M(t)=\infty$ holds
 for every finite-time blow-up solution.

First, we obtain a lower estimate of  blow-up rate without restrictions on $p$.
\begin{proposition}
\label{Proposition:lowerU}
Assume $N\ge 1$, $p>1+\sigma/N$, and $\sigma>0$.
Let $u$ be a nonnegative  solution of problem~\eqref{eq:Henon} which blows up at $t=T\in (0,\infty)$.
Then there exists a constant $C>0$ such that 
\begin{equation*}
C(T-t)^{-\frac{1}{p-1}} \le \||\cdot|^\frac{\sigma}{p-1} u(\cdot, t)\|_{L^\infty(B_R)} \quad \mbox{for} \quad 0<t<T.
\end{equation*}
\end{proposition}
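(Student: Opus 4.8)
The plan is to derive the lower bound from the integral (Duhamel) representation of the solution together with the singularity–decay estimate of Proposition~\ref{Proposition:Phan13}. Set $\mathcal{U}(x,t) := |x|^{\sigma/(p-1)}u(x,t)$ and write $N(t) := \|\mathcal{U}(\cdot,t)\|_{L^\infty(B_R)}$; by Proposition~\ref{Proposition:utoU} we know $N(t)\to\infty$ as $t\nearrow T$, so it suffices to bound $N(t)$ from below near $T$. Fix $t\in(0,T)$ and, for $h\in(0,T-t)$, apply Lemma~\ref{Lemma:CisI} on the interval $[t,t+h]$ to get
\[
u(x,t+h) = [e^{h\Delta}u(\cdot,t)](x) + \int_t^{t+h}[e^{(t+h-s)\Delta}|\cdot|^\sigma u(\cdot,s)^p](x)\,\di s.
\]
Since the first term is nonnegative, dropping it and using $0\le G_R\le G$ yields
\[
u(x,t+h) \ge \int_t^{t+h}\int_{B_R} G_R(x,y,t+h-s)\,|y|^\sigma u(y,s)^p\,\di y\,\di s.
\]

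The idea now is to bound the right-hand side from above in terms of $N(s)$. Writing $|y|^\sigma u(y,s)^p = |y|^{-\sigma/(p-1)}\bigl(|y|^{\sigma/(p-1)}u(y,s)\bigr)^p \le |y|^{-\sigma/(p-1)}N(s)^p$, and using Lemma~\ref{Lemma:HI21} (valid since $\sigma/(p-1) < N$ precisely when $p>1+\sigma/N$) to control $\int_{\R^N}G(x-y,t+h-s)|y|^{-\sigma/(p-1)}\di y \le C|x|^{-\sigma/(p-1)}$, we obtain
\[
|x|^{\sigma/(p-1)}u(x,t+h) \le C\int_t^{t+h} N(s)^p\,\di s,
\]
and since this holds for all $x\in B_R$, we get $N(t+h) \le C\int_t^{t+h}N(s)^p\,\di s$. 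Now I would run a standard ODE-type argument: for $s$ close to $T$ the function $s\mapsto N(s)$ is, up to the comparison just obtained, a subsolution of $y' = Cy^p$ in an integrated sense; combined with $N(s)\to\infty$ this forces $\liminf_{t\nearrow T}(T-t)^{1/(p-1)}N(t) > 0$. Concretely, define $I(t) := \int_t^T N(s)^p\,\di s$ (finite a priori?—one must be slightly careful, see below); then $N(t+h)\le C(I(t)-I(t+h))$, and feeding this back one shows $I$ blows up no faster than $(T-t)^{-(p-1)}\cdot$const, which after differentiating gives the claimed $N(t)\ge c(T-t)^{-1/(p-1)}$. Finally, since the estimate is only needed near $T$ and $N$ is bounded below by a positive constant on any $[t_0,T-\epsilon]$ with $t_0>0$ (as $N$ is continuous and positive there, because $u$ is a nontrivial nonnegative solution), one extends it to all of $(0,T)$ by adjusting $C$.

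The main obstacle I anticipate is making the ODE comparison rigorous without a priori knowing $N$ is locally integrable up to $T$ or that $I(t)<\infty$. The clean workaround is to avoid $I(t)$ altogether: instead argue by contradiction. Suppose no such $C$ exists; then there is a sequence $t_j\nearrow T$ with $(T-t_j)^{1/(p-1)}N(t_j)\to 0$. Using the inequality $N(t+h)\le C\int_t^{t+h}N(s)^p\,\di s$ together with monotonicity-type bookkeeping on the (locally bounded, continuous) function $N$ over shrinking windows near $t_j$, one derives that $N$ must in fact stay comparable to $(T-t)^{-1/(p-1)}$ — the point being that the integral inequality self-improves: if $N$ dips too low at some $t_j$, then $N$ cannot recover fast enough to blow up at $T$, contradicting $N(t)\to\infty$. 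A technically smoother route, which I would actually adopt, is the one used for such bounds in the literature: integrate the differential inequality $\frac{d}{dt}\!\left(\text{something}\right)$ derived from the Duhamel bound over $(t,T)$ after first establishing the weaker statement $\limsup_{t\nearrow T}(T-t)^{1/(p-1)}N(t)=+\infty$ (which follows from Theorem~\ref{Theorem:B} applied with small $r$, combined with the fact that if $N$ were bounded the solution would not blow up), and then upgrading $\limsup$ to $\liminf$ via the integral inequality. I expect the bookkeeping in this upgrade step to be the only genuinely delicate part; everything else is a direct application of Lemmas~\ref{Lemma:CisI} and~\ref{Lemma:HI21} and Proposition~\ref{Proposition:utoU}.
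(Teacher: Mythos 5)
Your opening reduction is the same as the paper's, but the key inequality you derive is wrong, and the step you yourself flag as ``delicate'' is exactly where the paper's actual idea lives. First, the inequality $N(t+h)\le C\int_t^{t+h}N(s)^p\,\di s$ cannot be correct: as $h\to 0^+$ the right-hand side tends to $0$ while the left-hand side tends to $N(t)>0$. The source of the error is that you drop the nonnegative term $[e^{h\Delta}u(\cdot,t)](x)$ to obtain a \emph{lower} bound on $u(x,t+h)$, and then bound the remaining Duhamel integral from \emph{above} --- that combination yields no upper bound on $u(x,t+h)$ at all. To get an upper bound you must keep the semigroup term and estimate it too via Lemma~\ref{Lemma:HI21}, which produces the inequality the paper actually uses, namely $\mathcal{M}(t)\le C_1\mathcal{M}(\tau)+C_2\int_\tau^t\mathcal{M}(s)^p\,\di s$, with an unavoidable (and possibly large) constant $C_1$ in front of $\mathcal{M}(\tau)$.

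Second, even with the corrected inequality, none of your proposed ways of concluding is actually carried out: the quantity $I(t)=\int_t^T N(s)^p\,\di s$ is not known to be finite; Theorem~\ref{Theorem:B} asserts that $0$ is not a blow-up point when a certain $\limsup$ vanishes, which is not a $\liminf$ lower bound on the weighted norm and does not feed back into this proposition; and the claimed ``self-improvement'' of the integral inequality is precisely what requires proof. The paper closes the argument with a doubling-time device that sidesteps all of this: fix $\tau\in(0,T)$ and let $t_0$ be the \emph{first} time in $(\tau,T)$ at which $\mathcal{M}(t_0)=2C_1\mathcal{M}(\tau)$; such a $t_0$ exists because $\mathcal{M}(t)\to\infty$ as $t\nearrow T$ by Proposition~\ref{Proposition:utoU}. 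On $[\tau,t_0]$ one has $\mathcal{M}(s)\le 2C_1\mathcal{M}(\tau)$, so the integral inequality gives $2C_1\mathcal{M}(\tau)\le C_1\mathcal{M}(\tau)+(2C_1)^pC_2(t_0-\tau)\mathcal{M}(\tau)^p$, hence $(T-\tau)\mathcal{M}(\tau)^{p-1}\ge(t_0-\tau)\mathcal{M}(\tau)^{p-1}\ge c>0$, which is the claim. Without this (or an equivalent rigorous closing step) and with the direction error fixed, your write-up does not yet constitute a proof.
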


\begin{proof}
We set 
\[
\mathcal{U}(x,t) = |x|^\frac{\sigma}{p-1}u(x,t) \quad \mbox{and} \quad \mathcal{M}(t) = \|\mathcal{U}(\cdot, t)\|_{L^\infty(B_R)}.
\]
It follows from Lemmas~\ref{Lemma:CisI}, \ref{Lemma:HI21}, and the comparison principle that
\begin{equation*}
 \begin{split}
u(x,t) 
&= \int_{B_R} G_R(x,y,t-\tau) |y|^{-\frac{\sigma}{p-1}} |y|^{\frac{\sigma}{p-1}}u(y,\tau) \, \di y\\
&\qquad\qquad+ 
\int_\tau^t \int_{B_R} G_R(x,y,t-s)|y|^{-\frac{\sigma}{p-1}} (|y|^\frac{\sigma}{p-1} u(y,s))^p \, \di y \di s\\
&\le \mathcal{M}(\tau) \int_{\mathbb{R}^N} G(x-y,t-\tau) |y|^{-\frac{\sigma}{p-1}} \, \di y\\
&\qquad\qquad+ \int_\tau^t\mathcal{M}(s)^p
\int_{\mathbb{R}^N} G(x-y,t-s) |y|^{-\frac{\sigma}{p-1}} \, \di y \di s\\
&\le C_1 |x|^{-\frac{\sigma}{p-1}} \mathcal{M}(\tau)
+
C_2 |x|^{-\frac{\sigma}{p-1}}\int_\tau^t\mathcal{M}(s)^p \, \di s
 \end{split}   
\end{equation*}
for all $x\in B_R$ and $0<\tau<t<T$, where $C_1, C_2>0$ are constants.
Since $x\in B_R$ is arbitrary, we have
\[
\mathcal{M}(t) 
\le C_1  \mathcal{M}(\tau)
+
C_2\int_\tau^t\mathcal{M}(s)^p \, \di s
\]
for all $0<\tau < t <T$.

Fix $\tau\in(0,T)$ arbitrary.
Thanks to Proposition~\ref{Proposition:utoU}, we can 
choose
\[
t=t_0 := \min\{t' \in (\tau,T); \mathcal{M}(t') = 2C_1\mathcal{M}(\tau)\} \in (\tau, T).
\]
Then we obtain
\begin{equation*}
\begin{split}
2C_1 \mathcal{M}(\tau) 
&\le C_1 \mathcal{M}(\tau) + (2C_1)^pC_2(t_0-\tau)\mathcal{M}(\tau)^p 
\end{split}
\end{equation*}
for all $0<\tau<T$, which implies 
\[
C(T-t)^{-\frac{1}{p-1}} \le \mathcal{M}(t)
\]
for all $t\in(0,T)$.
Thus, the proof is complete.
\end{proof}

Theorem~\ref{Theorem:C} immediately follows from Proposition~\ref{Proposition:lowerU}.

\begin{proof}[Proof of Theorem~{\rm \ref{Theorem:C}}]
Let $u^*$ be a threshold solution of problem~\eqref{eq:Henon} in $B_R\times (0,T)$.
For each $t\in (T/2,T)$ we can find $X(t)\in B_R$ satisfying 
\[
\mathcal{M}(t) = |X(t)|^\frac{\sigma}{p-1} u^*(X(t),t).
\]
It follows from Proposition~\ref{Proposition:SDE} that
\[
|X(t)|^\frac{2}{p-1} \mathcal{M}(t) = |X(t)|^\frac{2+\sigma}{p-1} u^*(X(t),t) \le C.
\]
Then by Proposition~\ref{Proposition:SDE} we have
\[
|X(t)|\le C \mathcal{M}(t)^{-\frac{p-1}{2}} \le C\sqrt{T-t}.
\]
Summarizing the calculations so far, we obtain
\begin{equation*}
\begin{split}
C(T-t)^{-\frac{1}{p-1}} \le \mathcal{M}(t)
&= |X(t)|^\frac{\sigma}{p-1} u^*(X(t),t)
\le C (T-t)^\frac{\sigma}{2(p-1)} M(t)
\end{split}
\end{equation*}
for all $t\in(T/2,T)$. This inequality implies \eqref{eq:Theorem:C1}.
For the case of $0<t\le T/2$,
\eqref{eq:Theorem:C1} is immediately obtained since $u_0$ is bounded.
Thus, 
the proof is complete.
\end{proof}

We notice that we can prove the following proposition using exactly the same arguments as in the proof of Theorem~\ref{Theorem:C}.

\begin{proposition}
\label{Proposition:BRforThreshold}
Assume $N\ge 1$, $p>1+\sigma/N$, and $\sigma>0$.
If  threshold solution $u^*$ blows up at $t=T<\infty$.
Then \eqref{eq:Theorem:C1} 
is  also valid.
\end{proposition}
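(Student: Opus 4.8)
The plan is to run verbatim the argument from the proof of Theorem~\ref{Theorem:C}. That argument rested on exactly two inputs: the singularity and decay estimate for threshold solutions, Proposition~\ref{Proposition:SDE}, and the universal lower bound for the weighted norm, Proposition~\ref{Proposition:lowerU}. Both are available in the present generality: Proposition~\ref{Proposition:SDE} is established for all $N\ge1$, $p>1+\sigma/N$, $\sigma>0$ with no further restriction on $p$, and Proposition~\ref{Proposition:lowerU} applies to any nonnegative solution blowing up at a finite time $T$. In particular, the hypothesis $p_{\rm S}(\sigma)<p<p_{\rm JL}(\sigma)$ used in Theorem~\ref{Theorem:C} --- which there served only to guarantee, via Proposition~\ref{Proposition:TSblowsup}, that $u^*$ actually blows up --- is here replaced by the standing assumption that $u^*$ blows up at $t=T<\infty$.

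Concretely, I would set $\mathcal{U}(x,t)=|x|^{\sigma/(p-1)}u^*(x,t)$, $\mathcal{M}(t)=\|\mathcal{U}(\cdot,t)\|_{L^\infty(B_R)}$, and $M(t)=\|u^*(\cdot,t)\|_{L^\infty(B_R)}$, and for $t\in(T/2,T)$ choose $X(t)\in B_R$ with $\mathcal{M}(t)=|X(t)|^{\sigma/(p-1)}u^*(X(t),t)$. Multiplying \eqref{eq:SDE} by $|X(t)|^{2/(p-1)}$ and using $|X(t)|\le R$ together with $t\ge T/2$ yields $|X(t)|^{2/(p-1)}\mathcal{M}(t)=|X(t)|^{(2+\sigma)/(p-1)}u^*(X(t),t)\le C$, hence $|X(t)|\le C\mathcal{M}(t)^{-(p-1)/2}$. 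Combining this with $\mathcal{M}(t)\ge C(T-t)^{-1/(p-1)}$ from Proposition~\ref{Proposition:lowerU} gives $|X(t)|\le C\sqrt{T-t}$, and therefore
\[
C(T-t)^{-\frac{1}{p-1}}\le \mathcal{M}(t)=|X(t)|^{\frac{\sigma}{p-1}}u^*(X(t),t)\le |X(t)|^{\frac{\sigma}{p-1}}M(t)\le C(T-t)^{\frac{\sigma}{2(p-1)}}M(t),
\]
which rearranges to \eqref{eq:Theorem:C1} on $(T/2,T)$. On $0<t\le T/2$ the right-hand side of \eqref{eq:Theorem:C1} is bounded above by a fixed constant, while $M$ is continuous and strictly positive on $[0,T/2]$ (it equals $\mu^*\|g\|_{L^\infty(B_R)}>0$ at $t=0$ and cannot vanish since $u^*$ blows up), so the desired inequality holds there after shrinking $C$.

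I do not anticipate any genuine obstacle, since the content is already present in the proof of Theorem~\ref{Theorem:C}; the only point to check carefully is that each invoked statement --- Propositions~\ref{Proposition:SDE} and~\ref{Proposition:lowerU}, together with the finiteness of the blow-up time, now assumed rather than deduced --- is valid under the weaker condition $p>1+\sigma/N$, which it is. One minor bookkeeping remark: the constant $C^*$ in Proposition~\ref{Proposition:SDE} depends on $\mu^*$ and $g$, so the constant $C$ in \eqref{eq:Theorem:C1} inherits a dependence on the fixed choice of $g$, which is immaterial for the statement.
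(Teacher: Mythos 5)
Your proposal is correct and is exactly the paper's approach: the paper gives no separate proof of Proposition~\ref{Proposition:BRforThreshold}, stating only that it follows by ``exactly the same arguments as in the proof of Theorem~\ref{Theorem:C},'' which is what you carry out, correctly observing that the two inputs (Propositions~\ref{Proposition:SDE} and~\ref{Proposition:lowerU}) are already stated under the weaker hypothesis $p>1+\sigma/N$. Your handling of the range $0<t\le T/2$ via positivity and continuity of $M$ is in fact slightly more careful than the paper's one-line remark.
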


\subsection{Intersection numbers}
In order to establish upper estimates of  blow-up rate,
we introduce the zero number of $f\in C([0,R])$ and collect its properties.
Define 
\begin{equation*}
\begin{split}
\mathcal{Z}_{[0,R]} (f) := \sup \{k \in \mathbb{N}; \,\,
&\mbox{there exists} \, \,
 0< x_0 < x_1 < \cdots < x_k <R\\
&\mbox{such that} \,\, f(x_i)f(x_{i+1})<0 \,\,\mbox{for} \,\, 0\le i <k \}
\end{split}
\end{equation*}
as the zero number of $f$ on $[0,R]$.
The following lemma gives properties of $\mathcal{Z}_{[0,R]}(f)$. For details, see \cite[Lemma~3.5]{GS11}
(see also \cite[Theorem~52.28]{QS19}).
\begin{proposition}
\label{Proposition:zeronumber}
Let $W(r,t)$ be a non-zero smooth radially symmetric solution of 
\begin{equation*}
\left\{
\begin{array}{ll}
	\partial_t w  = \Delta w + a(|x|,t)w, \quad  &x\in B_R,\,\,\, t\in (t_1,t_2), \vspace{3pt}\\
	w \neq 0, \quad  &x\in  \partial B_R,\,\,\, t\in (t_1,t_2), \vspace{3pt}\\
\end{array}
\right.
\end{equation*}
where $a(r,t)$ is continuous on $[0,R)\times(t_1,t_2)$.
Then
\begin{itemize}
\item[(i)] $\mathcal{Z}_{[0,R]}(W (\cdot,t)) < \infty$ for all $t\in (t_1,t_2)$;
\item[(ii)] the function $t \mapsto \mathcal{Z}_{[0,R]}(W(\cdot,t))$ is nonincreasing;
\item[(iii)] if $W(r_0, t_0) = W_r(r_0,t_0) =0$ for some $r_0 \in [0,R]$ and  $t_0 \in (t_1,t_2)$,
then 
$\mathcal{Z}_{[0,R]}(W(\cdot, t)) > \mathcal{Z}_{[0,R]}(W(\cdot, s))$ for all $t_1<t<t_0<s<t_2$.
\end{itemize}
\end{proposition}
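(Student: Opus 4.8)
The plan is to reduce the statement to the classical Sturmian zero-number theory for one-dimensional linear parabolic equations (Angenent's theorem; see also \cite[Chapter~52]{QS19}), the only substantive issue being the singularity of the radial Laplacian at the origin. First I would pass to the radial variable $r=|x|\in[0,R]$: writing $W=W(r,t)$, the equation becomes
\[
\partial_t W=\partial_{rr}W+\frac{N-1}{r}\,\partial_r W+a(r,t)\,W,\qquad r\in(0,R),\ t\in(t_1,t_2),
\]
with $\partial_r W(0,t)=0$ for every $t$ (the gradient of a smooth radial function vanishes at the origin) and $W(R,t)\neq0$. For each fixed $t$ the map $r\mapsto W(r,t)$ is smooth on $[0,R]$ and extends to a smooth even function on $[-R,R]$; since $W(R,t)\neq0$ and an even-order contact at $r=0$ is not a sign change, $\mathcal{Z}_{[0,R]}(W(\cdot,t))$ equals the number of sign changes of this function, all lying in the open interval $(0,R)$.

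On any subinterval $[\delta,R]$ with $\delta\in(0,R)$ the coefficient $\frac{N-1}{r}$ is bounded, so the equation is uniformly parabolic there with bounded coefficients and one is in the classical setting: for every $t$ the zeros of $W(\cdot,t)$ in $(\delta,R)$ are finite in number, this number is nonincreasing in $t$, and it drops strictly across any time $t_0$ at which a multiple zero $W(r_0,t_0)=\partial_r W(r_0,t_0)=0$ occurs with $r_0\in(\delta,R)$. It then remains to rule out accumulation of zeros at the origin, so that these three conclusions pass from $(\delta,R)$ to $(0,R)$ as $\delta\downarrow0$, and to show that a multiple zero located at $r_0=0$ (equivalently $W(0,t_0)=0$, since $\partial_r W(0,t)\equiv0$) also forces a strict drop.

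For this I would use the substitution $v(r,t):=r^{(N-1)/2}W(r,t)$, which removes the singular drift and yields
\[
\partial_t v=\partial_{rr}v+\Big(a(r,t)-\frac{(N-1)(N-3)}{4r^{2}}\Big)v,\qquad r\in(0,R),
\]
where $v$ and $W$ share all zeros on $(0,R)$, $v(R,t)\neq0$, and $v(r,t)=O(r^{(N-1)/2})$ as $r\to0$. When $N\in\{1,3\}$ the singular potential disappears, and a reflection ($W$ extended evenly if $N=1$, $v=rW$ extended oddly if $N=3$) reduces everything to the one-dimensional theory on $(-R,R)$. For the remaining dimensions the potential is of inverse-square type, and I would invoke the refined vanishing-order and clearing-out estimates for parabolic equations with such potentials (spatial unique continuation) to conclude that the zeros of $v$ neither accumulate at $r=0$ nor are gained or lost there unaccounted for, and that a high-order vanishing of $v$ at the Dirichlet endpoint $r=0$ forces the interior zero count to decrease. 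Combined with the regular theory on $[\delta,R]$, this yields (i), (ii) and (iii); in practice one simply quotes \cite[Lemma~3.5]{GS11} or \cite[Theorem~52.28]{QS19}, where precisely this radial statement is established.

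The main obstacle is thus entirely localized at $r=0$: because $\frac{N-1}{r}$ is singular there, the problem is not literally covered by Angenent's hypotheses, and the argument hinges on controlling the zero set near the origin --- routine by reflection when $N\in\{1,3\}$, and otherwise requiring the inverse-square-potential refinements of Sturmian theory, together with parabolic unique continuation to exclude infinite-order vanishing of $W(\cdot,t)$ at the origin. Away from the origin the situation is the textbook Sturm--Angenent one.
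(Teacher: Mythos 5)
Your proposal is correct and ultimately coincides with what the paper does: the paper offers no proof of this proposition beyond citing \cite[Lemma~3.5]{GS11} and \cite[Theorem~52.28]{QS19}, which is exactly the citation you fall back on. Your additional sketch correctly isolates the only genuine issue (the singularity of the radial drift at $r=0$) and handles it plausibly, so it is a harmless elaboration rather than a divergence from the paper's route.
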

By Proposition~\ref{Proposition:zeronumber}, we obtain the following lemma.
\begin{lemma}
\label{Lemma:4.1}
Let $u$ be a nonnegative radially symmetric solution of problem~\eqref{eq:Henon} which  blows up in at $t= T\in (0,\infty)$.
Then there exists $t_0 \in (0,T)$ such that 
\[
u_t(0, t)\,\, \mbox{does not change its sign}
\quad \mbox{and} \quad 
\mathcal{N}(t):=\mathcal{Z}_{[0,R]} [u_t(\cdot, t)]\,\, \mbox{is a constant},
\]
for $t\in(t_0,T)$.
\end{lemma}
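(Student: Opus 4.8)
The plan is to apply Proposition~\ref{Proposition:zeronumber} to $v:=u_t$. Since $u$ is a classical solution of~\eqref{eq:Henon}, it is smooth and radially symmetric in $B_R\times(0,T)$; differentiating the equation in $t$ shows that $v$ is a smooth radially symmetric solution of
\[
\partial_t v = \Delta v + p|x|^\sigma u^{p-1}\,v \qquad \text{in } B_R\times(0,T),
\]
so the coefficient is $a(r,t)=pr^\sigma U(r,t)^{p-1}$ (writing $u(x,t)=U(|x|,t)$), which is continuous on $[0,R)\times(0,T)$ precisely because $\sigma>0$ and $U$ is smooth. Moreover $v$ is nontrivial on every time slice: if $u_t(\cdot,t_\ast)\equiv0$ for some $t_\ast\in(0,T)$, then $v$ solves the linear equation above with zero data at $t_\ast$, hence $v\equiv0$ for $t\ge t_\ast$, i.e.\ $u$ is stationary, contradicting finite-time blow-up. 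Thus Proposition~\ref{Proposition:zeronumber} applies to $W(r,t):=U_t(r,t)$. (The Dirichlet condition gives $v\equiv0$ on $\partial B_R$ rather than $v\neq0$ there; this causes no trouble, since $\mathcal{Z}_{[0,R]}$ counts only sign changes in the open interval $(0,R)$ and no zero can be produced at a point where $W$ vanishes identically — alternatively one works on $[0,R']$ with $R'<R$.)

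Next I would deduce constancy of $\mathcal{N}$. By Proposition~\ref{Proposition:zeronumber}(i)--(ii), $\mathcal{N}(t)=\mathcal{Z}_{[0,R]}[u_t(\cdot,t)]$ is finite for every $t\in(0,T)$ and nonincreasing in $t$. A nonincreasing function with values in $\mathbb{N}$ is eventually constant, so there is $t_1\in(0,T)$ and $N_0\in\mathbb{N}$ with $\mathcal{N}(t)\equiv N_0$ for all $t\in(t_1,T)$.

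It remains to control the sign of $u_t(0,t)$, which is a continuous function of $t\in(0,T)$. Suppose it vanishes at points $s_1<\cdots<s_k<T$. Because $v(\cdot,t)$ is radially symmetric and $C^1$ up to the origin, one has $W_r(0,s_j)=0$ automatically, while $W(0,s_j)=u_t(0,s_j)=0$ by assumption; hence Proposition~\ref{Proposition:zeronumber}(iii) forces $\mathcal{N}$ to drop strictly across each $s_j$. Combining this with the monotonicity in (ii) yields $\mathcal{N}(t)\ge k+\mathcal{N}(s)$ for $t$ slightly below $s_1$ and $s$ slightly above $s_k$, so $k\le\mathcal{N}(t_1/2)<\infty$. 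Therefore $u_t(0,\cdot)$ has only finitely many zeros, so it is nowhere zero on some interval $(t_2,T)$, and by continuity it has a constant sign there. Taking $t_0=\max\{t_1,t_2\}$ gives the assertion. The main (and only real) point of care is checking the hypotheses of Proposition~\ref{Proposition:zeronumber} for $v=u_t$ — nontriviality of $v$, continuity of $a$ at $r=0$ (this is where $\sigma>0$ is used), and the boundary-condition caveat above — together with the elementary but crucial observation that the origin always supplies a point where $W_r$ vanishes, so that a zero of $u_t(0,\cdot)$ is automatically a degenerate zero of $W(\cdot,t)$ in the sense required by part (iii).
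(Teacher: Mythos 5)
Your proof is correct and follows essentially the same route as the paper: apply Proposition~\ref{Proposition:zeronumber} to $V=u_t$, which solves the linearized equation $\partial_t V=\Delta V+p|x|^\sigma u^{p-1}V$, use finiteness and monotonicity of the zero number to get eventual constancy of $\mathcal{N}$, and observe that by radial symmetry a zero of $u_t(0,\cdot)$ is a degenerate zero of $u_t(\cdot,t)$ at $r=0$, forcing a strict drop of $\mathcal{N}$ and hence allowing only finitely many sign changes. The only difference is that you spell out the hypothesis checks (nontriviality of $u_t$ on each time slice, continuity of the coefficient at $r=0$, and the Dirichlet boundary caveat) that the paper leaves implicit.
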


\begin{proof}
See \cite[Lemma~23.12]{QS19} for the case of $\sigma=0$.
We note that the function $u_t$ is a radially symmetric classical solution of 
\begin{equation*}
\left\{
\begin{array}{ll}
	\partial_t V  = \Delta V + p|x|^{\sigma}u^{p-1}V, \quad  &x\in B_R,\,\,\, t\in (0,T), \vspace{3pt}\\
	V = 0, \quad  &x\in  \partial B_R,\,\,\, t\in (0,T). \vspace{3pt}\\
\end{array}
\right.
\end{equation*}
By Proposition~\ref{Proposition:zeronumber}, $\mathcal{N}(t)$ is finite and nonincreasing, hence constant on $(t_0,T)$ for some $t_0\in(0,T)$.
Moreover, by the symmetry of the solution, 
if $u_t(0,t)=0$, then $u_t(\cdot, t)$ has a degenerate zero at $r=0$, so that the function $\mathcal{N}$ drops at time $t$.
Consequently, $u_t(0,t)$ does not change its sign on $(t_0,T)$.
\end{proof}

In the rest of Section~4, we shall prove Theorem~\ref{Theorem:D}.
If $0$ is not a blow-up point, Theorem~\ref{Theorem:D} can be proved immediately by \cite[Theorem~5]{GLS10}.
In what follows, we may assume that
\[
|x(t)| < \frac{R}{2} \quad \mbox{for}\quad 0<t<T,
\]
where $x(t)$ is as in \eqref{eq:defofxt}.
Proposition~\ref{Proposition:rescale} is a key to the proof of Theorem~\ref{Theorem:D}.
\begin{proposition}
\label{Proposition:rescale}
Assume $N\ge 3$, and $p> p_{\rm S}(\sigma)$,  $\sigma>0$. Let $u$ be a solution of problem~\eqref{eq:Henon} which blows up at $t=T\in (0,\infty)$.
Suppose that the blow-up is of Type II, that is,
\begin{equation}
\label{eq:4.1}
\limsup_{t \nearrow  T} (T-t)^\frac{2+\sigma}{2(p-1)} \|u(\cdot,t)\|_{L^\infty(B_R)} = \infty.
\end{equation}
Then there exists a sequence $\{t_j\}_{j=1}^\infty\subset (T/2,T)$ with $t_j\nearrow  T $  such that 
\begin{equation}
\label{eq:4.2}
\lim_{j\to \infty} \frac{u_t(0,t_j)}{M(t_j)^{\frac{2p+\sigma}{2+\sigma}} }=0
\end{equation}
holds. 
Moreover, the sequence of the functions
\begin{equation}
\label{eq:4.3}
v_j(r,s) := \frac{1}{M(t_j)} u( \lambda_j r, t_j + \lambda_j^2s) \quad \mbox{with} \quad \lambda_j := M(t_j)^{-\frac{p-1}{2+\sigma}}
\end{equation}
remains locally bounded in $C^{2,1} ([0,R)\times \mathbb{R})$ and converge, along some subsequence, to a radially symmetric steady state $U_1$ if $s=0$.
\end{proposition}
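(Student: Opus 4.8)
The plan is to manufacture the times $t_j$ out of the Type~II hypothesis \eqref{eq:4.1}, rescale about the origin, extract a limit, and identify it as the profile $U_1$ via the steady-state classification of Proposition~\ref{Proposition:SS}. \emph{Reduction.} By Lemma~\ref{Lemma:4.1} there is $t_0<T$ on which $u_t(0,\cdot)$ has a constant sign and $\mathcal N(\cdot)\equiv\mathcal N$. The sign cannot be $\le0$: otherwise $u(0,\cdot)$ is bounded, so $0\notin B(u_0)$, and by \cite[Theorem~5]{GLS10} the ODE rate \eqref{eq:TypeI*} holds, whence \eqref{eq:TypeI} (since $\tfrac1{p-1}<\tfrac{2+\sigma}{2(p-1)}$), contradicting \eqref{eq:4.1}. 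Hence $u_t(0,\cdot)\ge0$ near $T$; a finite limit of $u(0,\cdot)$ would again give $0\notin B(u_0)$, so $u(0,t)\nearrow\infty$ and $0\in B(u_0)$. In particular $\mathcal N\ge1$, since otherwise $u$ would be monotone nondecreasing in $t$ near $T$, incompatible with $0\in B(u_0)$ by \cites{GS11,GS18}.

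\emph{Choice of $\{t_j\}$ and a priori bounds.} Put $\psi(t):=(T-t)^{\frac{2+\sigma}{2(p-1)}}M(t)$, so $\limsup_{t\nearrow T}\psi(t)=\infty$ by \eqref{eq:4.1}. I would take $t_j\nearrow T$ to be record times of $\psi$ (i.e.\ $\psi(t)\le\psi(t_j)$ on $(T/2,t_j]$, $\psi(t_j)\to\infty$), refined by a selection argument so that, in addition, $\psi'(t_j)(T-t_j)=o\!\big(\psi(t_j)^{(2p+\sigma)/(2+\sigma)}\big)$. Then: (i) $M(t)\le\psi(t_j)(T-t)^{-\frac{2+\sigma}{2(p-1)}}\le M(t_j)$ for $T/2<t\le t_j$, so the rescalings \eqref{eq:4.3} obey $0\le v_j\le1$ for $s\le0$, with $v_j(x(t_j)/\lambda_j,0)=1$; (ii) since $M'(t)=u_t(x(t),t)$ a.e.\ (Hamilton's trick) and $\lambda_j^2=M(t_j)^{-2(p-1)/(2+\sigma)}$, the refined choice gives $0<\partial_sv_j(x(t_j)/\lambda_j,0)=M'(t_j)M(t_j)^{-(2p+\sigma)/(2+\sigma)}=\tfrac{2+\sigma}{2(p-1)}\psi(t_j)^{-2(p-1)/(2+\sigma)}+o(1)\to0$; (iii) $(T-t_j)/\lambda_j^2=\psi(t_j)^{2(p-1)/(2+\sigma)}\to\infty$ and $t_j/\lambda_j^2\to\infty$, so the rescaled cylinders exhaust $[0,\infty)\times\mathbb R$. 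Finally, rescaling the universal bound \eqref{eq:Phan13} of Proposition~\ref{Proposition:Phan13} (valid for all $t<T$) and using $\lambda_j^2/(T-t_j)\to0$ gives $v_j(r,s)\le Cr^{-(2+\sigma)/(p-1)}+o(1)$ for $r$ away from $0$ and $s\le0$; together with $v_j\le1$ this bounds $\{v_j\}$ locally on $[0,\infty)\times(-\infty,0]$.

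\emph{Compactness.} Interior parabolic estimates (the origin is an interior point of the radial picture, and $r^\sigma v_j^p$ is bounded on compacts) upgrade this to uniform $C^{2,1}_{\mathrm{loc}}$ bounds, so along a subsequence $v_j\to v_\infty$ in $C^{2,1}_{\mathrm{loc}}$, where $v_\infty\ge0$ solves $\partial_s v=\Delta v+r^\sigma v^p$ on $\mathbb R^N\times(-\infty,0]$, is radially symmetric, $0\le v_\infty\le1$, $\|v_\infty(\cdot,0)\|_{L^\infty}=1$ attained at $r_0:=\lim|x(t_j)|/\lambda_j$, and $v_\infty>0$ by the strong maximum principle. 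Passing to the limit in (ii), $\partial_sv_\infty(r_0,0)=0$; from $u_t(0,\cdot)\ge0$ and Lemma~\ref{Lemma:4.1}, $\partial_sv_\infty(0,\cdot)\ge0$ and $\mathcal Z_{[0,\infty)}[\partial_sv_\infty(\cdot,s)]\le\mathcal N$ for every $s$ (semicontinuity of the intersection number under $C^{2,1}_{\mathrm{loc}}$ convergence).

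\emph{Identification of the profile, and the main obstacle.} One must now show $w:=\partial_sv_\infty\equiv0$ on $\mathbb R^N\times(-\infty,0]$. If $w(\cdot,0)\ge0$ held this would be immediate: $w\ge0$ solves a linear parabolic equation and attains its minimum $0$ at the final time (at $r_0$), so the strong minimum principle forces $w\equiv0$. The difficulty is exactly that $\mathcal N\ge1$, so $w$ changes sign; one must control the limiting nodal structure and rule out surviving sign changes (equivalently, show that all $\mathcal N$ sign changes of $u_t(\cdot,t_j)$ escape to spatial infinity after rescaling), and this is where one invokes the conservation of $\mathcal N$ together with Proposition~\ref{Proposition:zeronumber}(iii), the one-sided bound $\partial_sv_\infty(0,\cdot)\ge0$, the Lyapunov structure \eqref{eq:I1}, and the fine description of the steady states $U_m$ in Proposition~\ref{Proposition:SS}. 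Once $w\equiv0$, $V:=v_\infty(\cdot,0)$ is a positive radial solution of $V''+\tfrac{N-1}{r}V'+r^\sigma V^p=0$ with $V'(0)=0$, hence $V=U_m$ with $m=V(0)$ by Proposition~\ref{Proposition:SS}; monotonicity of $U_m$ forces $1=\|V\|_{L^\infty}=V(0)=m$ and $r_0=0$, so $v_\infty(\cdot,0)=U_1$. Since $r_0=0$ and $v_j\to v_\infty$ in $C^{2,1}_{\mathrm{loc}}$, $\partial_sv_j(0,0)=\partial_sv_j(x(t_j)/\lambda_j,0)+o(1)\to0$, which is exactly \eqref{eq:4.2}. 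I expect this rigidity step — excluding a nonstationary ancient limit of the rescaled H\'enon flow — to be the genuine crux of the argument.
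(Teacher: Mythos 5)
Your outline has the right overall architecture (rescaling at the scale $\lambda_j=M(t_j)^{-(p-1)/(2+\sigma)}$, compactness, identification of the limit via zero numbers), but it stops exactly at the point you yourself call ``the genuine crux,'' and this step is not an expected difficulty to be handled by ``invoking'' a list of tools --- it \emph{is} the proof. The paper's argument for $\partial_s v_\infty(\cdot,0)\equiv 0$ is concrete: since $\partial_s v_\infty(0,0)=0$ and, by radial symmetry, $\partial_r\partial_s v_\infty(0,0)=0$, the function $\partial_s v_\infty(\cdot,0)$ has a \emph{degenerate} zero at $r=0$; if it were not identically zero, Proposition~\ref{Proposition:zeronumber}~(iii) would force the zero number of $\partial_s v_\infty$ on $[0,A]$ to drop across $s=0$; the drop persists for $\partial_s v_j$ with $j$ large because the zeros at nearby times $s_1<0<s_2$ are simple, and unscaling yields a strict drop of $\mathcal{Z}_{[0,R]}(u_t(\cdot,t))$ between $t_j+\lambda_j^2s_1$ and $t_j+\lambda_j^2s_2$ for every large $j$, contradicting the eventual constancy of $\mathcal{N}(t)$ from Lemma~\ref{Lemma:4.1}. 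You have all the ingredients on the table but never assemble them, so the identification of the limit as a steady state --- and with it the whole proposition --- is not established.

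Two of your preparatory choices also fail to feed this argument correctly. First, your record-time selection controls $u_t$ at the moving maximum point $x(t_j)$ (via $M'(t_j)$), not at the origin, so it does not prove \eqref{eq:4.2}; you only recover \eqref{eq:4.2} at the very end, contingent on the unproven rigidity step, whereas the rigidity step \emph{needs} \eqref{eq:4.2} as input: a degenerate zero of $\partial_s v_\infty(\cdot,0)$ at an interior point $r_0>0$ would require $\partial_r\partial_s v_\infty(r_0,0)=0$, which you do not have, while at $r=0$ it is free by symmetry. The paper therefore proves \eqref{eq:4.2} first and independently: if $\liminf_{t\nearrow T}u_t(0,t)/M(t)^{(2p+\sigma)/(2+\sigma)}>0$, then $u_t(0,t)\ge\delta\, u(0,t)^{(2p+\sigma)/(2+\sigma)}$ integrates to $u(0,t)\le C(T-t)^{-(2+\sigma)/(2(p-1))}$, and combining this with a Type~II sequence and Lemma~\ref{Lemma:4.2} gives a contradiction. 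Second, your a priori bounds on $v_j$ live only on $s\le 0$, while the statement asserts local bounds on $[0,R)\times\mathbb{R}$ and the zero-number argument must compare times $s_1<0<s_2$; the forward-in-time control is where most of the paper's work goes (the doubling-time estimate \eqref{eq:4.13}, obtained from the Duhamel formula with the heat-kernel splitting and the bound \eqref{eq:4.11}, which in turn rests on Lemmas~\ref{Lemma:4.3} and \ref{Lemma:4.4}). None of this appears in your proposal, so even the limit object you would apply the intersection argument to is not defined for $s>0$.
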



In order to prove Proposition~\ref{Proposition:rescale}, 
we invoke blow-up arguments originally given by \cite{CFG08} for equations of the form $u_t = \Delta u + f(u)$ 
and that have recently been generalized in \cite{FK25}. 
An essential difficulty arises in applying this technique to our problem~\eqref{eq:Henon} due to 
the presence of the degenerate potential $|x|^{\sigma}$ with $\sigma>0$, 
since radial solutions $u(r,t)$ may not be nonincreasing in $r$,
even if so they are at $t=0$.
In \cite[Theorem 2]{CFG08}, the authors showed an assertion analogous to Proposition~\ref{Proposition:rescale} 
under the assumption that $u(0,t)= \max_{\Omega} u(\cdot,t)$ holds. 
Moreover, they conjecture that their proof can be carried out, even without this assumption, as long as the conclusions of Lemmas~\ref{Lemma:4.2} and \ref{Lemma:4.4} are true (see \cite[(1.5)]{CFG08} and subsequent paragraphs). 
The proof of Proposition~\ref{Proposition:rescale} shows that the conjecture is correct 
for our problem~\eqref{eq:Henon} if $\sigma>0$.

We prepare a few lemmas.
Lemma~\ref{Lemma:4.2} claims that when the solution $u$ exhibits Type II blow-up, the asymptotic behaviors of the maximum value of $u$ and the value of $u$ at the origin are almost identical near the blow-up time.

\begin{lemma}
\label{Lemma:4.2}
Let $u$ be a solution of problem~\eqref{eq:Henon} in $B_R\times(0,T)$, where $T\in (0,\infty)$
and
$\{t_j\}_{j=1}^\infty \subset (T/2,T)$ be a sequence such that  $t_j \nearrow T$ as $j\to \infty$.
If 
\begin{equation}
\label{eq:4.4}
 \lim_{j\to \infty} \frac{u(0,t_j)}{M(t_j)} =0,
\end{equation}
then one has
\[
\lim_{j\to \infty} (T-t_j)^{\frac{2+\sigma}{2(p-1)}} M(t_j) =0.
\]
\end{lemma}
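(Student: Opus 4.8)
The plan is to argue by contraposition. Suppose $\limsup_{j\to\infty}(T-t_j)^{\frac{2+\sigma}{2(p-1)}}M(t_j)>0$; after passing to a subsequence we may assume $(T-t_j)^{\frac{2+\sigma}{2(p-1)}}M(t_j)\ge c_0>0$ for all $j$, and the goal is to contradict $u(0,t_j)/M(t_j)\to0$. Since $u$ blows up at $T$ we have $M(t_j)\to\infty$, so $\lambda_j:=M(t_j)^{-\frac{p-1}{2+\sigma}}\to0$; writing $S_j:=(T-t_j)\lambda_j^{-2}$, the assumption reads $S_j\ge S_0:=c_0^{\frac{2(p-1)}{2+\sigma}}>0$. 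Introduce the rescaling $v_j(y,s):=M(t_j)^{-1}u(\lambda_j y,t_j+\lambda_j^2 s)$ as in \eqref{eq:4.3}. Then $v_j$ solves $\partial_s v_j=\Delta v_j+|y|^\sigma v_j^p$ on $B_{R/\lambda_j}\times(-t_j\lambda_j^{-2},S_j)$, satisfies $0\le v_j(\cdot,0)\le1$ with $v_j(y_j,0)=1$ at $y_j:=x(t_j)/\lambda_j$, and $v_j(0,0)=u(0,t_j)/M(t_j)\to0$ by hypothesis; as $j\to\infty$ the spatial domains exhaust $\R^N$ and the time intervals exhaust $(-\infty,S_0)$.

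The first ingredient is Proposition~\ref{Proposition:Phan13}, applied to $u$ at $x=\lambda_j y$ and $t=t_j+\lambda_j^2 s$ with $s\in[-1,S_0/2]$: for such $s$ one has $t\in(0,T)$ and $T-t=\lambda_j^2(S_j-s)\ge\tfrac12\lambda_j^2 S_0$, so after rescaling the estimate becomes $|y|^{\frac{\sigma}{p-1}}v_j(y,s)\le C_*(1+|y|^{-\frac{2}{p-1}})$ on $B_{R/(2\lambda_j)}\times[-1,S_0/2]$ for $j$ large, with $C_*=C_*(N,p,\sigma,S_0)$. This has three consequences. First, $\{v_j\}$ is locally uniformly bounded on $(\R^N\setminus\{0\})\times[-1,S_0/2]$. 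Second, taking $s=0$, $y=y_j$ gives $1\le C_*(1+|y_j|^{-2/(p-1)})|y_j|^{-\sigma/(p-1)}$, forcing $|y_j|\le R_0$ for a fixed $R_0>0$; along a further subsequence $y_j\to y_\infty\in\overline{B_{R_0}}$. Third, on $B_1$ the potential is harmless ($|y|^\sigma\le1$), so $v_j$ is there a subsolution of $\partial_s w=\Delta w+w^p$; comparing with the spatially constant solution of $Y'=Y^p$ on a short interval $[0,\tau]$, using $v_j(\cdot,0)\le1$ and the lateral bound above, yields $v_j\le B_*$ on $B_1\times[0,\tau]$ with $B_*,\tau$ independent of $j$ (and $\tau$ may be taken as small as we like). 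Altogether $\{v_j\}$ is locally uniformly bounded on $[(\R^N\setminus\{0\})\times[-1,S_0/2]]\cup[\R^N\times[0,\tau]]$, and by parabolic regularity a subsequence converges locally in $C^{2,1}$ away from $\{y=0\}$ (and locally in $C^0$ on $\R^N\times(0,\tau]$) to a nonnegative solution $v$ of $\partial_s v=\Delta v+|y|^\sigma v^p$.

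The second ingredient combines the hypothesis with the fact that, under the Dirichlet condition, $v_j$ is a supersolution of the heat equation on its whole domain. For $\varepsilon\in(0,1)$ and $j$ large the comparison principle gives $[e^{\varepsilon\Delta}v_j(\cdot,-\varepsilon)](0)\le v_j(0,0)\to0$, where $e^{t\Delta}$ is the Dirichlet heat semigroup on $B_{R/\lambda_j}$; since its kernel at $y=0$ increases to $G(\cdot,\varepsilon)$ as $\lambda_j\to0$, we get $v_j(\cdot,-\varepsilon)\to0$ in $L^1_{\rm loc}(\R^N)$ for every $\varepsilon\in(0,1)$. With the local bounds away from the origin this forces $v\equiv0$ on $(\R^N\setminus\{0\})\times(-1,0)$, hence $v(\cdot,0)\equiv0$ on $\R^N\setminus\{0\}$ by continuity; consequently $v_j(\cdot,0)\to0$ locally uniformly on $\R^N\setminus\{0\}$, and since $0\le v_j(\cdot,0)\le1$ also $v_j(\cdot,0)\to0$ in $L^1_{\rm loc}(\R^N)$. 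If $y_\infty\ne0$ the proof ends here: $1=v_j(y_j,0)\to v(y_\infty,0)=0$, a contradiction.

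The remaining case $y_\infty=0$ — in which $x(t_j)$ approaches the origin faster than $\lambda_j$ and a unit spike of $v_j(\cdot,0)$ concentrates exactly at the degenerate point of the potential — is the main obstacle, and is where the degeneracy of $|x|^\sigma$ at $0$ and the possible non-monotonicity of $u(\cdot,t)$ in $|x|$ genuinely intervene. The plan is to re-center the rescaling at the maximum point, $\tilde v_j(z,s):=v_j(y_j+z,s)$, which solves $\partial_s\tilde v_j=\Delta\tilde v_j+|y_j+z|^\sigma\tilde v_j^p$ with $|y_j+z|^\sigma\to|z|^\sigma$, satisfies $\tilde v_j(0,0)=1$ and $0\le\tilde v_j(\cdot,0)\le1$, inherits $\tilde v_j(\cdot,0)\to0$ in $L^1_{\rm loc}(\R^N)$ (because $y_j\to0$), and still obeys $\tilde v_j\le B_*$ on $B_1\times[0,\tau]$. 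On balls $B_{|y_j|/2}$ shrinking to a point the re-centered potential $|y_j+z|^\sigma$ tends to $0$, so $\tilde v_j$ is there almost caloric; writing the Dirichlet representation formula and evaluating at $z=0$ bounds $\tilde v_j(0,s)$ by $(4\pi s)^{-N/2}\|\tilde v_j(\cdot,0)\|_{L^1}$ plus a Duhamel term $O(s)$ and a boundary term that is uniformly small for $s$ small, and the persistence of the unit value $\tilde v_j(0,0)=1$ has to be reconciled with the $L^1$‑smallness of $\tilde v_j(\cdot,0)$. I expect this analysis on the initial time‑slice near the degenerate point — where Proposition~\ref{Proposition:Phan13} only provides a bound blowing up like $|y|^{-(2+\sigma)/(p-1)}$, so that no a priori bound is available for $s<0$ there — to be the technically most demanding step, and closing it carefully is the heart of the proof.
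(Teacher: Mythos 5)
Your argument for the non-concentration case is sound and genuinely different from the paper's: the contrapositive setup, the origin-centered rescaling, the use of Proposition~\ref{Proposition:Phan13} to bound $|y_j|=|x(t_j)|/\lambda_j$, and the observation that $v_j\ge e^{\varepsilon\Delta}v_j(\cdot,-\varepsilon)$ forces $v_j(\cdot,-\varepsilon)\to 0$ in $L^1_{\rm loc}$ all work, and they dispose of the case $y_\infty\neq 0$ correctly. But the case $y_\infty=0$, which you explicitly leave open, is a genuine gap, not a technicality: nothing in your setup excludes a spike of height $1$ of $v_j(\cdot,0)$ concentrating at the origin with vanishing $L^1$ mass, because Proposition~\ref{Proposition:Phan13} gives no bound on $v_j$ near $y=0$ for $s<0$, and your forward-in-time bound on $B_1\times[0,\tau]$ together with the $L^1_{\rm loc}$ smallness of $v_j(\cdot,0)$ only shows $v_j(y_j,s)$ is small for $s>0$ fixed --- it cannot contradict $v_j(y_j,0)=1$ without equicontinuity at $s=0$, which is exactly what is missing. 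Your sketched repair (re-centering at $y_j$ and a Duhamel estimate on shrinking balls) runs into the same obstruction.

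The idea that closes this in the paper is upstream of the rescaling: using Lemma~\ref{Lemma:4.1} (constancy of the sign of $u_t(0,t)$ near $T$), one first replaces $\{t_j\}$ by times $t_j'$ at which $M(t_j')=\max_{[0,t_j']}M$, checking that the hypothesis \eqref{eq:4.4} and the desired conclusion transfer. This monotone-in-time normalization gives the uniform bound $0\le \tilde v_j\le 1$ on the \emph{entire} backward cylinder $s\le 0$, including near the origin, so the rescaled family (centered at $r_j=|x(t_j)|$) is compact everywhere; the concentration alternative $\liminf r_j/\lambda_j<\infty$ is then killed by equicontinuity plus the strong maximum principle ($\tilde v_\infty(0,0)=1$ versus $\tilde v_\infty(-P,0)=0$ with a Neumann condition at $\rho=-P$), and the remaining alternative $r_j/\lambda_j\to\infty$ yields the conclusion directly from \eqref{eq:Phan13}. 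To complete your proof you would need to import that reduction (or some other a priori backward-in-time bound on $v_j$ near $y=0$); as written, the heart of the lemma is still open.
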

\begin{proof}
We may assume that there exists $j_0 \in \mathbb{N}$ such that 
\begin{equation}
\label{eq:addtionalassumption}
M(t_j) = \max_{t \in [0,t_j]} M(t) \quad \mbox{for every } j \geq j_0 .
\end{equation}
Indeed, if not, 
we can construct a new sequence $\{t_j'\}_{j=1}^\infty \subset (T/2, T)$ with $0\leq t_j' < t_j$, $t_j'< t_{j+1}'$ $(j=1,2,...)$ 
such that 
\begin{equation}
M(t_j') = \max_{t \in [0,t_j]} M(t)
\ge  M(t_j) \quad \mbox{for every } j =1,2,...
\label{ineq:S4-1}
\end{equation}
Since $M(t_j ) \to \infty$ as $j\to\infty$, it follows from \eqref{ineq:S4-1} that $t_j' \to T$ as $j\to \infty$. 
Due to Lemma~\ref{Lemma:4.1}, $u_t(0,t)$ does not change its sign for $t \in (t_0 ,T)$. 
We may assume $t_j \in (t_0 , T)$ for every $j$. 
Suppose $u_t(0,t)>0$ for $t \in (t_0 ,T)$.
Then $u(0,t_j') < u(0,t_j)$, whence by \eqref{ineq:S4-1}, 
\[
0\le \frac{u(0,t_j')}{M(t_j')} < \frac{u(0,t_j)}{M(t_j)} \to 0 \quad \mbox{as}\quad j\to \infty. 
\]
Next, suppose $u_t(0,t)<0$ for $t \in (t_0 ,T)$. Then
we see that \eqref{eq:4.4} holds with $t_j$ replaced by $t_j'$ since $u(0,t)$ is uniformly bounded in $(t_0 , T)$.
In the both cases, we have shown that \eqref{eq:4.4} holds with $t_j$ replaced by $t_j'$. 
Moreover, we have
\begin{equation}
\limsup_{j\to \infty}(T-t_j)^{\frac{2+\sigma}{2(p-1)}} M(t_j) \le \limsup_{j\to \infty}(T-t_j')^{\frac{2+\sigma}{2(p-1)}} M(t_j').
\label{ineq:S4-2}
\end{equation}
due to \eqref{ineq:S4-1}.
Therefore, it suffices to replace $\{t_j\}_{j=1}^\infty$ with $\{t_j'\}_{j=1}^\infty$, which satisfies \eqref{eq:addtionalassumption}. 
Once we obtain $\limsup_{j\to \infty} (T-t_j')^{{(2+\sigma)}/{2(p-1)}} M(t_j') <\infty$, the conclusion of Lemma \ref{Lemma:4.2}
readily follows from \eqref{ineq:S4-2}. 

First, we assume that 
\[
\liminf_{j\to \infty} \frac{r_j}{\lambda_j} <\infty,
\]
where $\lambda_j$ is as in \eqref{eq:4.3} and $r_j := |x(t_j)|$.
We may assume that there exists $P\in[0,\infty)$ such that $\lambda_j^{-1}r_j \to P$ as $j\to \infty$.
Define
\begin{equation}
\tilde{v}_j(r,s) := \frac{1}{M(t_j)} u(r_j +  \lambda_j r, t_j + \lambda_j^2s). 
\end{equation}
We see that $0\le \tilde{v}_j \le 1$
in $(-\lambda_j^{-1} r_j, \lambda_j^{-1}(R-r_j))\times (-\lambda_j^{-2}t_j,0)$ and
 $\tilde{v}_j$ satisfies 
\begin{multline*}
	\displaystyle{\partial_s \tilde{v}_j = \partial_r^2 \tilde{v}_j + \frac{N-1}{\lambda^{-1}_jr_j+r} \partial_r \tilde{v}_j + |\lambda^{-1}_jr_j+r|^\sigma \tilde{v}_j^p}\\ 
    \text{ for }\,\,
r\in \left(-\frac{r_j}{\lambda_j}, \frac{R-r_j}{\lambda_j}\right),\,\, s\in \left(-\frac{t_j}{\lambda_j^2}, 0\right)
\end{multline*}
and
\[
\partial_r \tilde{v}_j\left(-\frac{r_j}{\lambda_j},s \right)=0 \quad \mbox{for}\quad s\in \left(-\frac{t_j}{\lambda_j^2}, 0\right).
\]
By \eqref{eq:4.4}  we have
\[
\tilde{v}_j\left(-\frac{r_j}{\lambda_j},0\right) = \frac{u(0,t_j)}{M(t_j)}  \to 0 \quad \mbox{as} \quad j\to \infty.
\]
Let $\tilde{D}= (-P,\infty)\times (-\infty,0)$.
Therefore, by interior parabolic estimates, it follows that after extracting a subsequence, still denoted by $\{t_j\}$,
$\tilde{v}_j$ converges in $C_{\rm loc}^{2+\alpha, 1+\alpha/2}(\tilde{D})$ to a nonnegative solution $\tilde{v}_\infty$ of
\begin{equation*}
\left\{
\begin{array}{ll}
	\displaystyle{\partial_s \tilde{v}_\infty = \partial_r^2 \tilde{v}_\infty + \frac{N-1}{P+r} \partial_r \tilde{v}_\infty +  |P+r|^\sigma v_\infty^p},\quad &  (r,s)\in \tilde{D}, \vspace{3pt}\\
   \partial_r \tilde{v}_\infty(-P,s)  =0, \quad & s\in (-\infty,0), \vspace{3pt}\\
    \tilde{v}_\infty(0,0)=1, \,\, \tilde{v}_\infty(-P,0) =0.\\
\end{array}
\right.
\end{equation*}
However, since $\tilde{v}_\infty(-P,0)=0$ and $\tilde{v}_\infty$ is nonnegative, by the strong maximum principle, there does not exist such a solution.
Therefore, 
\begin{equation}
\label{eq:4.5}
\lim_{j\to \infty} \frac{r_j}{\lambda_j} =\infty
\end{equation}
must hold.

It follows from Proposition~\ref{Proposition:Phan13} that
\begin{equation*}
\begin{split}
\frac{r_j}{\lambda_j} 
&= \left(r_j^\frac{2+\sigma}{p-1}u(r_j, t_j)\right)^\frac{p-1}{2+\sigma}
\le C\left(1+r_j^\frac{2}{p-1} + \left(\frac{r_j}{\sqrt{T-t}}\right)^\frac{2}{p-1}\right)^\frac{p-1}{2+\sigma} 
\end{split}
\end{equation*}
By \eqref{eq:4.5} we see that
\begin{equation}
\label{eq:r_j}
\lim_{j\to\infty} \frac{r_j}{\sqrt{T-t_j}} =\infty.
\end{equation}
Using this and Proposition~\ref{Proposition:Phan13} again, we obtain
\begin{equation*}
\begin{split}
(T-t_j)^{\frac{2+\sigma}{2(p-1)}} M(t_j) 
\le C \left( \frac{r_j}{\sqrt{T-t_j}} \right)^{-\frac{\sigma}{p-1}}
\left(1 + \left( \frac{r_j}{\sqrt{T-t_j}} \right)^{-\frac{2}{p-1}} \right)  
\end{split}
\end{equation*}
for all $j$.
Thus, the desired conclusion follows from \eqref{eq:r_j}.
\end{proof}

\begin{lemma}
\label{Lemma:4.3}
Assume $p>p_{\rm S}(\sigma)$ and let $u$ be a solution of problem~\eqref{eq:Henon} in $B_R\times(0,T)$, where $T\in (0,\infty)$.
If $u$ exhibits Type II blow-up, then one has 
\begin{equation}
\label{eq:4.6}
\liminf_{t\nearrow T} (T-t)^\frac{2+\sigma}{2(p-1)} \|u(\cdot, t)\|_{L^\infty(B_R)}>0.
\end{equation}
\end{lemma}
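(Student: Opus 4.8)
The plan is to argue by contradiction, using a parabolic rescaling (as in~\eqref{eq:4.3}) together with Phan's singular--decay estimate~\eqref{eq:Phan13} to extract an entire limit solution that the Sobolev supercriticality rules out.

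First I would suppose the conclusion fails, so that there is a sequence $t_j\nearrow T$ with $(T-t_j)^{(2+\sigma)/(2(p-1))}M(t_j)\to0$, where $M(t):=\|u(\cdot,t)\|_{L^\infty(B_R)}$. As in the opening reduction of the proof of Lemma~\ref{Lemma:4.2}, after replacing $\{t_j\}$ by an auxiliary sequence if necessary I may assume $M(t_j)=\max_{[0,t_j]}M$. Then I would rescale,
\[
v_j(y,s):=\frac{1}{M(t_j)}\,u\bigl(\lambda_j y,\,t_j+\lambda_j^2 s\bigr),\qquad \lambda_j:=M(t_j)^{-\frac{p-1}{2+\sigma}},
\]
and check that $v_j$ solves the \emph{same} equation $\partial_s v_j=\Delta v_j+|y|^\sigma v_j^p$ on $B_{R/\lambda_j}\times(-t_j/\lambda_j^2,\delta_j)$, where $\delta_j:=(T-t_j)/\lambda_j^2=\bigl[(T-t_j)^{(2+\sigma)/(2(p-1))}M(t_j)\bigr]^{2(p-1)/(2+\sigma)}\to0$, with $0\le v_j\le1$ for $s\le0$, $\|v_j(\cdot,0)\|_{L^\infty}=1$ attained at $y_j:=x(t_j)/\lambda_j$, and $v_j$ blowing up at $s=\delta_j$; note that $R/\lambda_j\to\infty$ and $t_j/\lambda_j^2\to\infty$.

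Next I would pass to the limit. Since $0\le v_j\le1$ on $\{s\le0\}$, interior parabolic estimates yield (along a subsequence) $v_j\to v_\infty$ in $C^{2,1}_{\rm loc}$. If $\{|y_j|\}$ stays bounded, $v_\infty$ is a nonnegative, bounded ancient solution of the H\'enon equation on $\mathbb{R}^N\times(-\infty,0]$ with $v_\infty(y_\infty,0)=1$ for some $y_\infty$, hence $v_\infty\not\equiv0$; rescaling~\eqref{eq:Phan13} for $v_j$ and letting $j\to\infty$ gives the companion bound $|y|^{\sigma/(p-1)}v_\infty(y,s)\le C\bigl((-s)^{-1/(p-1)}+|y|^{-2/(p-1)}\bigr)$ for $y\ne0$, $s<0$. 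If instead $|y_j|\to\infty$, I would translate to the radius $r=|y_j|$: the term $\tfrac{N-1}{r}\partial_r v_j$ then vanishes in the limit and one obtains a nontrivial nonnegative bounded ancient solution of the one-dimensional equation $\partial_s v=\partial_{rr}v+r_*^\sigma v^p$ with $r_*:=\lim|x(t_j)|\in[0,R/2]$, which already contradicts the Liouville-type theorem invoked in the proof of Proposition~\ref{Proposition:SDE} (when $r_*=0$ the limit equation is the heat equation, with the same conclusion).

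It then remains to rule out $v_\infty$ in the $N$-dimensional case, and this is where I expect the main difficulty. The idea is to exploit that $p$ is Sobolev supercritical---so there is no positive steady state on a ball and, via an energy argument, no admissible nontrivial entire solution with this decay---by running a (localized or weighted) version of the Giga--Kohn/Matano--Merle energy argument with the Lyapunov functional $J$ of~\eqref{energy:J} and~\eqref{eq:I1}, feeding in the Phan-type bound above to handle the spatial tails of $v_\infty$. Alternatively, one can compare $u$ on a small ball $B_{r_0}$ with the scaled singular steady state $\delta U_\infty$ ($0<\delta<1$), which is a supersolution of~\eqref{eq:Henon}, after using~\eqref{eq:Phan13} to check that $U_\infty$ dominates the solution near the origin. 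Either route produces the desired contradiction. The hard part is that, $p$ being supercritical, the nonlinear term is not controlled by the Dirichlet energy and~\eqref{eq:Phan13} only forces the slow decay $v_\infty(y,s)\lesssim|y|^{-\sigma/(p-1)}$ at moderate radii, so the energy argument must be localized and combined with~\eqref{eq:Phan13} with some care; the one-dimensional case, by contrast, is handled routinely as in the proof of Proposition~\ref{Proposition:SDE}.
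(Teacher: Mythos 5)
Your proposal takes a genuinely different route from the paper's (the paper argues by intersection--comparison with the backward self-similar solution $\Phi_m$ of \eqref{eq:BSSS}), but it has a decisive gap at exactly the step you flag as the main difficulty, and that gap cannot be closed along the lines you suggest. The limit $v_\infty$ you produce is a nonnegative, bounded, \emph{ancient} solution of $\partial_s v=\Delta v+|y|^\sigma v^p$ on $\mathbb{R}^N\times(-\infty,0]$ with $\sup v_\infty(\cdot,0)=1$. In the Sobolev supercritical range such objects exist: the regular steady states $U_m$ of Proposition~\ref{Proposition:SS} are positive, bounded, defined on all of $\mathbb{R}^N$, and are in particular ancient solutions. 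Hence no Liouville theorem and no Pohozaev/energy argument on $\mathbb{R}^N$ can exclude $v_\infty$ (nonexistence of positive steady states holds for the Dirichlet problem on a ball, not on the whole space), and comparison with $\delta U_\infty$ fails as well, since for $p_{\rm S}(\sigma)<p<p_{\rm JL}(\sigma)$ the regular steady states intersect $U_\infty$ infinitely many times. Indeed, Proposition~\ref{Proposition:rescale} shows that the rescaled limit \emph{is} the steady state $U_1$; it is not an object to be ruled out.

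A structural symptom of the problem is that your argument never uses the Type~II hypothesis \eqref{eq:4.1}: it only uses the contradiction hypothesis that $(T-t_j)^{(2+\sigma)/(2(p-1))}M(t_j)\to0$ along some sequence. If the argument closed, it would prove \eqref{eq:4.6} for \emph{every} blow-up solution; but \eqref{eq:4.6} fails for solutions with $0\notin B(u_0)$, which obey the ODE rate \eqref{eq:TypeI*} and hence satisfy $(T-t)^{(2+\sigma)/(2(p-1))}\|u(\cdot,t)\|_{L^\infty(B_R)}\to0$. The paper instead plays the two rates against each other: from \eqref{eq:4.1} and the contradiction hypothesis, via Lemma~\ref{Lemma:4.2}, it extracts sequences along which $(T-t)^{(2+\sigma)/(2(p-1))}u(0,t)$ tends to $\infty$ and to $0$ respectively, so that $u(0,\cdot)$ crosses $\Phi_m(0,\cdot)$ infinitely often; each crossing is a degenerate zero of $u-\Phi_m$ at $r=0$ and forces the intersection number to drop, contradicting Proposition~\ref{Proposition:zeronumber}. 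Two further inaccuracies in your sketch: in the escaping-maximum case, under the H\'enon scaling $\lambda_j=M(t_j)^{-(p-1)/(2+\sigma)}$ the translated potential is $|r_j/\lambda_j+r|^\sigma\to\infty$, not $r_*^\sigma$ with $r_*=\lim|x(t_j)|$; and when the limit equation degenerates to the heat equation, the Liouville theorem only yields constants, which is no contradiction.
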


\begin{proof}
Suppose that
\begin{equation}
\label{eq:4.7}
\liminf_{t\nearrow T} (T-t)^\frac{2+\sigma}{2(p-1)} \|u(\cdot, t)\|_{L^\infty(B_R)}=0.
\end{equation}
By \eqref{eq:4.1} and \eqref{eq:4.7}, we can find sequences $\{t_j^{(1)}\}_{j=1}^\infty$ and $\{t_j^{(2)}\}_{j=1}^\infty$ such that
$t_j^{(i)}\nearrow T$ as $j\to\infty$ for $i=1,2$
and
\begin{equation*}
\lim_{j\to\infty} (T-t_j^{(1)})^\frac{2+\sigma}{2(p-1)} M(t_{j}^{(1)}) = \infty \quad \mbox{and} \quad 
\lim_{j\to\infty} (T-t_j^{(2)})^\frac{2+\sigma}{2(p-1)} M(t_j^{(2)}) = 0.
\end{equation*}
Due to Lemma~\ref{Lemma:4.2} and the definition of $M(t)$, we see that
\begin{equation*}
\lim_{j\to\infty} (T-t_j^{(1)})^\frac{2+\sigma}{2(p-1)} u(0,t_j^{(1)}) = \infty \quad \mbox{and} \quad 
\lim_{j\to\infty} (T-t_j^{(2)})^\frac{2+\sigma}{2(p-1)} u(0,t_j^{(2)}) = 0.
\end{equation*}
Let $\Phi_m$ be the backward self-similar solution as in \eqref{eq:BSSS} blowing up at $t=T$.
By the definitions of $\{t_j^{(i)} \}_{j=1}^{\infty} \,\, (i=1,2)$ and $\Phi_m$,
we can construct another sequence $\{t_j^{(3)}\}_{j=1}^\infty$ such that $t_j^{(3)} \nearrow T$ as $j\to \infty$ and
for sufficiently large $j$, 
\begin{equation*}
\begin{split}
& u(0,t_j^{(3)}) >  \Phi_m(0,t_j^{(3)} ) \quad \mbox{if} \quad  j \,\,  \mbox{is odd},\\
& u(0,t_j^{(3)}) <  \Phi_m(0,t_j^{(3)} ) \quad \mbox{if} \quad  j \,\,  \mbox{is even}.\\
\end{split}
\end{equation*}
This fact and the intermediate value theorem imply that 
there exists a sequence $\{t_j\}_{j=1}^\infty$
such that $t_j^{(3)} < t_j < t_{j+1}^{(3)}$ and
$u(0,t_j) = \Phi_m(0, t_j)$ for all sufficiently large $j$.
Since $\partial_r u(0,t_j) = \partial_r \Phi_m(0, t_j)=0$, 
$\mathcal{Z}_{[0,R]} (u(0,t) - \Phi_m(0, t))$
must drop at $t=t_j$ for every $j$.
However, this contradicts Proposition~\ref{Proposition:zeronumber} (i). 
We thus obtain \eqref{eq:4.6} and the proof is complete.
\end{proof}

The following lemma describes the behavior of the maximum point of $u$ when  $u$ exhibits Type II blow-up.

\begin{lemma}
\label{Lemma:4.4}
Assume $p>p_{\rm S}(\sigma)$ and let $u$ be a solution of problem~\eqref{eq:Henon} in $B_R\times(0,T)$, where $T\in (0,\infty)$.
If $u$ exhibits Type II blow-up, then one has 
\begin{equation}
\label{eq:4.8}
\limsup_{t\nearrow T} \frac{|x(t)|}{\sqrt{T-t}}<\infty,
\end{equation}
where $x(t)$ is as in \eqref{eq:defofxt}.
\end{lemma}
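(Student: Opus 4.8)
The plan is to argue by contradiction, playing the singularity–decay estimate of Proposition~\ref{Proposition:Phan13} against the lower bound for the maximum obtained in Lemma~\ref{Lemma:4.3}. Suppose that \eqref{eq:4.8} fails. Then there is a sequence $t_j\nearrow T$ with, writing $r_j:=|x(t_j)|$,
\[
\frac{r_j}{\sqrt{T-t_j}}\longrightarrow\infty\qquad(j\to\infty).
\]
In particular $r_j>0$ for all large $j$, and since (by the reduction made above) $|x(t)|<R/2$, the point $x(t_j)$ lies in the annulus $0<|x|<R/2$ to which Proposition~\ref{Proposition:Phan13} applies. Evaluating \eqref{eq:Phan13} at $x=x(t_j)$ and using $M(t_j)=u(x(t_j),t_j)$ gives
\[
r_j^{\frac{\sigma}{p-1}}M(t_j)\le C\Big(t_j^{-\frac{1}{p-1}}+(T-t_j)^{-\frac{1}{p-1}}+r_j^{-\frac{2}{p-1}}\Big).
\]

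Next I would absorb the two lower-order terms on the right into $(T-t_j)^{-1/(p-1)}$. Since $t_j\ge T/2$ for $j$ large, $t_j^{-1/(p-1)}$ is bounded, while $(T-t_j)^{-1/(p-1)}\to\infty$; and because $r_j^2>T-t_j$ for $j$ large (this is precisely $r_j/\sqrt{T-t_j}>1$), we have $r_j^{-2/(p-1)}\le(T-t_j)^{-1/(p-1)}$. Hence $M(t_j)\le C\,r_j^{-\sigma/(p-1)}(T-t_j)^{-1/(p-1)}$ for all large $j$, with $C$ independent of $j$. Multiplying by $(T-t_j)^{(2+\sigma)/(2(p-1))}$ and using the identity $\tfrac{2+\sigma}{2(p-1)}-\tfrac{1}{p-1}=\tfrac{\sigma}{2(p-1)}$, this becomes
\[
(T-t_j)^{\frac{2+\sigma}{2(p-1)}}M(t_j)\le C\left(\frac{\sqrt{T-t_j}}{r_j}\right)^{\frac{\sigma}{p-1}}\longrightarrow 0\qquad(j\to\infty),
\]
because $\sigma>0$ and $r_j/\sqrt{T-t_j}\to\infty$.

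This contradicts Lemma~\ref{Lemma:4.3}: since $u$ exhibits Type II blow-up one has $\liminf_{t\nearrow T}(T-t)^{(2+\sigma)/(2(p-1))}\|u(\cdot,t)\|_{L^\infty(B_R)}>0$, and $M(t)=\|u(\cdot,t)\|_{L^\infty(B_R)}$. Therefore \eqref{eq:4.8} must hold.

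I do not expect a serious obstacle here. The only points that need a little care are verifying that Proposition~\ref{Proposition:Phan13} is legitimately applicable at $x(t_j)$ — which is why the strict positivity $r_j>0$ along the chosen sequence and the reduction $|x(t)|<R/2$ are invoked — and the elementary bookkeeping that lets the $t_j^{-1/(p-1)}$ and $r_j^{-2/(p-1)}$ contributions be swallowed by $(T-t_j)^{-1/(p-1)}$ near $T$. Conceptually, the entire content of the lemma is that Type II blow-up forces $(T-t)^{(2+\sigma)/(2(p-1))}M(t)$ to stay bounded away from zero (Lemma~\ref{Lemma:4.3}), which is incompatible with the location $x(t)$ of the maximum drifting outside the parabolic scale $\sqrt{T-t}$.
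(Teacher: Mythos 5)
Your proof is correct and follows essentially the same route as the paper: both combine the singularity--decay estimate of Proposition~\ref{Proposition:Phan13} with the lower bound $\liminf_{t\nearrow T}(T-t)^{(2+\sigma)/(2(p-1))}M(t)>0$ from Lemma~\ref{Lemma:4.3} and exploit $\sigma>0$ to force $|x(t)|/\sqrt{T-t}$ to stay bounded. The only cosmetic difference is that you argue by contradiction along a sequence and absorb the lower-order terms explicitly, while the paper manipulates the combined inequality directly; the underlying computation is identical.
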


\begin{proof}
By Lemma~\ref{Lemma:4.3} we may assume that there exists $\delta>0$ such that $\delta (T-t)^{-(2+\sigma)/2(p-1)}  \le M(t)$
for all $t\in (0,T)$.
It follows from Proposition~\ref{Proposition:SDE} and Lemma~\ref{Lemma:4.3} that
\begin{equation}
\label{eq:4.9}
\begin{split}
 \delta(T-t)^{-\frac{2+\sigma}{2(p-1)}}|x(t)|^{\frac{2+\sigma}{p-1}}
&\le M(t)|x(t)|^{\frac{2+\sigma}{p-1}}
= |x(t)|^{\frac{2+\sigma}{p-1}} u(x(t),t)\\
&\le C\left(1+t^{-\frac{1}{p-1}}|x(t)|^{\frac{2}{p-1}} +\left(\frac{|x(t)|}{\sqrt{T-t}}\right)^\frac{2}{p-1}\right),
\end{split}
\end{equation}
which implies that
\begin{equation*}
\begin{split}
  \delta\left(\frac{|x(t)|}{\sqrt{T-t}}\right)^\frac{2+\sigma}{p-1}
&\le C\left(1+|x(t)|^{\frac{2}{p-1}} +\left(\frac{|x(t)|}{\sqrt{T-t}}\right)^\frac{2}{p-1}\right)   
\end{split}
\end{equation*}
for $T/2<t<T$.
Since $\sigma>0$, we see that
\[
\limsup_{t\nearrow T} \frac{|x(t)|}{\sqrt{T-t}}< \infty
\]
must hold.
The proof is now complete.
\end{proof}

We are now ready to prove Proposition~\ref{Proposition:rescale}.

\begin{proof}[Proof of Proposition~{\rm \ref{Proposition:rescale}}]
First, we assume that $u_t(0,t)<0$ near $t=T$.
This implies that $u(0,t)$ is uniformly bounded near $t=T$ and $u(0,t)/M(t)\to 0$ as $t\nearrow T$.
By Lemma~\ref{Lemma:4.2}, the blow-up of $u$ must be of Type I.
This contradicts \eqref{eq:4.1}.
Thus, we obtain $u_t(0,t)>0$ near $t=T$.

We claim that
\[
\liminf_{t\nearrow T} \frac{u_t(0,t)}{M(t)^{\frac{2p+\sigma}{2+\sigma}}} = 0.
\]
If the assertion is not true, then there exist $\delta>0$ and $\tau_*\in(0,T)$ such that
\[
\frac{u_t(0,t)}{u(0,t)^{\frac{2p+\sigma}{2+\sigma}}}\ge \frac{u_t(0,t)}{M(t)^{\frac{2p+\sigma}{2+\sigma}}} \ge \delta
\]
for all $t\in (\tau_*,T)$.
This implies that 
there exists $C>0$ such that
\begin{equation}
\label{eq:4.10}
u(0,t) \le C(T-t)^{-\frac{2+\sigma}{2(p-1)}} \quad \mbox{for} \quad t\in (\tau_*,T).
\end{equation}
By \eqref{eq:4.1}, we can find a sequence $\{t'_j\}_{j=1}^\infty\subset (0,T)$ such that $t'_j\nearrow T$ as $j\to \infty$ and
\[
\lim_{j\to \infty}(T-t'_j)^\frac{2+\sigma}{2(p-1)}M(t'_j) = \infty.
\]
This together with \eqref{eq:4.10} implies that 
\[
\frac{u(0,t'_j)}{M(t'_j)} \le \frac{C}{(T-t'_j)^\frac{2+\sigma}{2(p-1)} M(t'_j)} \to 0 \quad \mbox{as}\quad j\to \infty.
\]
This and Lemma~\ref{Lemma:4.2} contradict the definition of $\{t'_j\}_{j=1}^\infty$.
Thus, the claim follows and   there exists a sequence $\{t_j\}_{j=1}^\infty\subset (0,T)$ with $t_j\nearrow  T $  such that \eqref{eq:4.2} holds.

By \eqref{eq:4.8} and \eqref{eq:4.9}, we see that there exists $C>0$ such that
\begin{equation}
\label{eq:4.11}
M(t) \le C |x(t)|^{-\frac{2+\sigma}{p-1}}\quad \mbox{for all} \quad t\in(T/2,T).
\end{equation}
It follows from Lemmas~\ref{Lemma:CisI}, \ref{Lemma:HI21}, and the comparison principle that
\begin{equation}
\label{eq:4.12}
 \begin{split}
u(x,t) 
&= [e^{(t-t_j)\Delta}u(\cdot,t_j)](x)
+ 
\int_{t_j}^t [e^{(t-s)\Delta}|\cdot|^{\sigma}u(\cdot,s)](x) \, \di s\\
&\le M(t_j) + \int_{t_j}^t M(s)^p
\int_{\mathbb{R}^N} G(x-y,t-s) |y|^{\sigma} \, \di y \di s\\
 \end{split}   
\end{equation}
for all $x\in B_R$ and $t_j<t<T$. 
Assume that $|y|\ge 2|x(t)|$. Then
$|x(t)-y| \ge |y|-|x(t)| \ge |y|/2$.
Since $\tau\mapsto \tau^\sigma \exp(-\tau^2/32)$ is bounded in $[0,\infty)$,
we have
\begin{equation*}
\begin{split}
&\int_{|y|\ge 2|x(t)|} G(x(t)-y,t-s) |y|^{\sigma} \, \di y\\
&\le C(t-s)^{-\frac{N}{2}} \int_{|y|\ge 2|x(t)|} \exp\left(-\frac{|x(t)-y|^2}{4(t-s)}\right)|y|^{\sigma} \, \di y\\
&\le C(t-s)^\frac{\sigma}{2} \int_{|y|\ge 2|x(t)|} (t-s)^{-\frac{N}{2}}\exp\left(-\frac{|y|^2}{32(t-s)}\right)\\
&\qquad\qquad\qquad\qquad\cdot
\left(\frac{|y|}{\sqrt{t-s}}\right)^{\sigma}
\exp\left(-\frac{|y|^2}{32(t-s)}\right) \, \di y\\
&\le C  (t-s)^\frac{\sigma}{2} \int_{\mathbb{R}^N}
(t-s)^{-\frac{N}{2}}\exp\left(-\frac{|y|^2}{32(t-s)}\right) \, \di y
\le C  (t-s)^\frac{\sigma}{2}. 
\end{split}
\end{equation*}
On the other hand, by \eqref{eq:4.11} we have
\begin{equation*}
\begin{split}
\int_{|y|< 2|x(t)|} G(x(t)-y,t-s) |y|^{\sigma} \, \di y 
&\le C|x(t)|^\sigma \int_{|y|< 2|x(t)|} G(x(t)-y,t-s)\, \di y\\
& 
\le CM(t)^{-\frac{p-1}{2+\sigma}\sigma}. 
\end{split}
\end{equation*}
These together with \eqref{eq:4.12} imply that
\begin{equation*}
\begin{split}
M(t) \le M(t_j) + C\int_{t_j}^t (t-s)^\frac{\sigma}{2}M (s)^p \, \di s
+ C M(t)^{-\frac{p-1}{2+\sigma}\sigma}
\int_{t_j}^t M (s)^p \, \di s 
\end{split}
\end{equation*}
for all $t_j<t<T$.
Choosing
\[
t=\tilde{t}_j := \min\{t' \in (t_j,T); M(t') = 2M(t_j) \} \in (t_j, T),
\]
we have
\begin{equation*}
\begin{split}
2M(t_j)
&\le  M(t_j) + CM(t_j)^p\int_{t_j}^{\tilde{t}_j}(\tilde{t}_j-s)^\frac{\sigma}{2} \, \di s + 
C(\tilde{t}_j-t_j) M(t_j)^{p-\frac{p-1}{2+\sigma}\sigma} \\
&\le  M(t_j) + C (\tilde{t}_j - t_j )^{1+\frac{\sigma}{2}}M(t_j)^p + 
C (\tilde{t}_j-t_j)M(t_j)^{p-\frac{p-1}{2+\sigma}\sigma}.
\end{split}
\end{equation*}
Then
\[
1\le C\left((\tilde{t}_j-t_j)^{\frac{2+\sigma}{2(p-1)}} M(t_j)\right)^{p-1}
+
C\left((\tilde{t}_j-t_j)^{\frac{2+\sigma}{2(p-1)}} M(t_j)\right)^{\frac{2(p-1)}{2+\sigma}},
\]
which implies that there exists a constant $c>0$ independent of $j$ such that
\[
c\le (\tilde{t}_j-t_j)^{\frac{2+\sigma}{2(p-1)}} M(t_j).
\]
Therefore, we arrive at
\begin{equation}
\label{eq:4.13}
t_j + s^*M(t_j)^{-\frac{2(p-1)}{2+\sigma}} \le \tilde{t}_j \quad \mbox{and} \quad M(t) \le 2M(t_j), \quad t_j \le t\le \tilde{t}_j.
\end{equation}
where $s^*>0$ is depending only on $N$, $p$, and $\sigma$, in particular, independent of $j$.

Let $v_j$ be as in \eqref{eq:4.3}.
By the same argument as in Lemma~\ref{Lemma:4.2}, we see that
\begin{equation*}
\partial_sv_j(0, 0) = \frac{\lambda_j^2}{M(t_j)} u_t(0,t_j) = \frac{u_t(0,t_j)}{M(t_j)^\frac{2p+\sigma}{2+\sigma}} \to 0 \quad \mbox{as} \quad t\nearrow T.
\end{equation*}
Let $D=(0,\infty)\times(-\infty,s^*)$, where $s^*>0$ is as in \eqref{eq:4.13}.
If $\{t_j\}_{j=1}^\infty$ satisfies
\[
M(t_j) = \max_{t\in[0,t_j]} M(t) \quad \mbox{for all} \quad j,
\]
then by \eqref{eq:4.13} we see that $0\le {v}_j \le 2$
in $(0, \lambda_j^{-1}R)\times (-\lambda_j^{-2}t_j,s^*)$.
If not, there exists $t_j' \in [0,t_j)$ such that
\[
M(t_j') = \max_{t\in [0,t_j)} M(t) > M(t_j).
\]
If follows from Lemma~\ref{Lemma:4.2} that
there exists $K \ge 1$ such that
\[
M(t_j') \le K u(0,t_j') < K u(0,t_j) \le K M(t_j)
\quad\mbox{for sufficiently large} \quad j,
\]
where we used $u_t(0,t)>0$ near $t=T$.
This implies that $0\le v_j (r,s) \le K$  in $(0, \lambda_j^{-1}R)\times (-\lambda_j^{-2}t_j,0)$.
Combining this and \eqref{eq:4.13}, we see that
$0\le v_j (r,s) \le \max\{2,K\}$  in $(0, \lambda_j^{-1}R)\times (-\lambda_j^{-2}t_j,s^*)$.
Summarizing the above we see that $0\le {v}_j \le \max\{2,K\}$
in $(0, \lambda_j^{-1}R)\times (-\lambda_j^{-2}t_j,s^*)$ and
 ${v}_j$ satisfies 
\begin{equation*}
	\displaystyle{\partial_s {v}_j = \partial_{rr} v_j + \frac{N-1}{r} \partial_r v_j + r^\sigma v_j^p} \,\quad 
\mbox{for} \,\,\,\,
r\in \left(0, \frac{R}{\lambda_j}\right),\,\, s\in \left(-\frac{t_j}{\lambda_j^2}, \frac{T-t_j}{\lambda_j^2}\right)
\end{equation*}
and
\[
\partial_r \tilde{v}_j\left(0,s \right)=0 \quad \mbox{for}\,\,\,\,
s\in \left(-\frac{t_j}{\lambda_j^2}, \frac{T-t_j}{\lambda_j^2}\right).
\]
Therefore, by interior parabolic estimates, it follows that after extracting a subsequence, still denoted by $\{t_j\}$,
$v_j$ converges in $C_{\rm loc}^{2+\alpha, 1+\alpha/2}(D)$ to a nonnegative solution $v_\infty$ of
\begin{equation*}
\left\{
\begin{array}{ll}
	\displaystyle{\partial_s v_\infty = \partial_{rr} v_\infty + \frac{N-1}{r} \partial_r v_\infty +  r^\sigma v_\infty^p},\quad &  (r,s)\in D, \vspace{3pt}\\
   \partial_r v_\infty(0,s)  =0, \quad & s\in (-\infty,s^*), \vspace{3pt}\\
    v_\infty(0,0)=1, \,\, \partial_sv_\infty(0,0) =0.\\
\end{array}
\right.
\end{equation*}
Note that $0\le v_\infty \le \max\{2,K\}$ in $\tilde{D}$.

We claim that $\partial_s v_\infty(\cdot,0) \equiv 0$.
Suppose not.
Then there exist $A>0$ and $\varepsilon\in(0,s^*)$ such that
\begin{equation}
\label{eq:4.14}
\partial_s v_\infty(A,s) \neq 0, \quad |s| \le \varepsilon.
\end{equation}
By the above arguments, 
we see that  $\partial_s v_\infty(\cdot,0)$ has a degenerate zero at $r=-P$. It then follows from Proposition~\ref{Proposition:zeronumber} that the zero number of $\partial_s v$ on $[0,A]$ drops at $s=0$.
Namely, we can fix $-\varepsilon<s_1<0<s_2<\varepsilon$
such that $\partial_s v_\infty(\cdot,s_i)$ has only simple zeros on $[0,A]$ and such that
\[
\mathcal{Z}_{[0,A]}(\partial_s v_\infty(\cdot,s_1)) \ge \mathcal{Z}_{[0,A]}(\partial_s v_\infty(\cdot,s_2))+1.
\]
We deduce that for $j$ large enough,
\[
\mathcal{Z}_{[0,A]}(\partial_s v_j(\cdot,s_1)) \ge \mathcal{Z}_{[0,A]}(\partial_s v_j(\cdot,s_2))+1.
\]
Hence 
\begin{equation}
\label{eq:4.15}
\mathcal{Z}_{[0,A\lambda_j]}(u_t(\cdot,t_j+\lambda_j^2 s_1)) \ge \mathcal{Z}_{[0,A\lambda_j]}(u_t(\cdot,t_j + \lambda_j^2s_2))+1.
\end{equation}
Since on the other hand, \eqref{eq:4.14} implies that $u_t (A\lambda_j, t_j+\lambda_j^2 s) \neq 0$ for $|s|\le \varepsilon$ and
\begin{equation}
\label{eq:4.16}
\mathcal{Z}_{[A\lambda_j,R]}(u_t(\cdot,t_j+\lambda_j^2 s_1)) \ge \mathcal{Z}_{[A\lambda_j,R]}(u_t(\cdot,t_j+\lambda_j^2 s_2)).
\end{equation}
By \eqref{eq:4.15} and \eqref{eq:4.16}, we deduce that
$\mathcal{Z}_{[0,R]}(u_t(\cdot,t_j+\lambda_j^2 s_1)) \ge \mathcal{Z}_{[0,R]}(u_t(\cdot,t_j+\lambda_j^2 s_2)) +1$, which contradicts Lemma~\ref{Lemma:4.1}.
It follows $\partial_s v_\infty(\cdot,0) \equiv 0$.
Hence, $v_\infty(\cdot,0)$ satisfies
\begin{equation*}
\left\{
\begin{array}{ll}
	\displaystyle{\partial_{rr} v_\infty + \frac{N-1}{r} \partial_r v_\infty +  r^\sigma v_\infty^p} = 0, \quad \,\, r\in \left(0, \infty\right)\,\, s=0,\vspace{3pt}\\
    \partial_r v_\infty(0,0) = \partial_r v_\infty(0,0)=0, \,\, v_\infty(0,0)=1.\\
\end{array}
\right.
\end{equation*}
We can obtain the desired conclusion.
Thus, the proof is complete.
\end{proof}

At the end of this section, we shall provide a proof of Theorem~\ref{Theorem:D}.

\begin{proof}[Proof of Theorem~{\rm \ref{Theorem:D}}]
Let  $p_{\rm S}(\sigma) < p< p_{\rm JL}(\sigma)$ and let $u$ be a nonnegative radially symmetric solution of problem~\eqref{eq:Henon} which blows up at $t=T\in(0,\infty)$.
Due to Proposition~\ref{Proposition:lowerU}, we have already obtained the lower estimate of \eqref{eq:Theorem:D2}. 
The upper estimate of \eqref{eq:Theorem:D2} follows at once from the Type I estimate \eqref{eq:Theorem:D1} 
and Proposition~\ref{Proposition:Phan13}(with \eqref{eq:Phan13} applied for $|x| \geq \sqrt{T-t}$). 
The proof of Theorem~{\rm \ref{Theorem:D}} is concluded as soon as the blow-up is shown to be of Type I.
Suppose that the blow-up is of Type II.
By Proposition~\ref{Proposition:SS}, we have
\begin{equation}
\mathcal{Z}_{[0,\infty]} (U_1 - U_\infty) = \infty.
\end{equation}
 We see from Proposition~\ref{Proposition:rescale} that
 \begin{equation}
 \lim_{j\to \infty}Z_{[0,R/\lambda_j]} \left(\frac{1}{M(t_j)} u( \lambda_j r, t_j) - U_\infty \left( r\right)\right) = \infty.
\end{equation}
Then: 
 \begin{equation*}
 \begin{split}
 & \lim_{j\to \infty}\mathcal{Z}_{[0,R/\lambda_j]} \left( u(\lambda_j r, t_j) - U_\infty \left(\lambda_j r\right)\right) = \infty\\
 &\Rightarrow \lim_{j\to \infty}\mathcal{Z}_{[0,R]} \left( u(r, t_j) - U_\infty \left(r\right)\right) = \infty.
 \end{split}
\end{equation*}
This contradicts Proposition~\ref{Proposition:zeronumber}.
Therefore, the blow-up must be of Type I and the proof is complete.
\end{proof}

\begin{remark}
In the case of $p=p_{\rm S}(\sigma)$,
the arguments as in the proof of Proposition~{\rm \ref{Proposition:rescale}} cannot be applied since the backward self-similar solution as in \eqref{eq:BSSS} does not exist (see \cite[Section~5]{FT00} and Lemma~{\rm \ref{Lemma:FT00}} below).
However, if there exists a solution $\Phi$ of problem~\eqref{eq:Henon} in $\mathbb{R}^N$ satisfying
\[
\limsup_{t\nearrow T} (T-t)^{\frac{2+\sigma}{2(p-1)}} \Phi(0,t) \in (0, \infty),
\]
by replacing $\Phi_m$ in the proof of Lemma~{\rm \ref{Lemma:4.3}} with $\Phi$, we can prove Proposition~{\rm \ref{Proposition:rescale}} even in the case of $p=p_{\rm S}(\sigma)$.
\end{remark}

\section{Classification of threshold solutions}
\subsection{Possibilities of behaviors of threshold solutions}
The threshold solutions constructed in Section~3   
under the assumption $N\geq 3$ and $p_{\mathrm{S}}(\sigma ) < p < p_{\mathrm{JL}}(\sigma )$
have played a central role in Theorems \ref{Theorem:A} and \ref{Theorem:C}. In fact,
threshold solutions 
can be constructed for every $N\geq 1$ and $1+\sigma/N<p<p_{\mathrm{JL}}(\sigma )$. In this section, we classify their asymptotic behaviors for $N\ge1$ and
$1+\sigma/N<p\le p_{\mathrm{S}}(\sigma )$ for completeness.

Three possibilities arise for behaviors of $u^*$.
\begin{itemize}
   \item[(GB)]  $u^*$ exists globally-in-time and is uniformly bounded;
    \item[(G)]  $u^*$ grows up, {\it i.e.}, $\|u^*(\cdot, t)\|_{L^\infty(B_R)}< \infty$ for all $t>0$ and
    \[
    \limsup_{t\to \infty}\|u^*(\cdot, t)\|_{L^\infty(B_R)} = \infty;
    \]
    \item[(B)]  $u^*$ blows up in  finite time.
\end{itemize}
In what follows, we omit the star superscript on $u^*$ for simplicity.

\begin{theorem}
\label{Theorem:TSblowsup}
Assume $N\ge 3$, $1+\sigma/N < p< p_{\rm JL}(\sigma)$, and $\sigma>0$.
Let $g \in C(\overline{B_R}) \cap H^1_0(B_R)$ be a nonnegative radially symmetric function with $g\not\equiv 0$.
Then  threshold solution $u$ satisfies the following:
\begin{itemize}
    \item[(i)] If $1+\sigma/N<p<p_{\rm S}(\sigma)$, then only the case~{\rm (GB)} occurs;
    \item[(ii)] If $p=p_{\rm S}(\sigma)$, then only the case~{\rm (G)} occurs and the blow-up point is the origin;
    \item[(iii)] If $p_{\rm S}(\sigma)<p<p_{\rm JL}(\sigma)$, then only the case~{\rm (B)} occurs,
    the blow-up is of Type I, and the blow-up point is the origin.
\end{itemize}
\end{theorem}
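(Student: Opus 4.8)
The plan is to treat the three regimes separately, with one common device: by the comparison principle the solutions $u_\mu:=u(\cdot,\cdot;\mu g)$ are nondecreasing in $\mu$, each $u_\mu$ with $\mu<\mu^*$ is global with $u_\mu(\cdot,t)\to 0$, and $\bar u:=\lim_{\mu\nearrow\mu^*}u_\mu$ coincides with $u^*$ on $B_R\times(0,T^*)$ by continuous dependence. Two observations will be used repeatedly: \emph{(a)} since $t\mapsto J[u_\mu(\cdot,t)]$ is nonincreasing and tends to $J[0]=0$, one has $J[u_\mu(\cdot,t)]\ge 0$ for all $t$, hence $J[u^*(\cdot,t)]\ge 0$ for $t\in(0,T^*)$; \emph{(b)} by Proposition~\ref{Proposition:SDE}, $u^*$ is uniformly bounded on $\{R/4<|x|<R\}\times(0,T^*)$, so if $u^*$ ceases to be bounded, in finite time or as $t\to\infty$, this can only happen at the origin. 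Regime (iii) then requires no new work: Proposition~\ref{Proposition:TSblowsup} gives that $u^*$ blows up in finite time, so only (B) occurs; \emph{(b)} identifies the blow-up set as $\{0\}$; and Theorem~\ref{Theorem:D} shows the blow-up is of Type~I (with Proposition~\ref{Proposition:BRforThreshold} providing the matching lower bound).

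For regime (i), $1+\sigma/N<p<p_{\rm S}(\sigma)$, I would exclude (B) and (G) simultaneously by invoking the universal a priori bound for global solutions of the Sobolev subcritical H\'enon equation in $B_R$ under Dirichlet data — available in the subcritical range (cf.~\cite{Phan17}, and \cite{QS19} for $\sigma=0$) — to the effect that there is $\Lambda=\Lambda(N,p,\sigma,R)$ with $\|v(\cdot,t)\|_{L^\infty(B_R)}\le\Lambda$ for every $t\ge 1$ and every solution $v$ of \eqref{eq:Henon} on $B_R\times(0,\infty)$. Applied to each $u_\mu$ with $\mu<\mu^*$ this yields $\|u_\mu(\cdot,t)\|_{L^\infty(B_R)}\le\Lambda$ for $t\ge 1$, uniformly in $\mu$; on $(0,1]$ uniform boundedness of the $u_\mu$ follows by comparison, on a fixed short time interval, with the spatially constant ODE $\psi'=R^\sigma\psi^p$, $\psi(0)=\mu^*\|g\|_{L^\infty(B_R)}$, and the previous estimate afterwards. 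Hence $\bar u=u^*$ is global and uniformly bounded, i.e.\ (GB) holds and (B), (G) are excluded. Finally $u^*\not\to 0$ — otherwise $\mu^*g\in A$, contradicting that $A$ is open — so $u^*$ converges to a positive steady state, which exists in the subcritical range by standard variational arguments.

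Regime (ii), $p=p_{\rm S}(\sigma)$, is the delicate case. Excluding (GB) is routine: by the Lyapunov functional $J$ the $\omega$-limit set of $u^*$ in $C^2_{\rm loc}$ consists of nonnegative steady states of the Dirichlet problem in $B_R$, and the Pohozaev identity forbids a nontrivial nonnegative solution of $-\Delta U=|x|^\sigma U^p$ in the star-shaped ball $B_R$ at the critical exponent; thus $\omega(\mu^*g)=\{0\}$, $u^*\to 0$, contradicting the openness of $A$. The crux is to exclude (B), i.e.\ to rule out finite-time blow-up of the threshold solution at $p=p_{\rm S}(\sigma)$. The zero-number argument of regime (iii) is unavailable here, because $\mathcal{Z}_{[0,\infty]}(U_1-U_\infty)=2<\infty$ (Proposition~\ref{Proposition:SS}) and the backward self-similar profiles \eqref{eq:BSSS} do not exist at $p=p_{\rm S}(\sigma)$, so Lemma~\ref{Lemma:4.3} cannot be used either. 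My strategy would be: first use Proposition~\ref{Proposition:BRforThreshold} to obtain, for free, the lower bound $\liminf_{t\nearrow T}(T-t)^{(2+\sigma)/(2(p-1))}\|u^*(\cdot,t)\|_{L^\infty(B_R)}>0$ — exactly the conclusion of Lemma~\ref{Lemma:4.3} that was previously extracted from $\Phi_m$ — and, with this and \emph{(b)}, run the rescaling of Proposition~\ref{Proposition:rescale} to produce a rescaled bubble converging to the Aubin--Talenti profile $U_1$; then preclude this concentration by a mechanism genuinely particular to the critical exponent, either a Pohozaev-type identity localized on small balls about the origin (the critical exponent being precisely where it becomes rigid), or an energy-quantization argument using $J[u^*(\cdot,t)]\ge 0$ together with the fact that a single bubble carries a fixed positive amount of weighted Aubin--Talenti energy, which the decreasing energy cannot accommodate in view of the monotone approximation $u_\mu\nearrow u^*$ by \emph{global} solutions. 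Once (B) is excluded, (G) is the only remaining possibility, and \emph{(b)} — in the grow-up form noted in Remark~\ref{Remark:GS11} — gives that the singular set is $\{0\}$. I expect the exclusion of finite-time blow-up at the critical exponent to be the technical heart of the proof.
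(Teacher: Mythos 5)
Your treatment of regime (iii) coincides with the paper's (it simply collects Proposition~\ref{Proposition:TSblowsup}, Proposition~\ref{Proposition:SDE} and Theorem~\ref{Theorem:D}), and your exclusion of (GB) in regime (ii) via Pohozaev/openness of $A$ is also the paper's argument. For regime (i), however, your route rests on an input you have not verified: a bound $\|v(\cdot,t)\|_{L^\infty(B_R)}\le\Lambda(N,p,\sigma,R)$ for \emph{every} global solution, valid \emph{at the origin} and uniform in the initial data, so that it survives the limit $\mu\nearrow\mu^*$. The universal estimate the paper actually imports (Proposition~\ref{Proposition:Phan13}) degenerates like $|x|^{-(2+\sigma)/(p-1)}$ at $x=0$ and gives no such control, and \cite{Phan13} is invoked in the paper only to exclude case (G), not to produce a data-independent sup bound. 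The paper instead excludes (B) in the subcritical range by a different mechanism: assuming finite-time blow-up, it passes to backward self-similar variables, uses \cite{Phan17} to bound the rescaled solution $w$, runs the weighted Lyapunov functional to show that the $\omega$-limit set consists of bounded solutions of \eqref{eq:BSS}, invokes Lemma~\ref{Lemma:FT00} to conclude $w\to 0$, and then applies Theorem~\ref{Theorem:B} to deduce that $0$ is not a blow-up point --- contradicting Proposition~\ref{Proposition:SDE}. If the uniform bound you posit is really available for radial solutions of the H\'enon equation, your argument is shorter; as written, its availability is the unproved step.

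The more serious gap is in regime (ii). Proposition~\ref{Proposition:rescale} is stated \emph{under the hypothesis that the blow-up is of Type~II}, so even if you succeed in precluding the rescaled bubble, you have only excluded Type~II finite-time blow-up; Type~I finite-time blow-up at $p=p_{\rm S}(\sigma)$ remains possible and must be ruled out separately. The paper does exactly this: after showing the blow-up would have to be of Type~I, it reruns the subcritical argument (Lemma~\ref{Lemma:FT00} holds for all $1<p\le p_{\rm S}(\sigma)$, then Theorem~\ref{Theorem:B} and Proposition~\ref{Proposition:SDE} give the contradiction). Your proposal omits this step entirely, so ``(B) is excluded'' does not follow from what you outline. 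In addition, your mechanism for precluding the bubble is left as an unexecuted disjunction, and the energy-quantization variant does not close as stated: the limiting profile $U_1$ satisfies $\int|\nabla U_1|^2=\int r^\sigma U_1^{p+1}$, so a single bubble carries energy $\bigl(\tfrac12-\tfrac{1}{p+1}\bigr)\int|\nabla U_1|^2>0$, which is perfectly compatible with $J[u^*(\cdot,t)]\ge 0$ and with $J$ nonincreasing. The paper's actual mechanism is quite different and elementary: setting $L=\liminf_{t\nearrow T}u(0,t)/M(t)>0$ (positivity following from Lemmas~\ref{Lemma:4.2} and \ref{Lemma:4.3}, the latter recovered for threshold solutions via Lemma~\ref{Lemma:TS1}), one compares $u(\cdot,t_j)$ with the steady state $U_m$ for $m=LM(t_j)/2$, deduces $U_1\ge U_{L/2}$ on $[0,\infty)$ in the rescaled limit, and contradicts the once-intersection property of distinct regular steady states at $p=p_{\rm S}(\sigma)$ from Proposition~\ref{Proposition:SS}.
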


We have already proved case~(iii) in Theorems~\ref{Theorem:A} and \ref{Theorem:D}.
Therefore, it suffices to consider cases~(i) and (ii).
We prepare several lemmas.
The following lemma, which is proved in \cite[Section~5]{FT00}, is a key to the proof of Theorem~\ref{Theorem:TSblowsup}.
\begin{lemma}
\label{Lemma:FT00}
Assume $N\ge 1$, $1<p\le p_{\rm S}(\sigma)$, and $\sigma>0$.
Then there exist no positive bounded solutions of 
\begin{equation}
\label{eq:BSS}
\Delta w - \frac{1}{2} y \cdot \nabla w -\frac{2+\sigma}{2(p-1)} w + |y|^\sigma w^p=0 \quad \mbox{in} \quad \mathbb{R}^N.
\end{equation}
In other words, the only bounded solution is $w \equiv 0$.
\end{lemma}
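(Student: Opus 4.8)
The plan is to establish the nonexistence by a Gaussian-weighted Pohozaev--Rellich computation, following the scheme that Giga and Kohn introduced for self-similar blow-up profiles of semilinear heat equations. Write $\beta:=\tfrac{2+\sigma}{2(p-1)}$ and $\rho(y):=e^{-|y|^2/4}$, so that equation \eqref{eq:BSS} takes the divergence form $\rho^{-1}\nabla\cdot(\rho\nabla w)-\beta w+|y|^\sigma w^p=0$; equivalently, $w$ is a steady state of the autonomous rescaled flow $\partial_s w=\Delta w-\tfrac12 y\cdot\nabla w-\beta w+|y|^\sigma w^p$, which admits the Lyapunov functional $\tfrac12\int_{\mathbb{R}^N}\rho|\nabla w|^2\di y+\tfrac\beta2\int_{\mathbb{R}^N}\rho w^2\di y-\tfrac1{p+1}\int_{\mathbb{R}^N}\rho|y|^\sigma w^{p+1}\di y$.

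First I would record the integral identities obtained by testing \eqref{eq:BSS} against $\rho w$, against $\rho\,(y\cdot\nabla w)$, and against $\rho|y|^2 w$, and integrating over $\mathbb{R}^N$. A preliminary point is to justify the integrations by parts and the vanishing of the boundary fluxes at infinity: since $w$ is bounded and positive, interior elliptic estimates applied to \eqref{eq:BSS} bound $\nabla w$ and $D^2 w$ by a fixed polynomial in $|y|$, so every occurring integral converges absolutely against the Gaussian weight and every flux across $\partial B_r$ tends to $0$ as $r\to\infty$. (Alternatively, in the radial case one may feed the ancient self-similar solution $u(x,t):=(T-t)^{-\beta}w\bigl(x/\sqrt{T-t}\bigr)$ of the H\'{e}non parabolic equation into Proposition~\ref{Proposition:Phan13} and let $t\to-\infty$; this yields the decay bound $w(y)\le C\bigl(|y|^{-\sigma/(p-1)}+|y|^{-(2+\sigma)/(p-1)}\bigr)$ for large $|y|$, which makes the tail control transparent and is also useful below.)

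Next I would eliminate the two nonlinear terms $\int_{\mathbb{R}^N}\rho|y|^\sigma w^{p+1}\di y$ and $\int_{\mathbb{R}^N}\rho|y|^{2+\sigma}w^{p+1}\di y$ by forming the appropriate linear combination of these identities, arriving at a relation of the form
\[
c_A\int_{\mathbb{R}^N}\rho|\nabla w|^2\di y + c_B\int_{\mathbb{R}^N}\rho\,w^2\di y + c_{A'}\int_{\mathbb{R}^N}\rho|y|^2|\nabla w|^2\di y + c_{B'}\int_{\mathbb{R}^N}\rho|y|^2 w^2\di y = 0 ,
\]
with explicit constants $c_A=\tfrac N2-1-\tfrac{N+\sigma}{p+1}$, $c_{A'}=\tfrac{1-p}{4(p+1)}$, $c_{B'}=-\tfrac{\sigma}{8(p+1)}$, and $c_B$ carrying the sign of $N(p-1)-2(2+\sigma)$. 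For $1<p\le p_{\rm S}(\sigma)$ one checks that $c_A\le 0$ (with equality exactly at $p=p_{\rm S}(\sigma)$) and $c_{A'},c_{B'}<0$; since $\int_{\mathbb{R}^N}\rho|y|^2 w^2\di y>0$ unless $w\equiv 0$, the identity forces $w\equiv 0$ as soon as $c_B\le 0$ as well.

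The main obstacle is the remaining range where $c_B>0$, that is $N(p-1)>2(2+\sigma)$ (the critical exponent $p_{\rm S}(\sigma)$ itself being included, since there $c_A=0$ but $c_B>0$), in which the four-term identity above is no longer sign-definite. I would close this gap by enlarging the system with one further weighted multiplier, such as $\rho|y|^2(y\cdot\nabla w)$, and solving the resulting linear system so that the final identity becomes sign-definite; in the radial setting---the only one actually needed for Theorem~\ref{Theorem:TSblowsup}---one may instead pass to the Emden--Fowler variables used in the proof of Proposition~\ref{Proposition:SS} and rule out a bounded positive orbit with the help of the decay of $w$ obtained above from Proposition~\ref{Proposition:Phan13}. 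This subcritical-through-critical sign bookkeeping, rather than any single identity, is the delicate part of the argument, and it is carried out in \cite[Section~5]{FT00}.
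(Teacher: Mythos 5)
Your Gaussian--Pohozaev setup is sound: the three identities obtained from the multipliers $\rho w$, $\rho\,(y\cdot\nabla w)$, $\rho|y|^2w$ are correct, the coefficients $c_A$, $c_{A'}$, $c_B$, $c_{B'}$ you report after eliminating the two nonlinear integrals check out, and your justification of the integrations by parts (polynomial bounds on $\nabla w$ from interior estimates, or the decay from Proposition~\ref{Proposition:Phan13} in the radial case) is adequate. The genuine gap is that the argument as written only concludes when $c_B\le 0$, i.e.\ $N(p-1)\le 2(2+\sigma)$, whereas the range $1+2(2+\sigma)/N<p\le p_{\rm S}(\sigma)=1+2(2+\sigma)/(N-2)$ is nonempty and contains the critical exponent, which is explicitly part of the statement. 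For that range you offer only unexecuted suggestions --- a fourth multiplier $\rho|y|^2(y\cdot\nabla w)$, which introduces the new unknown integrals $\int\rho|y|^4|\nabla w|^2$, $\int\rho|y|^4w^2$, $\int\rho|y|^{4+\sigma}w^{p+1}$ and is not obviously closable, or a passage to Emden--Fowler variables --- and then defer to \cite[Section~5]{FT00}. To be fair, the paper itself proves nothing here either (its proof is exactly that citation), but an attempt that outsources ``the delicate part'' to the reference is not a proof.

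The gap is in fact closable with the three identities you already have; the misstep is insisting on eliminating both nonlinear integrals. With $\beta=\frac{2+\sigma}{2(p-1)}$, write $(\mathrm{I}),(\mathrm{II}),(\mathrm{III})$ for the identities from the multipliers $\rho w$, $\rho(y\cdot\nabla w)$, $\rho|y|^2w$, and set $A=\int\rho|\nabla w|^2$, $B=\int\rho w^2$, $A'=\int\rho|y|^2|\nabla w|^2$, $B'=\int\rho|y|^2w^2$, $P=\int\rho|y|^\sigma w^{p+1}$, $Q=\int\rho|y|^{2+\sigma}w^{p+1}$. The combination $\frac{N+\sigma}{p+1}(\mathrm{I})+(\mathrm{II})-\frac{\beta}{2+4\beta}(\mathrm{III})=0$ has coefficient $\frac{N-2}{2}-\frac{N+\sigma}{p+1}\le0$ on $A$, coefficient $0$ on $P$ and on $B'$, coefficient $-\frac{1}{2(2+4\beta)}<0$ on $A'$, coefficient $\beta\bigl(\frac{N(2+\sigma)}{2(p+1+\sigma)}-\frac{N+\sigma}{p+1}\bigr)$ on $B$, whose nonpositivity is equivalent to $p\le p_{\rm S}(\sigma)$, and coefficient $\frac{1}{2(p+1)}-\frac{\beta}{2+4\beta}$ on $Q$, which is $\le0$ iff $\beta(p-1)=\frac{2+\sigma}{2}\ge1$ and is strictly negative precisely because $\sigma>0$. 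Since all six integrals are nonnegative and $Q>0$ for a nontrivial nonnegative bounded $w$, the identity forces $w\equiv0$ on the whole range $1<p\le p_{\rm S}(\sigma)$. That the decisive strict inequality degenerates at $\sigma=0$ is as it must be: there the constant $\kappa=(p-1)^{-1/(p-1)}$ is a bounded positive solution, so no such argument can work without using $\sigma>0$.
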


\begin{lemma}
\label{Lemma:TS1}
Assume $p>1+\sigma/N$ and let $u$ be a threshold solution of problem~\eqref{eq:Henon}.
If $u$ blows up at $t=T$,
then
\[
\limsup_{t\nearrow T} |x(t)|^\frac{2+\sigma}{p-1} M(t) < \infty
\quad \mbox{and} \quad \limsup_{t\nearrow T} \frac{|x(t)|}{\sqrt{T-t}} < \infty,
\]
where $x(t) \in B_R$ is as in \eqref{eq:defofxt}.
\end{lemma}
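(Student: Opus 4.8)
The plan is to exploit the singularity-and-decay estimate for threshold solutions, Proposition~\ref{Proposition:SDE}, which is the crucial improvement over the generic Proposition~\ref{Proposition:Phan13}: it has the harmless constant~$C^*$ in place of the blow-up-time singularity $(T-t)^{-1/(p-1)}$. First I would evaluate \eqref{eq:SDE} at the point $x=x(t)$ where the $L^\infty$ norm is attained, i.e.\ $M(t)=u(x(t),t)$, with $r(t):=|x(t)|$. Since the solution $u^*$ blows up at $t=T<\infty$, the only blow-up point is the origin by Proposition~\ref{Proposition:SDE}, so $r(t)\to 0$ and $M(t)\to\infty$ as $t\nearrow T$; in particular $r(t)^{-2/(p-1)}$ is the dominant term among the three on the right-hand side of \eqref{eq:SDE} for $t$ close to $T$. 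Thus
\[
r(t)^{\frac{\sigma}{p-1}} M(t) \le C^*\bigl(1+t^{-\frac{1}{p-1}}+r(t)^{-\frac{2}{p-1}}\bigr)\le C r(t)^{-\frac{2}{p-1}}
\]
for $t$ near $T$, which upon multiplying by $r(t)^{2/(p-1)}$ gives exactly
\[
r(t)^{\frac{2+\sigma}{p-1}} M(t) \le C,
\]
the first asserted bound.

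For the second bound I would feed the first one back into Proposition~\ref{Proposition:Phan13}. Applying \eqref{eq:Phan13} at $x=x(t)$ and bounding the first term $t^{-1/(p-1)}$ by a constant (since $t$ is near $T>0$), one gets
\[
r(t)^{\frac{\sigma}{p-1}} M(t) \le C\Bigl(1+r(t)^{-\frac{2}{p-1}}+(T-t)^{-\frac{1}{p-1}}\Bigr).
\]
Now I would separately observe, from the first bound just proved, that $M(t)\le C\,r(t)^{-(2+\sigma)/(p-1)}$, hence $r(t)^{\sigma/(p-1)}M(t)\le C r(t)^{-2/(p-1)}$; multiplying the displayed inequality by $r(t)^{2/(p-1)}$ then yields
\[
r(t)^{\frac{2}{p-1}} M(t) \le C\Bigl(r(t)^{\frac{2}{p-1}}+1+\frac{r(t)^{\frac{2}{p-1}}}{(T-t)^{\frac{1}{p-1}}}\Bigr).
\]
Combined with a lower bound on $M(t)$, the desired estimate $r(t)/\sqrt{T-t}\le C$ will drop out. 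The needed lower bound on $M(t)$ is the Type~I lower rate: by Theorem~\ref{Theorem:C} (or Proposition~\ref{Proposition:BRforThreshold}) we have $M(t)\ge c(T-t)^{-1/(p-1)}$ — actually one even has $M(t)\ge c(T-t)^{-(2+\sigma)/(2(p-1))}$ from \eqref{eq:Theorem:C1}, which is stronger and is what one wants here. Using $M(t)\ge c(T-t)^{-(2+\sigma)/(2(p-1))}$ together with $M(t)\le C r(t)^{-(2+\sigma)/(p-1)}$ gives directly
\[
c(T-t)^{-\frac{2+\sigma}{2(p-1)}} \le C\,r(t)^{-\frac{2+\sigma}{p-1}},
\]
i.e.\ $r(t)^{2}\le C(T-t)$, which is the second claim.

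The main obstacle, and the reason this lemma is not completely trivial, is making sure the hypotheses of Proposition~\ref{Proposition:SDE} and of the Type~I lower estimate are genuinely available for the threshold solution under the weaker standing assumption $p>1+\sigma/N$ (rather than $p_{\rm S}(\sigma)<p<p_{\rm JL}(\sigma)$). Proposition~\ref{Proposition:SDE} is stated for all $p>1+\sigma/N$, so the first bound is unconditional; and Proposition~\ref{Proposition:BRforThreshold} supplies the lower rate \eqref{eq:Theorem:C1} for threshold solutions whenever $p>1+\sigma/N$, so the second bound is fine too. A secondary point to handle with care is that $r(t)$ need not be continuous and the maximum point need not be unique; but since all estimates above are uniform pointwise bounds valid for \emph{any} choice of maximizer, the convention in \eqref{eq:defofxt} (take the maximizer of least modulus) causes no difficulty, and the $\limsup$ formulation absorbs any irregularity of $t\mapsto x(t)$.
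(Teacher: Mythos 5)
Your proposal is correct and follows essentially the same route as the paper: the first bound comes from evaluating the threshold-solution estimate \eqref{eq:SDE} of Proposition~\ref{Proposition:SDE} at the maximizer $x(t)$, and the second from combining that bound with the Type~I lower rate $M(t)\ge c(T-t)^{-(2+\sigma)/(2(p-1))}$ supplied by Proposition~\ref{Proposition:BRforThreshold}. The intermediate detour through Proposition~\ref{Proposition:Phan13} is superfluous (your final two-line combination is exactly the paper's argument), but it introduces no error.
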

\begin{proof}
By Proposition~\ref{Proposition:SDE}, we have
\[
|x(t)|^\frac{2+\sigma}{p-1} M(t) = |x(t)|^\frac{2+\sigma}{p-1}u(x(t),t)\le C
\]
for all $t\in(T/2,T)$. Then the first claim is proved.

By Proposition~\ref{Proposition:BRforThreshold} and Lemma~\ref{Lemma:TS1}, there exists $\delta>0$ such that
\[
\delta (T-t)^{-\frac{2+\sigma}{2(p-1)}} \le u(x(t),t) \le C|x(t)|^{-\frac{2+\sigma}{p-1}}
\]
for all $t\in(T/2,T)$. The second claim is then proved.
The proof is complete.
\end{proof}

\subsection{Subcritical case}

When $1+\sigma/N < p< p_{\rm S}(\sigma)$, we know from \cite[Theorem~1.2~(i)]{Phan17} that the blow-up must be of Type I.
Note that although  \cite[Theorem~1.2~(i)]{Phan17} is for the estimate of solutions in the whole space $\mathbb{R}^N$, the same conclusions can be reached for solutions in bounded domains by  similar arguments.

\begin{proof}[Proof of Theorem~{\rm \ref{Theorem:TSblowsup}} in the subcritical case.]
We shall show that case~(B) never occurs.
Suppose that  threshold solution $u$ blows up at $t=T$.
Note that by Proposition~\ref{Proposition:SDE} the blow-up point is only the origin.
First, we claim that 
\begin{equation}
\label{eq:5.0}
\limsup_{t\nearrow T}(T-t)^{\frac{2+\sigma}{2(p-1)}}\|u(\cdot,t)\|_{L^\infty(B_R)} =0.
\end{equation}
By Lemma~\ref{Lemma:TS1}, 
there exists $K>0$ such that 
\begin{equation}
\label{eq:5.1}
|x(t)| \le K \sqrt{T-t} \quad \mbox{near} \quad t=T.
\end{equation}
This implies that one of the blow-up points of $u$
is the origin.
Set 
\[
w(y,s) := (T-t)^\frac{2+\sigma}{2(p-1)} u(y\sqrt{T-t}, s) ,\qquad s= -\log (T-t).
\]
Then $w$ is a radially symmetric solution of 
\begin{multline*}
w_s = \Delta w - \frac{1}{2} y \cdot \nabla w -\frac{2+\sigma}{2(p-1)} w + |y|^\sigma w^p, \quad (y,s) \in B_{R/\sqrt{T-t}} \times (-\log T, \infty) \\
\mbox{with } \,\, w(y,s)=0 \, \mbox{ on } \partial B_{R/\sqrt{T-t}} \times (-\log T, \infty).
\end{multline*}
It follows from \cite[Theorem~1.2~(i)]{Phan17} that
$w$ is uniformly bounded.
We introduce the energy functional 
\[
\mathcal{E}[w] := \int_{B_{R/\sqrt{T-t}}} \left(\frac{1}{2}|\nabla w|^2 + \frac{2+\sigma}{p-1} |w|^2 - \frac{1}{p+1} |y|^\sigma |w|^{p+1}\right) \rho \, \di y,
\]
where $\rho(y) = \exp(-|y|^2/4)$.
Simple calculations yield 
\begin{equation*}
\frac{\di}{\di s} \mathcal{E}[w] \le - \int_{B_{R/\sqrt{T-t}}} w_s^2 \rho \, \di y \le 0. 
\end{equation*}
By the standard dynamical system argument with Lyapunov functional, the $\omega$-limit set with respect to $C^2_{\rm loc}$ topology is included in the set of nonnegative bounded solutions to \eqref{eq:BSS}.
However, since $1+\sigma/N < p< p_{\rm S}(\sigma)$,
it follows from Lemma~\ref{Lemma:FT00} that the solution to this equation is only $0$.
Thus, $w(y,s) \to 0$ uniformly in compact sets in $\mathbb{R}^N$.
This together with \eqref{eq:5.1} implies that
\begin{equation*}
\begin{split}
\limsup_{t\nearrow T}(T-t)^\frac{2+\sigma}{2(p-1)}\|u(\cdot,t)\|_{L^\infty(B_R)} 
&= \limsup_{t\nearrow T} (T-t)^\frac{2+\sigma}{2(p-1)} u(x(t),t)\\
&\le  \limsup_{s\to \infty} \|w(\cdot, s)\|_{L^\infty (B_K)}
= 0
\end{split}
\end{equation*}
Since $u$ is a threshold solution,
it follows from Propositions~\ref{Proposition:SDE} and \ref{Proposition:lowerU} that 
\begin{equation*}
\begin{split}
C(T-t)^{-\frac{1}{p-1}} \le |X(t)|^\frac{\sigma}{p-1} u(X(t),t) \le C(1 + t^\frac{1}{p-1} + |X(t)|^{-\frac{2}{p-1}})
\end{split}
\end{equation*}
for $T/2<t<T$, 
where $X(t)\in B_R$ is the maximum point of $|\cdot|^{\sigma/(p-1)}u(\cdot, t)$: 
\[
\||\cdot|^\frac{\sigma}{p-1}u(\cdot, t)\|_{L^\infty(B_R)}=|X(t)|^\frac{\sigma}{p-1}u(X(t), t) \quad \mbox{for} \quad \frac{T}{2}<t<T.
\]
Then:  
\begin{equation*}
|X(t)| \le C \sqrt{T-t} \quad \mbox{for} \quad \frac{T}{2} < t < T.
\end{equation*}
This together with \eqref{eq:5.0} implies that 
\begin{equation*}
\begin{split}
(T-t)^\frac{1}{p-1}\||\cdot|^\frac{\sigma}{p-1} u(\cdot,t)\|_{L^\infty(B_R)} 
\to 0
\end{split}
\end{equation*}
as $t\nearrow T$.
It then follows from Theorem~\ref{Theorem:B} that the origin is not a blow-up point.
This contradicts Proposition~\ref{Proposition:SDE}.
Thus, case~(B) never occurs.

We shall show that case~(G) never occurs but
this follows immediately from \cite[Theorem~1.5]{Phan13}.
Therefore, the only possible case is (GB), whence the result.
\end{proof}

\subsection{Critical case}

\begin{proof}[Proof of Theorem~{\rm \ref{Theorem:TSblowsup}} in the critical case]
First, assume that (B) occurs.
We claim that the blow-up must be of Type I.
If $\sigma=0$, this has been proved by Matano and Merle \cite[Theorem~1.7]{MM04}. We modify their arguments. 
Suppose that the blow-up is of Type II.
Since the solution to \eqref{eq:BSS} is only $0$ when $p=p_{\rm S}(\sigma)$, Proposition~\ref{Proposition:rescale} cannot be proved by the same arguments as in the case of $p_{\rm S}(\sigma)<p<p_{\rm JL}(\sigma)$ (see the proof of Lemma~\ref{Lemma:4.3}).
However, thanks to Lemma~\ref{Lemma:TS1}, we see that Lemmas~\ref{Lemma:4.2}, \ref{Lemma:4.3}, and \ref{Lemma:4.4} are valid for threshold solutions.
This implies that Proposition~\ref{Proposition:rescale} is valid 
for threshold solution $u$, even in the case of $p=p_{\rm S}(\sigma)$.
Set \[
L:= \liminf_{t\nearrow T} \frac{u(0,t)}{M(t)}\in (0,1].
\]
Note that $L>0$ implies that
$u(0,t) \ge L M(t)$ near $t=T$.
Due to the strong maximal and the comparison principles, there exist $\delta_0>0$ and $t_0\in[0,T)$ such that
\[
u\left(\frac{R}{2},t\right) \ge \delta_0 \quad\mbox{for all} \quad t\in[t_0,T).
\]
Taking $m>0$ sufficiently large, we see from Proposition~\ref{Proposition:SS} that 
\[
U_m\left(\frac{R}{2}\right) \le \frac{\delta_0}{2} \quad\mbox{and} \quad \mathcal{Z}_{[0,R/2]}(u(\cdot,t_0)- U_m) = 1.
\]
Now let $\{t_j\}$ be as in Proposition~\ref{Proposition:rescale}
and take $m= LM(t_j)/2$.
Since $M(t_j) \to \infty$ as $j\to\infty$, taking sufficiently large $j$ if necessary, the above inequalities hold with this $m$. 
Then since $u(0,t_j)\ge LM(t_j) \to \infty$ as $j\to\infty$, we see that 
\[
\mathcal{Z}_{[0,R/2]}(u(\cdot,t_j)-U_m) = 0,
\]
and
\begin{equation}
u(r,t_j) > U_m(r) \quad \mbox{for all} \quad r\in\left[0,\frac{R}{2}\right].
\end{equation}
Therefore, 
\begin{equation}
\begin{split}
\frac{1}{M(t_j)} u(\lambda_jr, t_j) 
&> 
\frac{1}{M(t_j)} U_m( \lambda_jr)=U_{L/2}\left(r\right)
\quad\mbox{for all}\quad r\in \left[0, \frac{R}{2\lambda_j}\right].
\end{split}
\end{equation}
It follows from Proposition~\ref{Proposition:rescale} that
\[
U_1(r) \ge U_{L/2}(r) \quad \mbox{for all} \quad r \in [0,\infty).
\]
This contradicts Proposition~\ref{Proposition:SS}.
Therefore, the blow-up must be of Type I, and 
by the same arguments as in the subcritical case,
case~(B) never occurs.

Assume (GB). By the standard dynamical system argument with Lyapunov functional, the $\omega$-limit set with respect to $C^2_{\rm loc}$ topology is included in the set of nonnegative steady states.
When $p=p_{\rm S}(\sigma)$, however, there exist no positive steady states (see {\it e.g.}, \cite{Ni86}).
This implies that $u\to 0$ as $t\to\infty$,
that is, $u(\cdot,0) = \mu^*g \in A$.
Since $A$ is an open set in $X$, we can find $\mu >\mu^*$ such that $\mu g\in A$.
This is a contradiction. Therefore, case (GB) never occurs
and the only possible case is (G), whence the result.
\end{proof}

\appendix
\section{Minimal $L^1$-continuation}
We have constructed threshold solutions $u^*$ of problem~\eqref{eq:Henon} in Section~3. 
In this appendix, we investigate the continuation beyond the blow-up time, which played a central role in \cite{GS11}, 
although this fact is not necessary to prove our main results.

\subsection{A priori estimates}
In this subsection we establish a priori estimates of solutions of problem~\eqref{eq:Henon}, 
using  property \eqref{eq:I1}. 

\begin{lemma}
\label{Lemma:AE}
Assume $N\ge 3$, $p>p_{\rm S}(\sigma)$, and $\sigma>0$.
Let $u$ be a nonnegative  global-in-time solution of problem~\eqref{eq:Henon}.
Then the following  a priori estimates hold:
\begin{equation}
\label{eq:AE1}
\int_{B_R} u(x,t)^2 \, \di x \le \left[\frac{2}{c_*} J[u_0]\right]^\frac{2}{p+1},
\end{equation}
and
\begin{equation}
\label{eq:AE2}
\int_{0}^t \int_{B_R} |x|^\sigma u(x,t)^{p+1} \,\di x \di t
\le \frac{p+1}{p-1} \left(\frac{1}{2} \left[\frac{2}{c_*} J[u_0]\right]^\frac{2}{p+1} + 2 J[u_0] t\right),
\end{equation}
for all $t\ge0$, where $J[u]$ denotes the energy functional defined in \eqref{energy:J} and 
\[
c_* := \frac{p-1}{p+1} \left(\int_{B_R} |x|^{-\frac{2\sigma}{p-1}} \, \di x\right)^{-\frac{p-1}{2}}.
\]
\end{lemma}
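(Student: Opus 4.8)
The strategy is to exploit the two differential identities in \eqref{eq:I1} together with a Poincaré-type lower bound on the potential term $\int_{B_R}|x|^\sigma u^{p+1}\,\di x$ in terms of $\int_{B_R}u^2\,\di x$. First I would establish the key interpolation inequality: by Hölder with exponents $\tfrac{p+1}{2}$ and $\tfrac{p+1}{p-1}$, writing $u^2=(|x|^{\sigma/(p+1)}u)^2\cdot|x|^{-2\sigma/(p+1)}$, one gets
\[
\int_{B_R}u^2\,\di x\le\left(\int_{B_R}|x|^\sigma u^{p+1}\,\di x\right)^{\frac{2}{p+1}}\left(\int_{B_R}|x|^{-\frac{2\sigma}{p-1}}\,\di x\right)^{\frac{p-1}{p+1}},
\]
which is finite precisely because $p>p_{\rm S}(\sigma)\ge 1+\sigma/N$ guarantees $2\sigma/(p-1)<N$. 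Rearranged, this gives
\[
\int_{B_R}|x|^\sigma u^{p+1}\,\di x\ \ge\ \left(\int_{B_R}|x|^{-\frac{2\sigma}{p-1}}\,\di x\right)^{-\frac{p-1}{2}}\left(\int_{B_R}u^2\,\di x\right)^{\frac{p+1}{2}},
\]
i.e. $\tfrac{p-1}{p+1}\int_{B_R}|x|^\sigma u^{p+1}\,\di x\ge c_*\,y(t)^{(p+1)/2}$ where $y(t):=\int_{B_R}u(x,t)^2\,\di x$.

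Next I would combine this with the second identity in \eqref{eq:I1}. Since $J$ is nonincreasing along the flow (first identity in \eqref{eq:I1}), we have $J[u(\cdot,t)]\le J[u_0]$ for all $t\ge0$. Therefore
\[
\tfrac12 y'(t)=-2J[u(\cdot,t)]+\tfrac{p-1}{p+1}\int_{B_R}|x|^\sigma u^{p+1}\,\di x\ \ge\ -2J[u_0]+c_*\,y(t)^{\frac{p+1}{2}}.
\]
Now the dichotomy: if at some time $t_1$ we had $c_*\,y(t_1)^{(p+1)/2}>2J[u_0]$, then $y'(t_1)>0$, and in fact $y'\ge c_*y^{(p+1)/2}$ once $c_*y^{(p+1)/2}\ge 4J[u_0]$, say; a standard ODE-blow-up comparison (using $p+1>2$) would force $y$ to blow up in finite time, contradicting global existence. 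Hence $c_*\,y(t)^{(p+1)/2}\le 2J[u_0]$ for all $t\ge0$, which is exactly \eqref{eq:AE1} after taking the $2/(p+1)$ power. One should be slightly careful to argue this cleanly—e.g. by showing $\{t:c_*y(t)^{(p+1)/2}\le 2J[u_0]\}$ cannot be left—but it is routine; note also this forces $J[u_0]\ge0$ for the estimate to be nonvacuous, which is automatic since otherwise the same argument already yields finite-time blow-up.

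For \eqref{eq:AE2} I would integrate the second identity in \eqref{eq:I1} over $[0,t]$:
\[
\tfrac{p-1}{p+1}\int_0^t\!\!\int_{B_R}|x|^\sigma u^{p+1}\,\di x\,\di s=\tfrac12\big(y(t)-y(0)\big)+2\int_0^t J[u(\cdot,s)]\,\di s\ \le\ \tfrac12 y(t)+2J[u_0]\,t,
\]
using $y(0)\ge0$ and $J[u(\cdot,s)]\le J[u_0]$ again. Inserting the bound \eqref{eq:AE1} for $y(t)$ and multiplying by $\tfrac{p+1}{p-1}$ gives \eqref{eq:AE2} verbatim. The main obstacle is really only the first step—pinning down the correct weighted interpolation inequality with the exact constant $c_*$ and verifying the integrability condition $2\sigma/(p-1)<N$ from $p>p_{\rm S}(\sigma)$—after which everything is a direct consequence of the energy identities \eqref{eq:I1} and an elementary ODE comparison.
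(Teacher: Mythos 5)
Your proof is correct and follows essentially the same route as the paper's: the same weighted H\"older inequality producing the constant $c_*$, the same ODE blow-up comparison built on the energy identities \eqref{eq:I1} to obtain \eqref{eq:AE1}, and the same time integration of the second identity to obtain \eqref{eq:AE2}. The only slip is in justifying the integrability of $|x|^{-2\sigma/(p-1)}$: one needs $p>1+2\sigma/N$, which indeed follows from $p>p_{\rm S}(\sigma)>1+2\sigma/N$ (as the paper notes), not from the weaker bound $p_{\rm S}(\sigma)\ge 1+\sigma/N$ that you cite.
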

\begin{proof}
First, we shall prove \eqref{eq:AE1}.
In terms of the monotonicity \eqref{eq:I1} of $J[u(\cdot, t)]$, it suffices to show
\begin{equation}
\label{eq:LemAE1}
\left(\int_{B_R} u (x,t)^2 \, \di x\right)^{\frac{p+1}{2}} \le \frac{2}{c_*}J[u(\cdot,t)] \quad \mbox{for all} \quad t\ge0. 
\end{equation}
Assume that there exists $\tau\ge0$ such that \eqref{eq:LemAE1} does not hold.
Set
\[
Y(t) := \int_{B_R} u(x,t)^2 \, \di x. 
\]
Noting that $p_S(\sigma) > 1+ 2\sigma/N$, by H\"{o}lder's inequality we have
\begin{equation*}
\frac{p-1}{p+1} \int_{B_R} |x|^\sigma u(x,t)^{p+1} \, \di x
\ge
c_* Y(t)^\frac{p+1}{2}.
\end{equation*}
This together with \eqref{eq:I1}  implies that
\begin{equation*}
\begin{split}
\frac{1}{2} Y'(t) 
&= -2 J[u(\cdot,t)] + c_* Y(t)^\frac{p+1}{2}
\ge -2 J[u(\cdot,\tau)] + c_* Y(t)^\frac{p+1}{2}
\end{split}
\end{equation*}
for $t\ge \tau$.
Setting
\[
h(Y) := -2 J[u(\cdot,\tau)] + c_* Y^\frac{p+1}{2} \quad \mbox{for} \quad Y>0,
\]
we see that $h$ is increasing for $Y>0$ and
$h(Y(\tau)) =  -2 J[u(\cdot,\tau)] + c_* Y(\tau)^{(p+1)/2}>0$.
Let $Y^*>0$ be such that $h(Y^*)=0$. Then we see that $Y(\tau) > Y^*$
and
\[
\int_{Y(\tau)}^\infty \frac{\di Y}{h(Y)} = 
\int_{Y(\tau)}^\infty \frac{\di Y}{-2 J[u(\cdot,\tau)] + c_* Y^\frac{p+1}{2}} < \infty.
\]
This implies that $Y(t)$ blows up in  finite time, a contradiction.
Thus, \eqref{eq:LemAE1} follows.

We shall prove \eqref{eq:AE2}.
Integrating the second identity in \eqref{eq:I1} over $(0,t)$, we get
\begin{equation*}
\begin{split}
&\frac{1}{2} \left[\int_{B_R} u(x,t)^2 \, \di x -\int_{B_R} u_0(x)^2\, \di x\right]
+ 2 \int_0^t J[u(\cdot,t)] \, \di t\\
&\qquad\qquad=
\frac{p-1}{p+1} \int_0^t\int_{B_R} |x|^\sigma u(x,t)^{p+1} \, \di x\di t.
\end{split}
\end{equation*}
Due to \eqref{eq:I1} and  \eqref{eq:AE1}, we have
\begin{equation*}
\begin{split}
\int_0^t \int_{B_R} |x|^\sigma u(x,t)^{p+1} \, \di x\di t
&\le \frac{p+1}{p-1} \left[ \frac{1}{2} \left[\frac{2}{c_*} J[u_0]\right]^\frac{2}{p+1}+ 2J[u_0] t\right]\\
\end{split}
\end{equation*}
for all $t>0$. This is the desired estimate \eqref{eq:AE2}.
Thus, the proof is complete.
\end{proof}

\subsection{Minimal $L^1$-continuation}
We set $f(x,u):=|x|^\sigma u^p$.
Note that $f_u(x,u)\ge0$ and $f(x,0)=0$.
Following \cite{MM09} and \cite{GS11}, we introduce the concept of 
minimal $L^1$-continuation of solutions to problem~\eqref{eq:Henon}.
\begin{definition}
The function $\tilde{u}$ is the minimal $L^1$-solution of problem~\eqref{eq:Henon} with initial data $u_0$ in the maximal existence time interval $[0,T)$, if there exists a sequence $\{\tilde{u}_{0,n}\}_{n=1}^\infty\subset C(\overline{B_R})$ with
\[
0\le \tilde{u}_{0,1} \le \tilde{u}_{0,2} \le \cdots \le  \tilde{u}_{0,n} \le \cdots \to u_0 \quad \mbox{in} \quad C(\overline{B_R})
\]
and $\tilde{u}_{0,n}\not\equiv u_0$ for all $n=1,2,\cdots$ such that the classical solution $\tilde{u}_n$ of problem~\eqref{eq:Henon} with initial data $\tilde{u}_{0,n}$
exists for all $t\in [0,T)$ and satisfies
\begin{equation}
\label{eq:L11}
\lim_{n\to \infty} \|\tilde{u}_n(\cdot,t) - \tilde{u}(\cdot,t)\|_{L^1(B_R)} = 0 \quad \mbox{for all} \quad t\in [0,T),
\end{equation}
and
\begin{equation}
\label{eq:L12}
\lim_{n\to \infty} \|f(\cdot, \tilde{u}_n(\cdot,t)) - f(\cdot, \tilde{u}(\cdot,t))\|_{L^1(B_R\times(0,t))} = 0 \quad \mbox{for all} \quad t\in [0,T).
\end{equation}
\end{definition}

Let $u(x, t; u_0)$ be a classical solution of the problem~\eqref{eq:Henon} which blows up in finite time $T>0$ and let $\tilde
{u}$ be the minimal solution $L^1$ with initial data $u_0$ in $[0,T^c)$  for some $T^c\ge T$.
The well-posedness of problem~\eqref{eq:Henon} implies that $\tilde{u}_n(\cdot,t)\to u(\cdot,t)$ 
for all $t\in [0,T)$.
We call $\tilde{u}$ as the minimal $L^1$-continuation of $u$. We say that the blow-up is complete if $T=T^c$ and is incomplete if $T<T^c$.
If $T^c=\infty$, we call $\tilde{u}$ as an $L^1$-global-in-time minimal continuation.

\begin{proposition}
\label{Proposition:A3}
Assume $N\ge3$, $p>p_{\rm S}(\sigma)$, and $\sigma>0$.
Let $g\in X\setminus\{0\}$.
Then  threshold solution $u^*(x,t)= u(x,t; \mu^* g)$ is bounded on $(0,\infty)$ in $L^1({B_R})$-norm and  unbounded on $(0,\infty)$ in $L^\infty$-norm.
\end{proposition}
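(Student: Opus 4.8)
The plan is to realize $\tilde u^*$, the minimal $L^1$-continuation of $u^*=u(\cdot,\cdot;\mu^*g)$, as a nondecreasing limit of genuine global-in-time solutions and to transfer the uniform a priori bounds of Lemma~\ref{Lemma:AE} to that limit; the $L^\infty$-unboundedness will then come from the dynamical-systems trichotomy already exploited in Section~3.

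First I would pick a sequence $\mu_n\nearrow\mu^*$ with $\mu_ng\in A$. This is possible: by the comparison principle $\mu'g\in A$ whenever $0<\mu'\le\mu$ and $\mu g\in A$, so $\{\mu>0:\mu g\in A\}$ is an interval with supremum $\mu^*$. Each $u_{\mu_n}:=u(\cdot,\cdot;\mu_ng)$ is then a nonnegative global-in-time solution, hence $J[\mu_ng]\ge0$ (a datum with negative energy would blow up in finite time) and $J[\mu_ng]\le J^*:=\sup_{0\le\mu\le\mu^*}J[\mu g]<\infty$ by continuity of $\mu\mapsto J[\mu g]$ on $[0,\mu^*]$. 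Lemma~\ref{Lemma:AE} therefore yields
\[
\sup_{n}\ \sup_{t\ge0}\int_{B_R}u_{\mu_n}(x,t)^2\,\di x\ \le\ \Bigl[\tfrac{2}{c_*}J^*\Bigr]^{2/(p+1)}\ =:\ C_0 ,
\]
together with the uniform bound $\int_0^t\!\int_{B_R}|x|^\sigma u_{\mu_n}^{p+1}\le\frac{p+1}{p-1}\bigl(\frac12 C_0+2J^*t\bigr)$ coming from \eqref{eq:AE2}. Since $g\ge0$ the $u_{\mu_n}$ are nondecreasing in $n$ and hence converge a.e.\ to some $\tilde u^*$; monotone convergence, the two bounds above, H\"older's inequality, and the integrability $\int_{B_R}|x|^\sigma\,\di x<\infty$ (recall $\sigma>0$) give \eqref{eq:L11} and \eqref{eq:L12}, so $\tilde u^*$ is the minimal $L^1$-continuation of $u^*$ and $T^c=\infty$. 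Passing the $L^2$ bound to the limit by Fatou's lemma and using the Cauchy--Schwarz inequality, we obtain $\|\tilde u^*(\cdot,t)\|_{L^1(B_R)}\le|B_R|^{1/2}C_0^{1/2}$ for all $t>0$, which is the asserted $L^1$ boundedness on $(0,\infty)$.

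For the $L^\infty$ statement I would argue by contradiction. If $\sup_{t>0}\|\tilde u^*(\cdot,t)\|_{L^\infty(B_R)}<\infty$, then $\|u^*(\cdot,t)\|_{L^\infty(B_R)}$ remains bounded on the maximal existence interval of $u^*$, so by the standard continuation criterion $u^*$ is a global bounded classical solution and $\tilde u^*=u^*$. Then the orbit is relatively compact in $C^2_{\rm loc}$ and $t\mapsto J[u^*(\cdot,t)]$ is a Lyapunov functional bounded below along it (cf.~\eqref{eq:I1}), so the standard argument forces $\omega(\mu^*g)$ to lie in the set of nonnegative steady states. Since $p>p_{\rm S}(\sigma)$ there are no positive steady states of problem~\eqref{eq:Henon} (see, e.g., \cite{Ni86}), whence $\omega(\mu^*g)=\{0\}$, i.e.\ $u^*(\cdot,t)\to0$ in $L^\infty(B_R)$ and $\mu^*g\in A$. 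As $A$ is relatively open in $X$ (Lemma~\ref{Lemma:GS11}), some $\mu>\mu^*$ satisfies $\mu g\in A$, contradicting the definition of $\mu^*$; this is precisely the reasoning that excludes case~(GB) in Proposition~\ref{Proposition:TSblowsup}.

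The only genuinely technical point is the first step: verifying that the nondecreasing limit of the $u_{\mu_n}$ satisfies the defining properties \eqref{eq:L11}--\eqref{eq:L12} of a minimal $L^1$-continuation, in particular the $L^1(B_R\times(0,t))$-convergence of the nonlinear terms $|x|^\sigma u_{\mu_n}^p$, which relies on the spacetime $L^{p+1}$ bound \eqref{eq:AE2} dominating a uniform tail together with the integrability of $|x|^\sigma$ near the origin. The $L^\infty$-unboundedness is comparatively soft, amounting to the trichotomy (GB)/(G)/(B) with case~(GB) ruled out as in Section~3.
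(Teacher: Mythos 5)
Your proposal is correct and follows essentially the same route as the paper: the $L^1$ bound comes from the uniform energy estimates of Lemma~\ref{Lemma:AE} applied to the global subthreshold solutions $u_\mu$, $\mu<\mu^*$, passed to the monotone limit via Fatou and Cauchy--Schwarz, and the $L^\infty$-unboundedness is the exclusion of case~(GB) by the Lyapunov/$\omega$-limit argument combined with the nonexistence of positive steady states for $p>p_{\rm S}(\sigma)$ and the openness of $A$. The only difference is cosmetic: you front-load the verification of \eqref{eq:L11}--\eqref{eq:L12}, which the paper defers to the subsequent proposition and which is not needed for the two claims of Proposition~\ref{Proposition:A3} itself.
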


\begin{proof}
The proof follows the arguments in \cite[Proposition~2]{GS11}.
Let $\mu \in (0,\mu^*)$ and denote $u_\mu(x,t) = u(x,t;\mu g)$.
We see from Proposition~\ref{Proposition:TSblowsup} that $u^*$ is unbounded on $(0,\infty)$ in $L^\infty$-norm in the case of $p_{\rm S}(\sigma)<p<p_{\rm JL}(\sigma)$.
In the case of $p\ge p_{\rm JL}(\sigma)$,
the same argument as in the proof of Proposition~\ref{Proposition:TSblowsup} shows that (GB) does not occur.
Hence, $u^*$ is unbounded on $(0,\infty)$ in $L^\infty$-norm.

 The comparison principle and the continuous dependence on initial values imply that $u_\mu$ is monotone increasing in $\mu$.
Hence, we see that
\[
u^*(x,t) = \lim_{\mu \nearrow \mu^*} u_\mu (x,t), \quad x\in B_R,\,\, t\in[0,T^*), 
\]
where $T^*\in (0,\infty]$ is the maximal existence time of $u^*$.
We shall show that $u^*$ is a minimal $L^1$-global solution.
By Fatou's lemma, H\"{o}lder's inequality, and \eqref{eq:AE1},
\begin{equation*}
\begin{split}
\int_{B_R} u^*(x,t) \, \di x 
&\le C \liminf_{\mu \nearrow \mu^*} \left(\int_{B_R} u_\mu(x,t)^2 \, \di x\right)^\frac{1}{2}
\le C\liminf_{\mu \nearrow \mu^*} J[\mu g]^\frac{1}{p+1}\\
\end{split}
\end{equation*}
for all $t>0$, where $C>0$ is independent of $t$. 
Since $J[\mu g]$ is bounded from above by 
$C(p, \mu^* )\| \nabla g \|_{L^2 (B_R )}$ for $\mu<\mu^*$, we obtain 
$\int_{B_R} u^*(x,t) \, \di x \le C (\mu^*)^{2/(p+1)}$.
Hence, $u^*= u_{\mu^*}$ exists globally in time as an $L^1$-solution.
The proof is complete.
\end{proof}

Finally, we shall show that  threshold solution $u^*$ is the minimal $L^1$-solution.
\begin{proposition}
Assume the same conditions as in Proposition~{\rm \ref{Proposition:A3}}. 
Then the threshold solution $u^*(x,t)= u(x,t; \mu^* g)$ is a 
minimal $L^1$-global-in-time solution.
\end{proposition}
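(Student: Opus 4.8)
The plan is to realize $u^*$ as the monotone increasing limit of the classical solutions $u(\cdot,\cdot;\mu g)$ as $\mu\nearrow\mu^*$, and then to verify the two defining limits \eqref{eq:L11}--\eqref{eq:L12} by the monotone convergence theorem. Fix $0<\mu_1<\mu_2<\cdots$ with $\mu_n\nearrow\mu^*$ and set $\tilde u_{0,n}:=\mu_n g$ and $\tilde u_n:=u(\cdot,\cdot;\mu_n g)$. By comparison, $\{\mu>0:\mu g\in A\}$ contains $(0,\mu^*)$ (if $\mu g\in A$ and $0<\mu'<\mu$ then $u_{\mu'}\le u_\mu\to 0$), so each $\tilde u_n$ is a nonnegative global-in-time classical solution; moreover $0\le\tilde u_{0,1}\le\tilde u_{0,2}\le\cdots\to\mu^* g$ in $C(\overline{B_R})$ and $\tilde u_{0,n}\not\equiv\mu^* g$ because $\mu_n<\mu^*$ and $g\not\equiv 0$. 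By the comparison principle and continuous dependence on data, $\tilde u_n$ is nondecreasing in $n$, and, as in the proof of Proposition~\ref{Proposition:A3}, $\tilde u_n(x,t)\nearrow u^*(x,t)$ for every $x\in B_R$ and $t\ge 0$, the limit $u^*$ being the (global, by Proposition~\ref{Proposition:A3}) $L^1$-solution; in particular the maximal existence interval of $u^*$ is $[0,\infty)$.

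The only quantitative step is the space-time integrability $\int_0^t\int_{B_R}|x|^\sigma u^*(x,s)^p\,\di x\di s<\infty$ for every $t>0$, which is what \eqref{eq:L12} ultimately requires. Since $\tilde u_n$ is a global solution, $0<J[\mu_n g]\le\tfrac12(\mu^*)^2\|\nabla g\|^2_{L^2(B_R)}$; combining \eqref{eq:AE2} of Lemma~\ref{Lemma:AE} with the elementary bound $\tau^p\le 1+\tau^{p+1}$ ($\tau\ge 0$) yields
\[
\int_0^t\int_{B_R}|x|^\sigma\tilde u_n^{p}\,\di x\di s\le R^\sigma|B_R|\,t+\int_0^t\int_{B_R}|x|^\sigma\tilde u_n^{p+1}\,\di x\di s\le C_t,
\]
with $C_t$ independent of $n$. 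Letting $n\to\infty$ and using monotone convergence ($|x|^\sigma\tilde u_n^p\nearrow|x|^\sigma(u^*)^p$) we obtain $\int_0^t\int_{B_R}|x|^\sigma(u^*)^p\,\di x\di s\le C_t<\infty$, i.e. $f(\cdot,u^*)=|\cdot|^\sigma(u^*)^p\in L^1(B_R\times(0,t))$ for all $t>0$.

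Granted these bounds, both limits follow from monotone convergence. For \eqref{eq:L11}: for fixed $t$, $0\le u^*(\cdot,t)-\tilde u_n(\cdot,t)\searrow 0$ pointwise and $u^*(\cdot,t)\in L^1(B_R)$ by Proposition~\ref{Proposition:A3}, so $\|\tilde u_n(\cdot,t)-u^*(\cdot,t)\|_{L^1(B_R)}\to 0$. For \eqref{eq:L12}: since $u\mapsto|x|^\sigma u^p$ is nondecreasing and $\tilde u_n\le u^*$, one has $|f(x,\tilde u_n)-f(x,u^*)|=|x|^\sigma(u^*)^p-|x|^\sigma\tilde u_n^p\searrow 0$ pointwise on $B_R\times(0,t)$, dominated by the integrable function $|x|^\sigma(u^*)^p$, so $\|f(\cdot,\tilde u_n)-f(\cdot,u^*)\|_{L^1(B_R\times(0,t))}\to 0$. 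Hence $\{\tilde u_{0,n}\}$ and $\{\tilde u_n\}$ witness that $u^*$ is a minimal $L^1$-global-in-time solution; that it is a continuation of the classical solution $u(\cdot,\cdot;\mu^* g)$ on $[0,T^*)$ follows from well-posedness, as recalled before Proposition~\ref{Proposition:A3}.

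The main obstacle is the integrability estimate $\int_0^t\int_{B_R}|x|^\sigma(u^*)^p<\infty$: Lemma~\ref{Lemma:AE} only controls the $(p+1)$-power, so one must pass to the $p$-power (here via $\tau^p\le 1+\tau^{p+1}$) and, crucially, use the uniform energy bound $J[\mu_n g]\le\tfrac12(\mu^*)^2\|\nabla g\|^2_{L^2(B_R)}$ so that $C_t$ does not depend on $n$. The rest is routine manipulation of monotone limits.
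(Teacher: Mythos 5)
Your proof is correct, and for the space--time estimates it follows essentially the same line as the paper: the same approximating family $\mu_n g\nearrow \mu^* g$, the same uniform bound $J[\mu_n g]\le \tfrac12(\mu^*)^2\|\nabla g\|_{L^2(B_R)}^2$, and Lemma~\ref{Lemma:AE} to control $\int_0^t\int_{B_R}|x|^\sigma \tilde u_n^{p+1}$ uniformly in $n$; you pass from the $(p+1)$-power to the $p$-power via $\tau^p\le 1+\tau^{p+1}$, whereas the paper uses H\"older's inequality with exponents $p+1$ and $(p+1)/p$ (both work, since $\sigma>0$ makes $\int_{B_R}|x|^\sigma\,\di x$ finite). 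Where you genuinely diverge is \eqref{eq:L11}: you obtain the fixed-time $L^1(B_R)$ convergence directly from monotone convergence together with the uniform $L^1$ bound of Proposition~\ref{Proposition:A3}, while the paper instead introduces the auxiliary linear problem \eqref{eq:AP}, solves it by the $L^1$-contraction semigroup, and identifies its solution $v$ with $u^*$. Your route is shorter and suffices to verify \eqref{eq:L11}--\eqref{eq:L12} as literally stated in the Definition; what the paper's detour buys is the additional fact that $u^*$ belongs to $C([0,t];L^1(B_R))$ and satisfies the variation-of-constants (mild) formulation --- i.e.\ that $u^*$ is an $L^1$-solution in the sense of Matano--Merle, which is implicit in the phrase ``minimal $L^1$-solution \dots in the maximal existence time interval.'' If one insists on that reading, your argument has a small but easily filled gap: pass to the limit $\mu\nearrow\mu^*$ in the identity $u_\mu(\cdot,t)=e^{t\Delta}\mu g+\int_0^t e^{(t-s)\Delta}|\cdot|^\sigma u_\mu(\cdot,s)^p\,\di s$ using monotone convergence of the kernels (legitimate precisely because you have already shown $|\cdot|^\sigma(u^*)^p\in L^1(B_R\times(0,t))$), which yields the integral identity for $u^*$ and its $L^1$-continuity in time.
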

\begin{proof}
Using the monotonicity of the sequence $u_\mu$ in $\mu$,
we deduce from the monotone convergence theorem, H\"{o}lder inequality, and \eqref{eq:AE1} that
\begin{equation*}
\begin{split}
\int_0^t \int_{B_R} u^*(x,s) \, \di x \di s 
&= \lim_{\mu \nearrow \mu^*}\int_0^t \int_{B_R}  u_\mu (x,s) \, \di x \di s \le C(\mu^*, g) t
\end{split}
\end{equation*}
for all $t>0$.
We thus obtain $\lim_{\mu \nearrow \mu^*}\|u_\mu - u^*\|_{L^1(B_R\times (0,t))} = 0$.
Similarly,
the monotone convergence theorem, H\"{o}lder inequality, and \eqref{eq:AE2} yield that
\begin{equation*}
\begin{split}
\int_0^t\int_{B_R} |x|^\sigma u^*(x,s)^p \, \di x \di s
\le C (1+t)^\frac{1}{p+1}\lim_{\mu \nearrow \mu^*} \left (\int_0^t\int_{B_R} |x|^\sigma u_\mu (x,s)^{p+1} \, \di x \di s \right)^\frac{p}{p+1}\\
\le C (1+t)^\frac{1}{p+1}\lim_{\mu \nearrow \mu^*}
\left[ J[\mu g]^\frac{2}{p+1} + J [\mu g]t\right]^\frac{p}{p+1}
\le C(\mu^*, g) (1+t)
\end{split}
\end{equation*}
for all $t>0$.
Therefore, $|\cdot|^\sigma (u^*)^p \in L^1 (B_R\times(0,t))$
for all $t>0$.
We thus obtain 
\[
\lim_{\mu \nearrow \mu^*} \||\cdot|^\sigma (u_\mu^p - (u^*)^p)\|_{L^1(B_R \times (0,t))} = 0 \quad \mbox{for all} \quad t>0.
\]
This is the condition \eqref{eq:L12}.

We shall check that $u^*$ satisfies the condition \eqref{eq:L11}.
In order to do that, we consider the following auxiliary problem
\begin{equation}
\label{eq:AP}
\left\{
\begin{array}{ll}
	\partial_t u  = \Delta u + |x|^{\sigma}(u^*)^p, \quad  &x\in B_R,\,\,\, t\in (0,\infty), \vspace{3pt}\\
	u = 0, \quad  &x\in  \partial B_R,\,\,\, t\in (0,\infty), \vspace{3pt}\\
        u(x,0) = \mu^*g(x), \quad  &x\in B_R. \vspace{3pt}\\
\end{array}
\right.
\end{equation}
By the standard theory of $L^1$-semigroup (see {\it e.g.}, \cite{QS19}), problem~\eqref{eq:AP} possesses an $L^1$-solution $v \in C([0,t]; L^1(B_R))$ provided that $|\cdot|^\sigma (u^*)^p \in L^1 (B_R\times(0,t))$.
Furthermore, it is an $L^1$-contracting mapping.
Then
\begin{equation*}
\begin{split}
&v(\cdot, t) =  e^{t\Delta}\mu^*g + \int_0^t e^{(t-s)\Delta} |\cdot|^\sigma u^*(\cdot, s)^{p} \, \di s,\\
&u_\mu(\cdot, t) =  e^{t\Delta}\mu g + \int_0^t e^{(t-s)\Delta} |\cdot|^\sigma u_\mu(\cdot, s)^{p} \, \di s.\\
\end{split}
\end{equation*}
Thus, for all $\mu\in (0,\mu^*)$, we have
\begin{equation*}
\begin{split}
&\|v(\cdot, t) - u_\mu(\cdot, t)\|_{L^1(B_R)} \\
&\le (\mu^*-\mu) \|e^{t\Delta} g\|_{L^1(B_R)} + \int_0^t \|e^{(t-s) \Delta}|\cdot|^\sigma(u^*(\dot, s)^p - u_\mu(\cdot, s)^p)\|_{L^1(B_R)} \, \di s\\
&\le  (\mu^*-\mu) \| g\|_{L^1(B_R)} 
+ \int_0^t \||\cdot|^\sigma(u^*(\cdot,s)^p - u_\mu(\cdot,s)^p)\|_{L^1(B_R)} \, \di s.
\end{split}
\end{equation*}
The dominated convergence theorem shows that 
the right-hand side of the above inequality tends to $0$ as $\mu \nearrow \mu^*$.
Hence, we have $u^* = v$. This implies that $u^*$ satisfies the condition \eqref{eq:L11}.
Since $v$ is an $L^1$-global-in-time solution, we conclude that $u^*$ is a minimal $L^1$-global-in-time solution.
The proof is complete.
\end{proof}


\bigskip
\noindent
{\bf Acknowledgement}\\
The authors would like to acknowledge K. Kumagai 
for providing the information on the transformation in the proof of Proposition~\ref{Proposition:SS}.
The work of Y. Seki was partly supported by Grant-in-Aid for scientific research (22K03387).\\

\noindent{\bf Conflict of interest}\\
The authors have no relevant financial or non-financial interests to disclose.\\

\noindent{\bf Data Availability}\\
Data sharing is not applicable to this article as no new data were created or analyzed in this study.

\begin{bibdiv}
\begin{biblist}	
\bib{CFL25}{article}{
   author={Castillo, Ricardo},
   author={Freire, Ricardo},
   author={Loayza, Miguel},
   title={Global existence versus blow-up for a Hardy--H\'enon parabolic
   equation on arbitrary domains},
   journal={J. Differential Equations},
   volume={429},
   date={2025},
   pages={427--459},
   issn={0022-0396},
}
\bib{CFG08}{article}{
   author={Chen, Xinfu},
   author={Fila, Marek},
   author={Guo, Jong-Shenq},
   title={Boundedness of global solutions of a supercritical parabolic
   equation},
   journal={Nonlinear Anal.},
   volume={68},
   date={2008},
   number={3},
   pages={621--628},
   issn={0362-546X},
}
\bib{CIT21}{article}{
   author={Chikami, Noboru},
   author={Ikeda, Masahiro},
   author={Taniguchi, Koichi},
   title={Well-posedness and global dynamics for the critical Hardy-Sobolev
   parabolic equation},
   journal={Nonlinearity},
   volume={34},
   date={2021},
   number={11},
   pages={8094--8142},
   issn={0951-7715},
}
\bib{CIT22}{article}{
   author={Chikami, Noboru},
   author={Ikeda, Masahiro},
   author={Taniguchi, Koichi},
   title={Optimal well-posedness and forward self-similar solution for the
   Hardy--H\'enon parabolic equation in critical weighted Lebesgue spaces},
   journal={Nonlinear Anal.},
   volume={222},
   date={2022},
   pages={Paper No. 112931, 28},
   issn={0362-546X},
}
\bib{CITT24}{article}{
   author={Chikami, Noboru},
   author={Ikeda, Masahiro},
   author={Taniguchi, Koichi},
   author={Tayachi, Slim},
   title={Unconditional uniqueness and non-uniqueness for Hardy--H\'enon
   parabolic equations},
   journal={Math. Ann.},
   volume={390},
   date={2024},
   number={3},
   pages={3765--3825},
   issn={0025-5831},
}
\bib{C17}{article}{
   author={Collot, Charles},
   title={Nonradial type II blow up for the energy-supercritical semilinear
   heat equation},
   journal={Anal. PDE},
   volume={10},
   date={2017},
   number={1},
   pages={127--252},
   issn={2157-5045},
}
\bib{DMW21}{article}{
   author={del Pino, Manuel},
   author={Musso, Monica},
   author={Wei, Juncheng},
   title={Geometry driven type II higher dimensional blow-up for the
   critical heat equation},
   journal={J. Funct. Anal.},
   volume={280},
   date={2021},
   number={1},
   pages={Paper No. 108788, 49},
   issn={0022-1236},
}
\bib{FT00}{article}{
   author={Filippas, Stathis},
   author={Tertikas, Achilles},
   title={On similarity solutions of a heat equation with a nonhomogeneous
   nonlinearity},
   journal={J. Differential Equations},
   volume={165},
   date={2000},
   number={2},
   pages={468--492},
   issn={0022-0396},
}
\bib{FK25}{article}{
   author={Fujishima, Yohei},
   author={Kan, Toru},
   title={Uniform boundedness and blow-up rate of solutions in
   non-scale-invariant superlinear heat equations},
   journal={J. Elliptic Parabol. Equ.},
   volume={11},
   date={2025},
   number={3},
   pages={2185--2217},
   issn={2296-9020},
}
\bib{GW24}{article}{
   author={Geng, Qiuping},
   author={Wang, Jun},
   title={Existence of sign-changing solutions for the H\'enon type
   parabolic equation with singular initial data},
   journal={Commun. Pure Appl. Anal.},
   volume={23},
   date={2024},
   number={2},
   pages={165--194},
   issn={1534-0392},
}
\bib{GS81}{article}{
 author={Gidas, B.},
 author={Spruck, J.},
 issn={0010-3640},
 issn={1097-0312},
 doi={10.1002/cpa.3160340406},
 review={Zbl 0465.35003},
 title={Global and local behavior of positive solutions of nonlinear elliptic equations},
 journal={Communications on Pure and Applied Mathematics},
 volume={34},
 pages={525--598},
 date={1981},
 publisher={Wiley, Hoboken, NJ},
}
\bib{GSU07}{article}{
   author={Giga, Yoshikazu},
   author={Seki, Yukihiro},
   author={Umeda, Noriaki},
   title={Blow-up at space infinity for nonlinear heat equations},
   conference={
      title={Recent advances in nonlinear analysis},
   },
   book={
      publisher={World Sci. Publ., Hackensack, NJ},
   },
   isbn={978-981-270-924-0},
   isbn={981-270-924-X},
   date={2008},
   pages={77--94},
}
\bib{GLS10}{article}{
 author={Guo, Jong-Shenq},
 author={Lin, Chang-Shou},
 author={Shimojo, Masahiko},
 issn={1056-2176},
 title={Blow-up behavior for a parabolic equation with spatially dependent coefficient},
 journal={Dynamic Systems and Applications},
 volume={19},
 number={3-4},
 pages={415--433},
 date={2010},
 publisher={Dynamic Publishers, Atlanta, GA},
}
\bib{GS11}{article}{
   author={Guo, Jong-Shenq},
   author={Shimojo, Masahiko},
   title={Blowing up at zero points of potential for an initial boundary
   value problem},
   journal={Commun. Pure Appl. Anal.},
   volume={10},
   date={2011},
   number={1},
   pages={161--177},
   issn={1534-0392},
}
\bib{GS18}{article}{
   author={Guo, Jong-Shenq},
   author={Souplet, Philippe},
   title={Excluding blowup at zero points of the potential by means of
   Liouville-type theorems},
   journal={J. Differential Equations},
   volume={265},
   date={2018},
   number={10},
   pages={4942--4964},
   issn={0022-0396},
}
\bib{HV94}{article}{
   author={Herrero, Miguel A.},
   author={Vel\'{a}zquez, Juan J. L.},
   title={Explosion de solutions d'\'{e}quations paraboliques
   semilin\'{e}aires supercritiques},
   language={French, with English and French summaries},
   journal={C. R. Acad. Sci. Paris S\'{e}r. I Math.},
   volume={319},
   date={1994},
   number={2},
   pages={141--145},
}
\bib{HI21}{article}{
   author={Hisa, Kotaro},
   author={Takahashi, Jin},
   title={Optimal singularities of initial data for solvability of the Hardy
   parabolic equation},
   journal={J. Differential Equations},
   volume={296},
   date={2021},
   pages={822--848},
   issn={0022-0396},
}
\bib{Korman2020}{article}{
   author={Korman, Philip},
   title={Infinitely many solutions and Morse index for non-autonomous
   elliptic problems},
   journal={Commun. Pure Appl. Anal.},
   volume={19},
   date={2020},
   number={1},
   pages={31--46},
   issn={1534-0392},
}
\bib{Li92}{article}{
   author={Li, Yi},
   title={Asymptotic behavior of positive solutions of equation $\Delta
   u+K(x)u^p=0$ in ${\bf R}^n$},
   journal={J. Differential Equations},
   volume={95},
   date={1992},
   number={2},
   pages={304--330},
   issn={0022-0396},
}
\bib{M07}{article}{
   author={Matano, Hiroshi},
   title={Blow-up in nonlinear heat equations with supercritical power
   nonlinearity},
   conference={
      title={Perspectives in nonlinear partial differential equations},
   },
   book={
      series={Contemp. Math.},
      volume={446},
      publisher={Amer. Math. Soc., Providence, RI},
   },
   isbn={978-0-8218-4190-7},
   date={2007},
   pages={385--412},
}
\bib{MM04}{article}{
   author={Matano, Hiroshi},
   author={Merle, Frank},
   title={On nonexistence of type II blowup for a supercritical nonlinear
   heat equation},
   journal={Comm. Pure Appl. Math.},
   volume={57},
   date={2004},
   number={11},
   pages={1494--1541},
   issn={0010-3640},
}
\bib{MM09}{article}{
   author={Matano, Hiroshi},
   author={Merle, Frank},
   title={Classification of type I and type II behaviors for a supercritical
   nonlinear heat equation},
   journal={J. Funct. Anal.},
   volume={256},
   date={2009},
   number={4},
   pages={992--1064},
   issn={0022-1236},
}
\bib{M11a}{article}{
   author={Mizoguchi, Noriko},
   title={Nonexistence of type II blowup solution for a semilinear heat
   equation},
   journal={J. Differential Equations},
   volume={250},
   date={2011},
   number={1},
   pages={26--32},
   issn={0022-0396},
}
\bib{M11b}{article}{
   author={Mizoguchi, Noriko},
   title={Blow-up rate of type II and the braid group theory},
   journal={Trans. Amer. Math. Soc.},
   volume={363},
   date={2011},
   number={3},
   pages={1419--1443},
   issn={0002-9947},
}
\bib{MS21}{article}{
   author={Mukai, Asato},
   author={Seki, Yukihiro},
   title={Refined construction of type II blow-up solution for semilinear
   heat equations with Joseph--Lundgren supercritical nonlinearity},
   journal={Discrete Contin. Dyn. Syst.},
   volume={41},
   date={2021},
   number={10},
   pages={4847--4885},
   issn={1078-0947},
}
\bib{Ni86}{article}{
 author={Ni, Wei-Ming},
 title={Uniqueness, nonuniqueness and related questions of noninear elliptic and parabolic equations},
 date={1986},
}
\bib{Phan13}{article}{
   author={Phan, Quoc Hung},
   title={Singularity and blow-up estimates via Liouville-type theorems for
   Hardy-H\'enon parabolic equations},
   journal={J. Evol. Equ.},
   volume={13},
   date={2013},
   number={2},
   pages={411--442},
   issn={1424-3199},
}
\bib{Phan17}{article}{
   author={Phan, Quoc Hung},
   title={Liouville-type theorems for nonlinear degenerate parabolic
   equation},
   journal={J. Evol. Equ.},
   volume={16},
   date={2016},
   number={3},
   pages={519--537},
   issn={1424-3199},
}
\bib{PQS2007}{article}{
   author={Pol\'{a}\v{c}ik, Peter},
   author={Quittner, Pavol},
   author={Souplet, Philippe},
   title={Singularity and decay estimates in superlinear problems via
   Liouville-type theorems. II. Parabolic equations},
   journal={Indiana Univ. Math. J.},
   volume={56},
   date={2007},
   number={2},
   pages={879--908},
   issn={0022-2518},
}
\bib{QS19}{book}{
   author={Quittner, Pavol},
   author={Souplet, Philippe},
   title={Superlinear parabolic problems},
   series={Birkh\"auser Advanced Texts: Basler Lehrb\"ucher. [Birkh\"auser
   Advanced Texts: Basel Textbooks]},
   edition={2},
   note={Blow-up, global existence and steady states},
   publisher={Birkh\"auser/Springer, Cham},
   date={2019},
   pages={xvi+725},
   isbn={978-3-030-18220-5},
   isbn={978-3-030-18222-9},
}
\bib{S18}{article}{
   author={Seki, Yukihiro},
   title={Type II blow-up mechanisms in a semilinear heat equation with
   critical Joseph-Lundgren exponent},
   journal={J. Funct. Anal.},
   volume={275},
   date={2018},
   number={12},
   pages={3380--3456},
   issn={0022-1236},
}
\bib{S20}{article}{
   author={Seki, Yukihiro},
   title={Type II blow-up mechanisms in a semilinear heat equation with
   Lepin exponent},
   journal={J. Differential Equations},
   volume={268},
   date={2020},
   number={3},
   pages={853--900},
   issn={0022-0396},
}
\bib{TW23}{article}{
   author={Tayachi, Slim},
   author={Weissler, Fred B.},
   title={New life-span results for the nonlinear heat equation},
   journal={J. Differential Equations},
   volume={373},
   date={2023},
   pages={564--625},
   issn={0022-0396},
}
\bib{Wang93}{article}{
   author={Wang, Xuefeng},
   title={On the Cauchy problem for reaction-diffusion equations},
   journal={Trans. Amer. Math. Soc.},
   volume={337},
   date={1993},
   number={2},
   pages={549--590},
   issn={0002-9947},
}
\bib{Yomgne22}{article}{
   author={Diebou Yomgne, Gael},
   title={On the generalized parabolic Hardy--H\'enon equation: existence,
   blow-up, self-similarity and large-time asymptotic behavior},
   journal={Differential Integral Equations},
   volume={35},
   date={2022},
   number={1-2},
   pages={57--88},
}
\bib{Zhang06}{article}{
   author={Zhang, Qi S.},
   title={Some gradient estimates for the heat equation on domains and for
   an equation by Perelman},
   journal={Int. Math. Res. Not.},
   date={2006},
   pages={Art. ID 92314, 39},
   issn={1073-7928},
}
\end{biblist}
\end{bibdiv}

\end{document}